\newtheorem{theorem}{Theorem}[section]
\newtheorem{counterexample}{Counterexample}[section]
\newtheorem{example}{Example}[section]
\newtheorem{corollary}{Corollary}[section]
\newtheorem{definition}{Definition}[section]
\newtheorem{lemma}{Lemma}[section]
\newtheorem{remark}{Remark}[section]
\begin{document}
\title{Some new ordering results on stochastic comparisons of second largest order statistics  from independent and interdependent heterogeneous distributions}
\author{{\large { Sangita {\bf Das}\thanks {Email address :
                sangitadas118@gmail.com}~ and Suchandan {\bf
                Kayal}\thanks {Email address (corresponding author):
                kayals@nitrkl.ac.in,~suchandan.kayal@gmail.com}}} \\
    { \em \small {\it Department of Mathematics, National Institute of
            Technology Rourkela, Rourkela-769008, India.}}}
\date{}
\maketitle
\begin{center}{\bf Abstract}
\end{center}
The second-largest order statistic is of special importance in reliability theory since it represents the time to failure of a $2$-out-of-$n$ system. Consider two $2$-out-of-$n$ systems with heterogeneous random lifetimes. The lifetimes are assumed to follow heterogeneous general exponentiated location-scale models. In this communication, the usual stochastic and reversed hazard rate orders between the systems' lifetimes are established under two cases. For the case of independent random lifetimes, the usual stochastic order and the reversed hazard rate order between the second-largest order statistics are obtained by using the concept of vector majorization and related orders. For the dependent case, the conditions under which the usual stochastic order between the second-largest order statistics holds are investigated. To illustrate the theoretical findings, some special cases of the exponentiated location-scale model are considered.
\\
\\
\noindent{\bf Keywords:} Second-largest order statistics; $2$-out-of-$n$ systems; stochastic order; reversed hazard rate order; majorization;  Archimedean copula.
\\\\
{\bf Mathematics Subject Classification:} 60E15; 90B25.

\section{Introduction}
Stochastic orders are hugely popular mathematical tools, which have seen successful applications in various areas of research. Ideally, they are suited to model relationships of various characteristics across multiple heterogeneous samples. Specifically, in probability theory, the stochastic orderings are useful in comparing stochastic models and establishing probability inequalities. In economics, particularly in utility theory, they are useful to make decisions under risk. In reliability theory, the stochastic orders are utilized to derive reliability bounds. Various aging notions such as the new better than used and new worse than used are understood using the concept of stochastic orderings.  They are also used in redundancy improvements and maintenance policies. Let us consider replacements upon failures. This is a very common in maintenance management. In this policy, the number of replacements, denoted by $N(t)$ in a time interval $(0,t)$ is of great importance in reliability theory, particularly its probability distribution. However, explicit formula for the cumulative distribution function of $N(t)$ is not available in general except some special cases. So, the stochastic bounds of the distribution function of $N(t)$ are useful from the practical point of view.

Order statistics play a vital role in various areas of probability and statistics. It has many useful interpretations in reliability theory, auction theory and in various other applied fields of research. Let $X_1,\cdots,X_n$ be $n$ random lifetimes. The $k$th order statistic ($k=1,\cdots,n$) is the $k$th smallest observation. It is denoted by $X_{k:n}.$ There are many important systems, we often face in reliability theory. One of these is the $k$-out-of-$n$ system, which is of huge importance. The mechanism of this system is that it works, if at least $k$ components out of $n$ operate. The order statistic $X_{n-k+1:n}$ characterizes the time to failure of the $k$-out-of-$n$ system. In particular, $k$-out-of-$n$ system reduces to the parallel and series systems if $k=1$ and $n$, respectively. Note that there have been substantial work on the stochastic comparison of the order statistics when the components' lifetimes have heterogeneous probability models. In the next paragraph, we present few recent developments.

\cite{khaledi2011stochastic} considered stochastic comparisons of the order statistics in the scale models. \cite{balakrishnan2013ordering} derived different ordering results between different order statistics according to the hazard rate, likelihood ratio, dispersive order, excess wealth order for the proportional hazard rate model. \cite{kochar2015stochastic} studied stochastic comparison of the largest order statistics, constructed from two sets of heterogeneous scaled-samples in terms of the likelihood ratio order. \cite{li2016stochastic} studied ordering properties of the extreme order statistics arising from scaled dependent samples. They obtained usual stochastic order, star order and dispersive order of the sample extremes.
\cite{bashkar2017stochastic} studied effect of heterogeneity on the order statistics arising from independent heterogeneous exponentiated scale samples. They used usual stochastic, reversed hazard rate and likelihood ratio orderings as the mathematical tools to compare order statistics. Further, in the presence of the Archimedean copula or survival copula for the random variables, they obtained the usual stochastic order of the sample extremes. \cite{torrado2017stochastic} addressed the problem of stochastic comparisons of the extreme order statistics from two sets of heterogeneous scale models. The author obtained various stochastic orderings when a set of parameters majorizes another set of parameters.  For the location-scale distributed samples, \cite{hazra2017stochastic} obtained various stochastic ordering results between the maximum order statistics. \cite{hazra2018stochastic} considered independent heterogeneous location-scale models and obtained stochastic comparisons between the minimum order statistics in terms of various stochastic orders. \cite{fanggamma2019} developed comparison results between the lifetimes of series and parallel systems with heterogeneous
exponentiated gamma components. Very recently, exponentiated location-scale model, a generalized version of the location-scale model was considered by \cite{dasordering}. They obtained various ordering results between the extreme order statistics from independent heterogeneous exponentiated location-scale models. \cite{das2019ordering} studied comparison results between the extreme order statistics arising from heterogeneous dependent exponentiated location-scale models.

Note that $2$-out-of-$n$ system is a special case of the general $k$-out-of-$n$ system. For some comparison results between $k$-out-of-$n$ systems, one may refer to \cite{ding2013comparisons} and \cite{balakrishnan2018stochastic}. Recently, few researchers have studied stochastic comparison results between the second-largest order statistics. For instance, see \cite{fang2016stochastic} and \cite{balakrishnan2018necessary}. To the best of our knowledge, nobody has considered the stochastic comparison study between $k$-out-of-$n$ systems or $2$-out-of-$n$ systems, when the components' lifetimes follow exponentiated location-scale model. In this contribution, we consider $2$-out-of-$n$ systems. Here, the lifetimes of the components of the systems follow heterogeneous exponentiated location-scale models. Note that the $(n-1)$st order statistic represents the lifetime of a $2$-out-of-$n$ system. We will first compare the lifetimes of two $2$-out-of-$n$ systems in the sense of the usual stochastic order and the reversed hazard rate order when the components' lifetimes are heterogeneous and independently distributed. Then, we obtain comparison results for the heterogeneous dependent random lifetimes. It has been assumed that the dependence structure is coupled by the Archimedean copulas. Few supplementary results in addition to the main results are also presented.   The $i$th random variable $X_{i}$, where $i=1,\cdots,n$ is said to follow exponentiated location-scale model with baseline distribution function $F_{b}$, if its cumulative distribution function is given by
\begin{eqnarray}\label{eq1.1}
F_i(x)\equiv
F_{X_i}(x;\lambda_i,\theta_i,\alpha_i,F_b)=\left[F_b\left(\frac{x-\lambda_i}{\theta_i}\right)\right]^{\alpha_i},~x>\lambda_i>0,~\alpha_i,~\theta_i>0.
\end{eqnarray}
In many practical applications such as in economy and medical study, the datasets are often skewed. In economics, the amount of gains is often small and the large losses are occasional. In this case, the dataset is (negatively) skewed. To capture skewness contained in the dataset, the skewness parameter $\alpha_i$ plays an important role.
Further, when considering lifetime models, the location parameter, here $\lambda_i$ represents a lower threshold of the lifetimes. Sometimes, it also represents guarantee time of an item. In hydrology and environmental science, the location parameter is used as a threshold parameter. It corresponds a minimum threshold value of the observed characteristic. Thus, the stochastic comparison results obtained in this paper not only have applications in reliability theory, but also in other applied fields of research.

The rest of the paper is laid out as follows. In Section $2$, we present some key definitions of the stochastic orders, majorization and related orders. The concept of copula and few well known lemmas are also provided. Section $3$ addresses main contribution of the paper. This section has two subsections. In Subsection $3.1$, it is shown that the usual stochastic and reversed hazard rate orders exist between the lifetimes of two $2$-out-of-$n$ systems under majorization-based sufficient conditions. Here, the components' lifetimes are taken to be heterogeneous and independent. The case of  heterogeneous but dependent components' lifetimes is considered in Subsection $3.2$. Here, sufficient conditions, under which the usual stochastic order between the lifetimes of two $2$-out-of-$n$ systems holds are derived. Examples and counterexamples associated to the established results are presented throughout. Section $4$ concludes the paper.

 Throughout the communication, the random variables are assumed to be absolutely continuous and nonnegative. The terms increasing and decreasing are used in wide sense. Prime denotes the derivative of a function.

\section{Background\setcounter{equation}{0}}

This section briefly reviews some of the basic concepts of stochastic orderings, majorization orderings, preliminary lemmas and copulas. These are essential to establish main results, which have been presented in the subsequent section. First, we consider the notion of stochastic orderings.
\subsection{Stochastic orderings}
Consider two nonnegative and absolutely continuous random variables $X$ and $Y$. Denote the probability density functions, cumulative distribution functions, survival functions and reversed hazard rate functions of $X$ and $Y$ by $f_{X}$ and $f_{Y}$, $F_{X}$ and $F_{Y}$, $\bar
F_{X}=1-F_{X}$ and $\bar F_{Y}=1-F_{Y}$  and  $ \tilde r_{X}=f_{X}/
F_{X}$ and $\tilde r_{Y}=f_{Y}/
F_{Y}$, respectively.
\begin{definition}
	$X$ is said to be smaller than $Y$ in the
	\begin{itemize}
		\item reversed hazard rate order (denoted by $X\leq_{rh}Y$)
		if $\tilde{r}_{X}(x)\leq \tilde{r}_{Y}(x)$, for all $x>0$;
		\item usual stochastic order (denoted by $X\leq_{st}Y$) if
		$\bar F_{X}(x)\leq\bar F_{Y}(x)$, for all $x$.
	\end{itemize}
\end{definition}
Note that the reversed hazard rate ordering implies the usual stochastic ordering. One may refer to \cite{shaked2007stochastic} for details on stochastic orderings and their applications in various contexts. Next, we consider the concept of the majorization and some associated orders.

\subsection{Majorization and related orders}
The notion of majorization plays a vital role in establishing various inequalities in the field of applied probability. Suppose we have two vectors of same dimension. Then, majorization is useful to compare these vectors in terms of the dispersion of their components. Let $\mathbb{A} \subset \mathbb{R}^{n}$, where $\mathbb{R}^{n}$ be
an $n$-dimensional Euclidean space. Denote $n$ dimensional vectors by $\boldsymbol{x} =
\left(x_{1},\cdots,x_{n}\right)$ and $\boldsymbol{y} =
\left(y_{1},\cdots,y_{n}\right)$ taken from  $\mathbb{A}.$ Further, the order coordinates of the
vectors $\boldsymbol{x}$ and $\boldsymbol{y}$ are denoted by  $x_{1:n}\leq \cdots \leq x_{n:n}$ and
$y_{1:n}\leq\cdots \leq y_{n:n},$  respectively.
\begin{definition}\label{definition2.2}
	A vector $\boldsymbol{x}$ is said to be
	\begin{itemize}
		\item  majorized by another vector $\boldsymbol{y},$ (denoted
		by $\boldsymbol{x}\preceq^{m} \boldsymbol{y}$), if for each $k=1,\cdots,n-1$, we have
		$\sum\limits_{i=1}^{k}x_{i:n}\geq \sum\limits\limits_{i=1}^{k}y_{i:n}$ and
		$\sum\limits\limits_{i=1}^{n}x_{i:n}=\sum\limits_{i=1}^{n}y_{i:n};$
		\item weakly submajorized by another vector $\boldsymbol{y},$ denoted
		by $\boldsymbol{x}\preceq_{w} \boldsymbol{y}$, if for each $k=1,\cdots,n$, we have
		$\sum\limits_{i=k}^{n}x_{i:n}\leq \sum\limits_{i=k}^{n}y_{i:n};$
		\item weakly supermajorized by  another vector $\boldsymbol{y},$ denoted
		by $\boldsymbol{x}\preceq^{w} \boldsymbol{y}$, if for each $k=1,\cdots,n$, we have
		$\sum\limits_{i=1}^{k}x_{i:n}\geq \sum\limits_{i=1}^{k}y_{i:n};$
		\item reciprocally majorized by another vector $\boldsymbol{y},$ denoted
		by $\boldsymbol{x}\preceq^{rm}\boldsymbol{y}$, if $\sum\limits_{i=1}^{k}x_{i:n}^{-1}\le
		\sum\limits_{i=1}^{k}y_{i:n}^{-1}$, for all $k=1,\ldots,n$.
	\end{itemize}
\end{definition}
 It is noted that $\bm{y}$ majorizes $\bm{x}$ means the components of $\bm{y}$ are more dispersed than that of  $\bm{x}$ under the condition that the sum is fixed. One can easily prove that the majorization order implies both weakly supermajorization and weakly submajorization orders. Interested readers are referred to \cite{marshall2010} for an extensive and comprehensive details on the theory of majorization and its applications in the field of statistics. Next, we consider definition of the Schur-convex and Schur-concave functions.
\begin{definition}
	A function $h:\mathbb{R}^n\rightarrow \mathbb{R}$ is said
	to be Schur-convex (Schur-concave) on $\mathbb{R}^n$ if
	$$\boldsymbol {x}\overset{m}{\succeq}\boldsymbol{ y}\Rightarrow
	h(\boldsymbol { x})\geq( \leq)h(\boldsymbol { y}), \text{ for all } \boldsymbol { x},~ \boldsymbol
	{ y} \in \mathbb{R}^n.$$
\end{definition}

The following notations will be used throughout the article.
	$(i)~\mathcal{D}_{+}=\{(x_1,\cdots,x_n):x_{1}\geq
	x_{2}\geq\cdots\geq x_{n}>0\}$, $(ii)~\mathcal{E}_{+}=\{(x_1,\cdots,x_n):0<x_{1}\leq
	x_{2}\leq\cdots\leq x_{n}\}$ and $(iii)~\bm{1}_{n}=(1,\cdots,1).$
The following lemmas are helpful to establish the results in Section $3$. Denote by $h_{(k)}(\boldsymbol{ z})=\partial h(\boldsymbol{z})/\partial z_k$ the partial derivative of $h$ with respect to its $k$th argument
{\begin{lemma}(\cite{kundu2016some})\label{lem2.1a}
		Let $h:\mathcal{D_+}\rightarrow \mathbb{ R}$ be a function, continuously differentiable on the interior of $\mathcal{D_+}.$ Then, for $\boldsymbol{x},~\boldsymbol{y}\in\mathcal{D_+},$
		$$\boldsymbol{x}\succeq^{m}\boldsymbol{y}\text{ implies } h(\boldsymbol{x})\geq(\leq )~h(\boldsymbol{y}),$$
		if and only if
		$h_{(k)}(\boldsymbol{ z}) \text{ is decreasing (increasing) in } k=1,\cdots,n.$
	\end{lemma}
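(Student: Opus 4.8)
The plan is to read this as a Schur--Ostrowski type criterion adapted to the ordered cone $\mathcal{D}_+$ (where the coordinates are already sorted, so no symmetry hypothesis on $h$ is needed) and to prove the two implications separately: sufficiency of the monotonicity condition on the partial derivatives by an interpolation-plus-Abel-summation argument, and its necessity by testing $h$ against infinitesimal Robin--Hood transfers. I would only treat the case in which $h_{(k)}$ is decreasing in $k$ (so that the assertion is $h(\bm x)\ge h(\bm y)$); the ``increasing/$\le$'' case follows verbatim after replacing $h$ by $-h$.

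For sufficiency, assume $h_{(k)}(\bm z)$ is decreasing in $k$ on the interior of $\mathcal{D}_+$ and take $\bm x,\bm y\in\mathcal{D}_+$ with $\bm x\succeq^{m}\bm y$. First I would note that the segment $\bm z(t)=(1-t)\bm x+t\bm y$, $t\in[0,1]$, stays in $\mathcal{D}_+$, since the defining inequalities $z_1\ge\cdots\ge z_n>0$ are preserved under convex combinations; moreover, for every $k$ the partial sum $\sum_{i=1}^{k}z_i(t)$ is a convex combination of $\sum_{i=1}^{k}x_i$ and $\sum_{i=1}^{k}y_i$, hence lies between them, so $\bm x\succeq^{m}\bm z(t)\succeq^{m}\bm y$ for all $t$. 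Differentiating, $\frac{d}{dt}h(\bm z(t))=\sum_{k=1}^{n}h_{(k)}(\bm z(t))(y_k-x_k)$. Setting $S_k:=\sum_{i=1}^{k}(y_i-x_i)$, one has $S_0=S_n=0$ and $S_k\le 0$ for $1\le k\le n-1$ by the majorization $\bm x\succeq^{m}\bm y$. Summation by parts then yields $\frac{d}{dt}h(\bm z(t))=\sum_{k=1}^{n-1}\bigl(h_{(k)}(\bm z(t))-h_{(k+1)}(\bm z(t))\bigr)S_k\le 0$, because each bracketed difference is nonnegative by the monotonicity of $h_{(k)}$ and each $S_k\le 0$. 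Hence $h$ is nonincreasing along the segment, so $h(\bm x)=h(\bm z(0))\ge h(\bm z(1))=h(\bm y)$.

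For necessity, fix an interior point $\bm z^{0}\in\mathcal{D}_+$ and an index $k\in\{1,\dots,n-1\}$, and consider the perturbation $\bm z(\varepsilon)=\bm z^{0}+\varepsilon(\bm e_k-\bm e_{k+1})$. For all sufficiently small $\varepsilon>0$ this still lies in the interior of $\mathcal{D}_+$, and its partial sums agree with those of $\bm z^{0}$ except at level $k$, where the sum is larger by $\varepsilon$; thus $\bm z(\varepsilon)\succeq^{m}\bm z^{0}$. By the assumed implication, $h(\bm z(\varepsilon))\ge h(\bm z^{0})=h(\bm z(0))$, so the one-sided derivative at $\varepsilon=0^{+}$ is nonnegative, i.e. $h_{(k)}(\bm z^{0})-h_{(k+1)}(\bm z^{0})\ge 0$. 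Since $\bm z^{0}$ and $k$ were arbitrary, $h_{(k)}$ is decreasing in $k$.

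The one genuinely delicate point is that $h$ is assumed continuously differentiable only on the \emph{interior} of $\mathcal{D}_+$, whereas in the sufficiency step the connecting segment can touch the boundary when $\bm x$ or $\bm y$ has equal or vanishing coordinates; I would handle this by first proving the inequality for interior pairs $\bm x,\bm y$ and then extending to the boundary by approximating boundary points from within $\mathcal{D}_+$ (along paths respecting the majorization) and passing to the limit by continuity. Beyond that, everything is the chain rule together with the Abel-summation identity; the substantive idea is simply to pair the sign of the consecutive differences $h_{(k)}-h_{(k+1)}$ with the sign of the majorization partial sums $S_k$.
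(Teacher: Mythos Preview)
Your argument is correct and is precisely the Schur--Ostrowski criterion specialised to the ordered cone $\mathcal{D}_+$: the interpolation along the convex segment together with Abel summation handles sufficiency, and the infinitesimal transfer $\bm e_k-\bm e_{k+1}$ handles necessity. The boundary caveat you flag is the right one, and your approximation-from-the-interior patch is the standard fix.

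There is nothing to compare against in the paper itself: this lemma is quoted without proof from \cite{kundu2016some}, so the paper offers no independent argument. Your write-up is in fact the classical proof one finds (in various guises) in Marshall--Olkin--Arnold and in the Kundu et al.\ source, so in that sense you are not taking a different route but rather supplying the omitted details.
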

	\begin{lemma}(\cite{kundu2016some})\label{lem2.1b}
		Let $h:\mathcal{E_+}\rightarrow \mathbb{ R}$ be a function, continuously differentiable on the interior of $\mathcal{E_+}.$ Then, for $\boldsymbol{x},~\boldsymbol{y}\in\mathcal{E_+},$
		$$\boldsymbol{x}\succeq^{m}\boldsymbol{y}\text{ implies } h(\boldsymbol{x})\geq(\leq )~h(\boldsymbol{y}),$$
		if and only if
		$h_{(k)}(\boldsymbol{ z}) \text{ is increasing (decreasing) in } k=1,\cdots,n.$
	\end{lemma}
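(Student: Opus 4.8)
The plan is to obtain Lemma \ref{lem2.1b} from Lemma \ref{lem2.1a} by a coordinate-reversal, rather than redoing the differential analysis from scratch. Let $R:\mathbb{R}^n\to\mathbb{R}^n$ be the reversal map $R(z_1,\dots,z_n)=(z_n,z_{n-1},\dots,z_1)$. Then $R$ is an involution that maps $\mathcal{E_+}$ bijectively onto $\mathcal{D_+}$, and carries the interior $\{0<x_1<\cdots<x_n\}$ of $\mathcal{E_+}$ onto the interior $\{x_1>\cdots>x_n>0\}$ of $\mathcal{D_+}$, since reversing an increasing string of positive numbers produces a decreasing one. Define $g:\mathcal{D_+}\to\mathbb{R}$ by $g(\boldsymbol{w})=h(R(\boldsymbol{w}))$; because $h$ is continuously differentiable on the interior of $\mathcal{E_+}$, the composition $g$ is continuously differentiable on the interior of $\mathcal{D_+}$, so Lemma \ref{lem2.1a} applies to $g$.

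First I would record the chain-rule identity $g_{(k)}(\boldsymbol{w})=h_{(n-k+1)}(R(\boldsymbol{w}))$ for $k=1,\dots,n$, which just says that the variable in slot $k$ of $g$ sits in slot $n-k+1$ of $h$. Hence, for a fixed $\boldsymbol{w}$, equivalently a fixed $\boldsymbol{z}=R(\boldsymbol{w})\in\mathcal{E_+}$, the sequence $k\mapsto g_{(k)}(\boldsymbol{w})$ is decreasing (increasing) in $k$ if and only if the sequence $j\mapsto h_{(j)}(\boldsymbol{z})$ is increasing (decreasing) in $j$, upon substituting $j=n-k+1$. Thus the differential condition in Lemma \ref{lem2.1a} for $g$ translates exactly into the differential condition claimed in Lemma \ref{lem2.1b} for $h$.

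Next I would use the fact that the majorization preorder $\preceq^{m}$ depends only on the ordered coordinates of its arguments, so it is invariant under applying one and the same permutation to both entries; in particular, for $\boldsymbol{x},\boldsymbol{y}\in\mathcal{E_+}$ we have $\boldsymbol{x}\succeq^{m}\boldsymbol{y}$ if and only if $R(\boldsymbol{x})\succeq^{m}R(\boldsymbol{y})$ in $\mathcal{D_+}$. Since also $h(\boldsymbol{x})=g(R(\boldsymbol{x}))$ and $h(\boldsymbol{y})=g(R(\boldsymbol{y}))$, the statement ``$\boldsymbol{x}\succeq^{m}\boldsymbol{y}\Rightarrow h(\boldsymbol{x})\ge(\le)h(\boldsymbol{y})$ for all $\boldsymbol{x},\boldsymbol{y}\in\mathcal{E_+}$'' is equivalent to ``$\boldsymbol{w}\succeq^{m}\boldsymbol{w}'\Rightarrow g(\boldsymbol{w})\ge(\le)g(\boldsymbol{w}')$ for all $\boldsymbol{w},\boldsymbol{w}'\in\mathcal{D_+}$''. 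By Lemma \ref{lem2.1a} the latter holds iff $g_{(k)}(\boldsymbol{z})$ is decreasing (increasing) in $k$, which by the previous paragraph is precisely the condition that $h_{(k)}(\boldsymbol{z})$ is increasing (decreasing) in $k$. This establishes both directions at once.

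The argument is essentially bookkeeping, so there is no deep obstacle; the only point requiring care is keeping the index reversal $j\leftrightarrow n-k+1$ consistent and checking that $R$ sends the (relative) interior of $\mathcal{E_+}$ onto that of $\mathcal{D_+}$, which is what lets Lemma \ref{lem2.1a} be invoked verbatim. A fully self-contained alternative would be to imitate the proof of Lemma \ref{lem2.1a} directly on $\mathcal{E_+}$: reduce to the case where $\boldsymbol{x}$ and $\boldsymbol{y}$ differ in only two coordinates by a finite chain of transfer (Robin Hood) steps that remain in $\mathcal{E_+}$, then on each step express $h(\boldsymbol{x})-h(\boldsymbol{y})$ as an integral of $h_{(i)}-h_{(j)}$ along the transfer path and determine its sign from the monotonicity of $h_{(k)}$ in $k$; but the reversal reduction is shorter and reuses the already-stated lemma.
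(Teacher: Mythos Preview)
Your reduction of Lemma~\ref{lem2.1b} to Lemma~\ref{lem2.1a} via the coordinate-reversal map $R$ is correct: the chain-rule identity $g_{(k)}(\boldsymbol{w})=h_{(n-k+1)}(R(\boldsymbol{w}))$ and the permutation-invariance of $\succeq^{m}$ are exactly what is needed, and the bookkeeping is clean. There is nothing to compare against, however, because the paper does not supply a proof of this lemma; it is simply quoted from \cite{kundu2016some} as a known result and used as a tool in the subsequent theorems. Your argument is a legitimate way to derive it from the companion Lemma~\ref{lem2.1a}, and the alternative you sketch (a direct transfer-step argument on $\mathcal{E_+}$) would also work.
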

	\begin{lemma}(\cite{hazra2017stochastic})\label{lem2.1c}
		Let $S\subseteq\mathbb{ R}^n_{+}.$
		Further, let $h:S\rightarrow \mathbb{ R}$ be a function. Then, for $\boldsymbol{x},~\boldsymbol{y}\in S,$
		$$\boldsymbol{x}\succeq^{rm}\boldsymbol{y}\text{ implies } h(\boldsymbol{x})\geq(\leq )~h(\boldsymbol{y}),$$
		if and only if
		\begin{itemize}
			\item[(i)] $ h(\frac{1}{a_1},\cdots,\frac{1}{a_n})$ is Schur-convex (Schur-concave) in $(a_1,\cdots,a_n)\in S,$
			\item[(ii)] $ h(\frac{1}{a_1},\cdots,\frac{1}{a_n})$ is increasing (decreasing) in $a_i$, for $i=1,\cdots,n,$
			where $a_i=\frac{1}{x_i},$ for $i=1,\cdots,n.$
		\end{itemize}
	\end{lemma}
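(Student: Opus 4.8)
The plan is to reduce the assertion to a classical characterization of the functions that preserve the weak submajorization order. The starting point is the observation that reciprocal majorization is exactly weak submajorization of the coordinatewise reciprocals. Indeed, writing $\boldsymbol{x}^{-1}=(1/x_1,\dots,1/x_n)$ and noting that, since all entries are positive, the $k$ smallest entries of $\boldsymbol{x}$ are precisely the $k$ largest entries of $\boldsymbol{x}^{-1}$, Definition~\ref{definition2.2} gives immediately that $\boldsymbol{x}\succeq^{rm}\boldsymbol{y}$ holds if and only if $\boldsymbol{x}^{-1}\succeq_{w}\boldsymbol{y}^{-1}$. Setting $g(\boldsymbol{a})=h(1/a_1,\dots,1/a_n)$, so that $h(\boldsymbol{x})=g(\boldsymbol{x}^{-1})$ and $h(\boldsymbol{y})=g(\boldsymbol{y}^{-1})$, the implication ``$\boldsymbol{x}\succeq^{rm}\boldsymbol{y}\Rightarrow h(\boldsymbol{x})\ge(\le)\,h(\boldsymbol{y})$'' becomes equivalent to ``$\boldsymbol{a}\succeq_{w}\boldsymbol{b}\Rightarrow g(\boldsymbol{a})\ge(\le)\,g(\boldsymbol{b})$'' for $\boldsymbol{a},\boldsymbol{b}$ ranging over the image of $S$ under the reciprocal map.

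It then suffices to invoke (and, if one wishes, reprove) the standard fact from majorization theory that a function $g$ satisfies ``$\boldsymbol{a}\succeq_{w}\boldsymbol{b}\Rightarrow g(\boldsymbol{a})\ge g(\boldsymbol{b})$'' if and only if $g$ is increasing in each of its arguments and Schur-convex, and correspondingly that ``$\boldsymbol{a}\succeq_{w}\boldsymbol{b}\Rightarrow g(\boldsymbol{a})\le g(\boldsymbol{b})$'' holds if and only if $g$ is decreasing in each argument and Schur-concave. For the sufficiency of these conditions one uses the interpolation lemma: if $\boldsymbol{a}\succeq_{w}\boldsymbol{b}$, then there is a vector $\boldsymbol{c}$ with $\boldsymbol{c}\ge\boldsymbol{b}$ componentwise and $\boldsymbol{a}\succeq^{m}\boldsymbol{c}$, whence Schur-convexity of $g$ yields $g(\boldsymbol{a})\ge g(\boldsymbol{c})$ and monotonicity yields $g(\boldsymbol{c})\ge g(\boldsymbol{b})$. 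For the necessity, Schur-convexity is forced because $\boldsymbol{a}\succeq^{m}\boldsymbol{b}$ implies $\boldsymbol{a}\succeq_{w}\boldsymbol{b}$, while monotonicity is forced by testing the implication on pairs $\boldsymbol{a}\ge\boldsymbol{b}$, which always satisfy $\boldsymbol{a}\succeq_{w}\boldsymbol{b}$. Undoing the substitution $a_i=1/x_i$ turns ``$g$ increasing and Schur-convex'' into conditions (i)--(ii) of the statement, and the Schur-concave half of the claim is obtained by applying the same argument to $-h$.

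The point that requires care, and what I expect to be the main obstacle, is the domain. Both the interpolation step in the sufficiency argument and the perturbation arguments in the necessity argument need the auxiliary vectors (the vector $\boldsymbol{c}$, and the perturbations of $\boldsymbol{b}$) to remain inside $S$; this is automatic when the reciprocal image of $S$ is permutation-symmetric and stable under the relevant componentwise increases and Robin--Hood transfers, as it is for $S=\mathbb{R}^n_{+}$, and, after restricting attention to the appropriate cone, for $S=\mathcal{D}_{+}$ or $S=\mathcal{E}_{+}$, where Schur-convexity of $g$ may moreover be checked via monotonicity of its partial derivatives using Lemmas~\ref{lem2.1a} and~\ref{lem2.1b}. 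I would therefore establish the equivalence first for $S=\mathbb{R}^n_{+}$ and then record the general statement with the understanding that $S$ is assumed to carry enough structure for these reductions to go through.
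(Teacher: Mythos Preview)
The paper does not prove this lemma at all: it is stated with attribution to \cite{hazra2017stochastic} and used without justification, so there is no ``paper's own proof'' to compare against. Your proposal is therefore strictly more than what the paper provides.

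That said, your argument is the standard and correct one. The key observation that $\boldsymbol{x}\succeq^{rm}\boldsymbol{y}$ is equivalent to $\boldsymbol{x}^{-1}\succeq_{w}\boldsymbol{y}^{-1}$ is exactly right (since taking reciprocals of positive entries reverses the order, the $k$ smallest coordinates of $\boldsymbol{x}$ correspond to the $k$ largest of $\boldsymbol{x}^{-1}$), and after the change of variable $g(\boldsymbol{a})=h(1/a_1,\dots,1/a_n)$ the statement becomes the classical characterization of functions preserving weak submajorization as those that are coordinatewise increasing and Schur-convex (Theorem~A.8 and the surrounding material in \cite{marshall2010}). Your sketches of both the sufficiency (via the interpolation $\boldsymbol{a}\succeq^{m}\boldsymbol{c}\ge\boldsymbol{b}$) and the necessity (testing on majorized pairs and on componentwise-ordered pairs) are accurate. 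Your caveat about the domain $S$ is also well taken and is, in fact, a genuine subtlety that the lemma as stated in the paper glosses over; in the paper's applications $S$ is always $\mathcal{D}_+$ or $\mathcal{E}_+$, where the reductions go through.
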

	\subsection{Copula}
	Let $\bm{X}=(X_1,\cdots,X_{n})$ be a nonnegative random vector. The univariate marginal distribution functions of $X_{1},\cdots,X_{n}$ are $F_1,\cdots,F_n$, respectively. The univariate survival functions of $X_{1},\cdots,X_{n}$ are respectively denoted by $\bar{F}_1,\cdots,\bar{F}_n$. Denote $\boldsymbol{v}=(v_1,\cdots,v_n)$. $C(\boldsymbol{v})$ and $\hat{C}(\boldsymbol{v})$ are called the  copula and survival copula of $\boldsymbol{X}$, if there exist functions $C(\boldsymbol{v}):[0,1]^n\rightarrow [0,1]$ and
	$\hat {C}(\boldsymbol{v}):[0,1]^n\rightarrow [0,1]$ such that for all $ x_i,~i\in \mathcal I_n, $ where $\mathcal I_n$ be the index set,
	$$ F(x_1,\cdots,x_n)=C(F_1(x_1),\cdots,F_n(x_n))~\mbox{and}~
	\bar{F}(x_1,\cdots,x_n)=\hat{C}(\bar{F_1}(x_1),\cdots,\bar{F_n}(x_n))$$ hold.
	Consider $\psi:[0,\infty)\rightarrow[0,1]$ to be a nonincreasing and continuous function such that $\psi(0)=1$ and $\psi(\infty)=0.$ Further, let $\psi$ satisfy $(-1)^i{\psi}^{i}(x)\geq 0,~ i=0,1,\cdots,d-2$ and  $(-1)^{d-2}{\psi}^{d-2}$ be nonincreasing and convex. Then, the generator $\psi$ is $d$-monotone. Furthermore, define, $\phi={\psi}^{-1}=\text{sup}\{x\in \mathcal R:\psi(x)>v\}$, the right continuous inverse of $\psi$. Then, a copula $C_{\psi}$ with generator $\psi$ is called Archimedean copula if $$C_{\psi}(v_1,\cdots,v_n)=\psi({\psi^{-1}(v_1)},\cdots,\psi^{-1}(v_n)),~\text{ for all } v_i\in[0,1],~i\in\mathcal{I}_n.$$ Interested readers may refer to \cite{nelsen2007} and \cite{mcneil2009multivariate} for more details on Archimedean copulas.
	
\section{Comparison results\setcounter{equation}{0}}
This section deals with the stochastic comparisons of the lifetimes of two $2$-out-of-$n$ systems with respect to the usual stochastic and reversed hazard rate orderings in the exponentiated location-scale models. It has been mentioned before that the second-largest order statistic represents the lifetime of a $2$-out-of-$n$ system. Thus, this problem is equivalent to comparing the second-largest order statistics arising from two sets of nonnegative random lifetimes. The random lifetimes can be independent or dependent. Firstly, consider the case of independent lifetimes.
\subsection{Independent lifetimes}
The random vector $\bm{X}=(X_{1},\cdots,X_{n})$ follows exponentiated location-scale model if the cumulative distribution function of the $i$th random variable $X_{i}$ is given by (\ref{eq1.1}), $i=1,\cdots,n.$ For convenience, we denote $\bm{X}\sim \mathbb{ELS}(
\bm{\lambda},\bm{\theta},\bm{\alpha};F_{b})$, where $F_{b}$ is the baseline distribution function, $\bm{\lambda}=(\lambda_1,\cdots,\lambda_n)$, $\bm{\theta}=
(\theta_1,\cdots,\theta_n)$ and $\bm{\alpha}=(\alpha_1,\cdots,\alpha_n)$. Denote by $\bm{Y}$ another random vector such that $\bm{Y}\sim \mathbb{ELS}(
\bm{\mu},\bm{\delta},\bm{\beta};F_{b})$, where $\bm{\mu}=(\mu_1,\cdots,\mu_n)$,
$\bm{\delta}=(\delta_1,\cdots,\delta_n)$ and $\bm{\beta}=(\beta_1,\cdots,\beta_n)$. Keeping some possible applications to the reliability theory in our mind, one can assume that the random vector $\bm{X}$ describes the random lifetimes of $n$ components of a $2$-out-of-$n$ system. Similarly, for the random vector $\bm{Y}$. Note that the cumulative distribution functions of $X_{n-1:n}$ and  $Y_{n-1:n}$ are respectively given by (see, \cite{mesfioui2017stochastic})
\begin{eqnarray}\label{eq3.1}
F_{X_{n-1:n}}(x)=\sum\limits_{l=1}^{n}\left[\prod_{k\neq l}^{n}\left\{\left[F_{b}\left(\frac{x-\lambda_k}{\theta_k}\right)\right]^{\alpha_k}\right\}\right]-(n-1)\prod_{k=1}^{n}\left\{\left[F_{b}\left(\frac{x-\lambda_k}{\theta_k}\right)\right]^{\alpha_k}\right\},
\end{eqnarray}
where
$x>\max\{\lambda_k,~\forall ~k\}$ and
\begin{eqnarray}
G_{Y_{n-1:n}}(x)=\sum\limits_{l=1}^{n}\left[\prod_{k\neq l}^{n}\left\{\left[F_{b}\left(\frac{x-\mu_k}{\delta_k}\right)\right]^{\beta_k}\right\}\right]-(n-1)\prod_{k=1}^{n}\left\{\left[F_{b}\left(\frac{x-\mu_k}{\delta_k}\right)\right]^{\beta_k}\right\},
\end{eqnarray}
where $x>\max\{\mu_k,~\forall ~k\}$. Now, we are ready to present our main results. In the first theorem, we obtain conditions, under which the second-largest order statistics $X_{n-1:n}$ and $Y_{n-1:n}$ are comparable in the usual stochastic order. We refer to  \cite{belzunce1998}  and \cite{oliveira2015} for similar monotonicity conditions. The location parameters and the shape parameters are taken equal and fixed. Specifically, the following theorem states that the weak supermajorized scale parameter vector yields a $2$-out-of-$n$ system with larger reliability.
\begin{theorem}\label{th3.1}
	For $\boldsymbol{X}\sim \mathbb{ELS}(
	\boldsymbol{\lambda},\boldsymbol{\theta},\boldsymbol{\alpha};F_{b})$ and $\boldsymbol{Y}\sim \mathbb{ELS}(
	\boldsymbol{\mu},\boldsymbol{\delta},\boldsymbol{\beta};F_{b}),$ with $\boldsymbol{\lambda}=\boldsymbol{\mu}=\lambda\boldsymbol{1}_{n}$ and $\boldsymbol{\alpha}=\boldsymbol{\beta}=\alpha\boldsymbol{1}_{n}$, if  $ \boldsymbol{\theta},~\boldsymbol{\delta}\in\mathcal{E_+}~( or~\mathcal{D_+})$ and $w^2 \tilde{r}_{b}(w)$ is increasing in $w>0$, then ${\boldsymbol\theta}\succeq^{w}{{\boldsymbol\delta}}\Rightarrow X_{n-1:n}\leq_{st}Y_{n-1:n}$.
\end{theorem}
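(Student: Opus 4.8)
The plan is to translate $X_{n-1:n}\leq_{st}Y_{n-1:n}$ into the pointwise inequality $F_{X_{n-1:n}}(x)\ge G_{Y_{n-1:n}}(x)$ for every $x$. For $x\le\lambda$ both sides are $0$, so fix $x>\lambda$ and set $w=x-\lambda>0$. Since the two distribution functions share the functional form in (\ref{eq3.1}) with common shape $\alpha$ and common location $\lambda$, I would introduce, for a scale vector $\boldsymbol{t}\in\mathbb{R}^{n}_{+}$,
\[
\Psi(\boldsymbol{t})=\sum_{l=1}^{n}\prod_{k\neq l}\Big[F_b\Big(\frac{w}{t_k}\Big)\Big]^{\alpha}-(n-1)\prod_{k=1}^{n}\Big[F_b\Big(\frac{w}{t_k}\Big)\Big]^{\alpha},
\]
so that $F_{X_{n-1:n}}(x)=\Psi(\boldsymbol{\theta})$ and $G_{Y_{n-1:n}}(x)=\Psi(\boldsymbol{\delta})$, and note that $\Psi$ is permutation-symmetric. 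The goal then reduces to the implication $\boldsymbol{\theta}\succeq^{w}\boldsymbol{\delta}\Rightarrow\Psi(\boldsymbol{\theta})\ge\Psi(\boldsymbol{\delta})$.

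Because only majorization-type tools (Lemmas \ref{lem2.1a}--\ref{lem2.1b}) are available, I would first split the weak supermajorization into a plain majorization together with a componentwise inequality. Assuming $\boldsymbol{\theta},\boldsymbol{\delta}\in\mathcal{E_+}$ with $\boldsymbol{\delta}\preceq^{w}\boldsymbol{\theta}$, the standard decomposition gives a vector $\boldsymbol{u}$ with $\boldsymbol{u}\preceq^{m}\boldsymbol{\theta}$ and $u_i\le\delta_i$ for all $i$; rearranging $\boldsymbol{u}$ increasingly to $\boldsymbol{\xi}$ preserves $\boldsymbol{\xi}\preceq^{m}\boldsymbol{\theta}$, keeps $\boldsymbol{\xi}\in\mathcal{E_+}$ (positivity is automatic, since $\boldsymbol{\xi}$ is majorized by the positive vector $\boldsymbol{\theta}$), and still satisfies $\xi_i\le\delta_i$ coordinatewise by a short sorting argument using that $\boldsymbol{\delta}$ is already increasing. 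It then suffices to prove two properties of $\Psi$ on $\mathcal{E_+}$ (and, symmetrically, on $\mathcal{D_+}$): (a) $\Psi$ is decreasing in each argument, which yields $\Psi(\boldsymbol{\xi})\ge\Psi(\boldsymbol{\delta})$; and (b) $\Psi_{(k)}(\boldsymbol{z})$ is increasing in $k$ on $\mathcal{E_+}$ (decreasing in $k$ on $\mathcal{D_+}$), so that Lemma \ref{lem2.1b} (resp. Lemma \ref{lem2.1a}) gives $\Psi(\boldsymbol{\theta})\ge\Psi(\boldsymbol{\xi})$; chaining the two inequalities finishes the proof.

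Both (a) and (b) come from a single derivative computation. Writing $G_k=F_b(w/t_k)\in(0,1)$ and $\tilde r_b=f_b/F_b$, one obtains
\[
\Psi_{(j)}(\boldsymbol{t})=-\frac{\alpha}{w}\Big(\prod_{k=1}^{n}G_k^{\alpha}\Big)\Big(\frac{w}{t_j}\Big)^{2}\tilde r_b\Big(\frac{w}{t_j}\Big)\sum_{l\neq j}\big(G_l^{-\alpha}-1\big),
\]
and every factor on the right is (strictly) positive because each $G_l<1$, which gives (a). For (b) on $\mathcal{E_+}$: as $j$ increases, $w/t_j$ decreases, hence $(w/t_j)^{2}\tilde r_b(w/t_j)$ decreases \emph{precisely because} $w\mapsto w^{2}\tilde r_b(w)$ is assumed increasing; and $G_j=F_b(w/t_j)$ decreases (as $F_b$ is increasing), so $\sum_{l\neq j}(G_l^{-\alpha}-1)$ decreases as well. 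A product of two nonnegative decreasing sequences is decreasing, so $j\mapsto\Psi_{(j)}(\boldsymbol{t})$ is increasing, which is exactly the hypothesis of Lemma \ref{lem2.1b}; on $\mathcal{D_+}$ all of these monotonicities reverse and Lemma \ref{lem2.1a} applies instead. The main obstacle is casting $\Psi_{(j)}$ in the displayed product form and recognizing that the assumption ``$w^{2}\tilde r_b(w)$ increasing'' is exactly what makes the middle factor monotone in the index $j$; a secondary technical point is verifying that the rearrangement in the weak-supermajorization reduction keeps the vector in $\mathcal{E_+}$ (or $\mathcal{D_+}$) together with the coordinatewise bound by $\boldsymbol{\delta}$.
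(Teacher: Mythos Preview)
Your proof is correct and follows essentially the same route as the paper: both arguments show that $\Psi(\boldsymbol{\theta})=F_{X_{n-1:n}}(x)$ is decreasing in each coordinate and Schur-convex on $\mathcal{E}_{+}$ (respectively $\mathcal{D}_{+}$), using the hypothesis that $w^{2}\tilde r_{b}(w)$ is increasing to control the index-monotonicity of the partial derivatives. The only differences are cosmetic: the paper cites Theorem~A.8 of \cite{marshall2010} directly for the passage from ``decreasing $+$ Schur-convex'' to preservation of weak supermajorization, whereas you rederive that step by the classical decomposition $\boldsymbol{\delta}\preceq^{w}\boldsymbol{\theta}\Leftrightarrow\exists\,\boldsymbol{\xi}$ with $\boldsymbol{\xi}\preceq^{m}\boldsymbol{\theta}$ and $\boldsymbol{\xi}\le\boldsymbol{\delta}$; and you factor the derivative one step further (pulling out $\prod_{k}G_{k}^{\alpha}$), which lets you read off monotonicity in $j$ from two nonnegative decreasing factors, while the paper instead computes the difference $\Psi_{(i)}-\Psi_{(j)}$ explicitly.
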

\begin{proof}
	We only provide the proof of the case when $ \boldsymbol{\theta},~\boldsymbol{\delta}\in\mathcal{E_+}$. The other case can be finished in a similar manner. To prove the result, denote $\Psi_1\left({\boldsymbol \theta}\right)=F_{X_{n-1:n}}(x)$, where $F_{X_{n-1:n}}(x)$ is obtained from (\ref{eq3.1}). The partial derivative of $\Psi_1\left({\boldsymbol \theta}\right)$ with respect to $\theta_i$, for $i=1,\cdots,n$ is
	\begin{align}\label{eq3.3}
	\frac{\partial\Psi_1\left({\boldsymbol \theta}\right)}{\partial \theta_i}=-\left[\frac{\alpha[w^2{\tilde{r}_{b}}(w)]_{w=\left(\frac{x-\lambda}{\theta_i}\right)}}{x-\lambda}\right]\left[\sum\limits_{\overset{l=1}{l\neq i}}^{n}\prod_{k\neq l}^{n}d_{k}-(n-1)\prod_{k=1}^{n}d_{k}\right],
	\end{align}
	where $d_{k}=[F_{b}(\frac{x-\lambda}{\theta_k})]^{\alpha}$, for $k=1,\cdots,n.$ Now, using Lemma \ref{lem2.1b}, it is enough to show that  $\Psi_1(\boldsymbol{\theta})$ is decreasing and  Schur-convex with respect to $\boldsymbol{\theta}\in \mathcal{E_+}.$ Let
	$1\leq i\leq j \leq n.$ Then, $\theta_i\leq\theta_j$. As a result, we get
	$(\frac{x-\lambda}{\theta_i})\geq(\frac{x-\lambda}{\theta_j})$ and  $[F_{b}(\frac{x-\lambda}{\theta_i})]^{\alpha}\geq[F_{b}(\frac{x-\lambda}{\theta_j})]^{\alpha}.$  Note that $\Psi_1(\boldsymbol{\theta})$ is decreasing and  Schur-convex with respect to $\boldsymbol{\theta}\in \mathcal{E_+}$ is equivalent to show that $\frac{\partial\Psi_1\left({\boldsymbol \theta}\right)}{\partial \theta_i}$ given by (\ref{eq3.3}) is negative and increasing with respect to $\theta_i,$ for $i=1,\cdots,n.$
	It is easy to check that $\frac{\partial\Psi_1\left({\boldsymbol \theta}\right)}{\partial \theta_i}\leq 0,$ since
	\begin{eqnarray}\label{eq3.4}
	\prod_{k\neq l}^{n} d_{k}\geq \prod_{k= 1}^{n}d_{k}\Rightarrow \sum\limits_{\overset{l=1}{l\neq i}}^{n}\prod_{k\neq l}^{n}d_{k}-(n-1)\prod_{k=1}^{n}d_{k}\geq 0.
	\end{eqnarray}
	 Now, we will show that $\frac{\partial\Psi_1\left({\boldsymbol \theta}\right)}{\partial\theta_i}$ is increasing in $\theta_i.$
	From the assumption that $w^2 \tilde{r}_{b}(w)$ is increasing, we obtain
	\begin{equation}\label{eq3.5}
	\Big[w^2 \tilde{r}_{b}(w)\Big]_{w=\left(\frac{x-\lambda}{\theta_i}\right)}\geq\Big[w^2 \tilde{r}_{b}(w)\Big]_{w=\left(\frac{x-\lambda}{\theta_j}\right)}.
	\end{equation}
	Further,
	\begin{align}\label{eq3.6}
	\sum\limits_{\overset{l=1}{l\neq i}}^{n}\prod_{k\neq l}^{n}d_{k}-\sum\limits_{\overset{l=1}{l\neq j}}^{n}\prod_{k\neq l}^{n}d_{k}
	&=\prod_{k\neq \{i,j\}}^{n}d_{k}\left[\left[F_{b}\left(\frac{x-\lambda}{\theta_i}\right)\right]^{\alpha}-\left[F_{b}\left(\frac{x-\lambda}{\theta_j}\right)\right]^{\alpha}\right]\nonumber\\
	&\geq 0.
	\end{align}
	Utilizing (\ref{eq3.5}) and (\ref{eq3.6}), it can be shown that $\frac{\partial\Psi_1\left({\boldsymbol \theta}\right)}{\partial\theta_i}-\frac{\partial\Psi_1\left({\boldsymbol \theta}\right)}{\partial\theta_j}$ is at most zero. Thus,
	the required result follows by Theorem $A.8$ of \cite{marshall2010}. This completes the proof of the theorem.
\end{proof}
The example given below demonstrates Theorem \ref{th3.1}.
\begin{example}\label{exe3.1}
	Consider two vectors $\boldsymbol{X}\sim \mathbb{ELS}(4,(5,9,10),4;(\frac{x}{100})^{0.2})$ and
$\boldsymbol{Y}\sim \mathbb{ELS}(4,$
$(7,10,12),4;(\frac{x}{100})^{0.2})$, where $0<x\le 100.$ For the assumed baseline distribution, $w^2 \tilde{r}_{b}(w)$ is increasing. Clearly, all the conditions of Theorem \ref{th3.1} are satisfied. Now,
we plot the graphs of ${F}_{X_{2:3}}(x)$ and ${F}_{Y_{2:3}}(x)$ in Figure $1a$.  It is noticed that the plot of  ${F}_{X_{2:3}}(x)$ is above the plot of ${F}_{Y_{2:3}}(x)$ for all $x$, which validates the result  in Theorem \ref{th3.1}.

\end{example}

\begin{figure}[h]
	\begin{center}
		\subfigure[]{\label{c2}\includegraphics[height=2.41in]{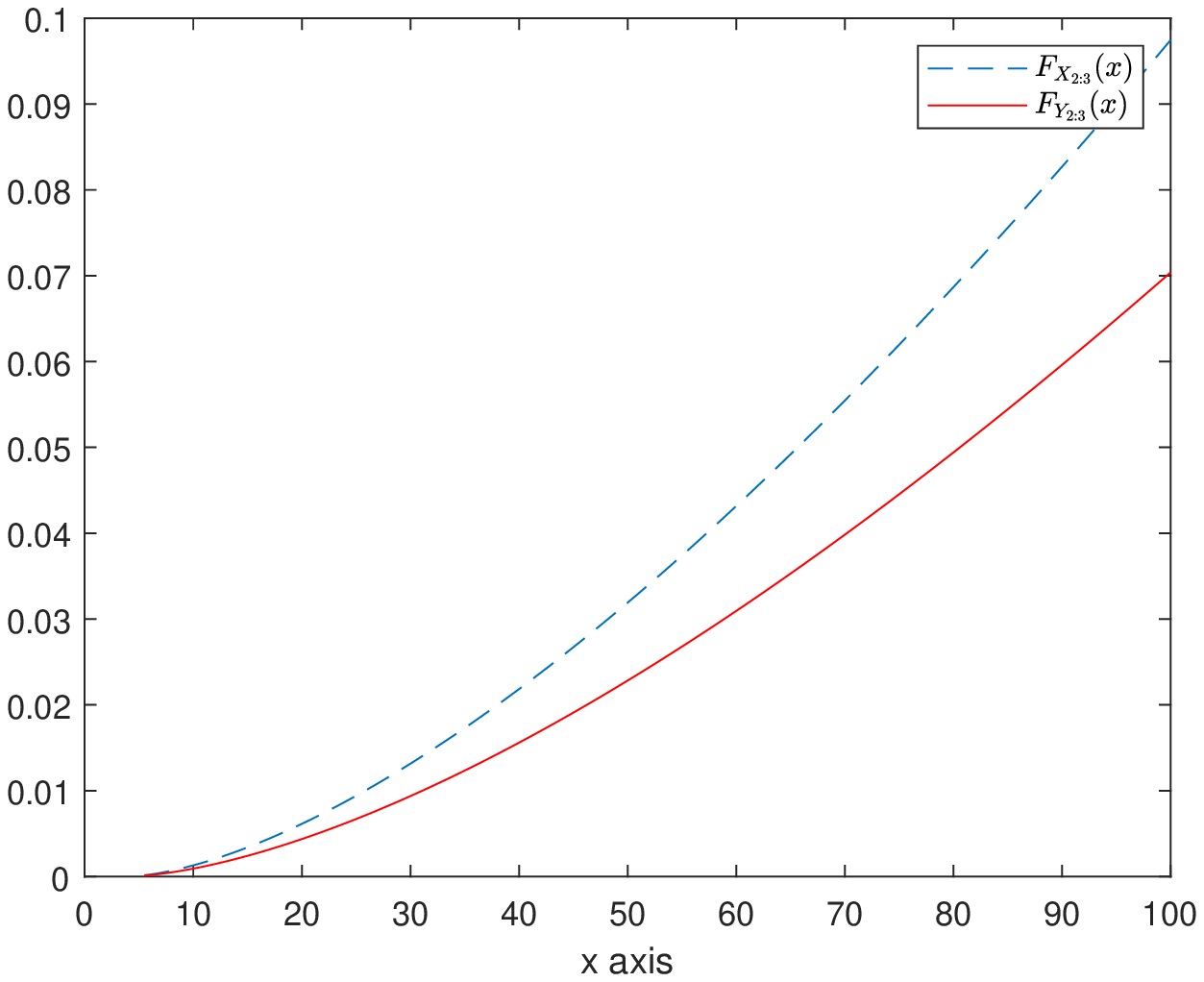}}
		\subfigure[]{\label{c1}\includegraphics[height=2.41in]{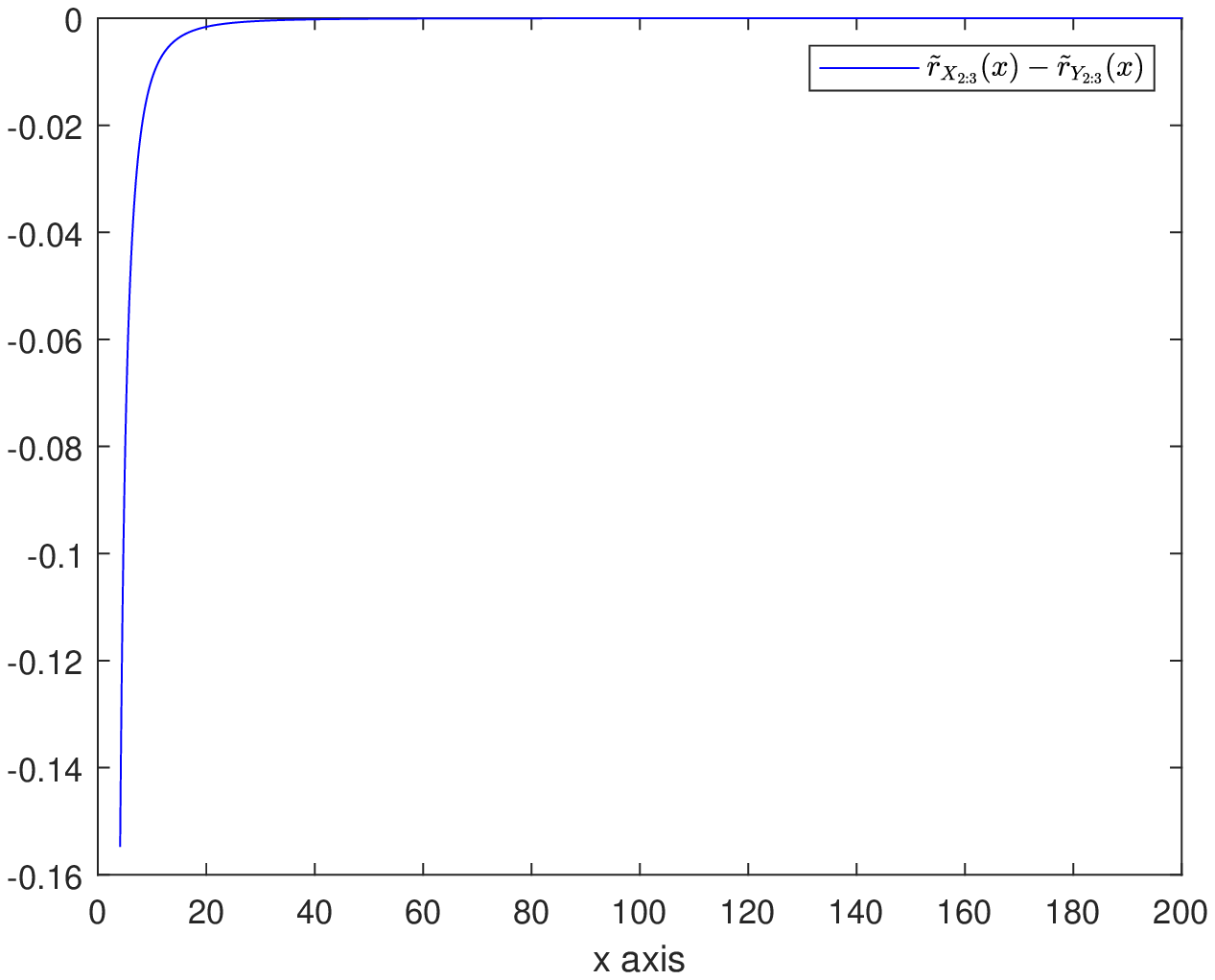}}
		\caption{
			(a) Plots of ${F}_{X_{2:3}}(x)$ and ${F}_{Y_{2:3}}(x)$ as in Example \ref{exe3.1}.
			 (b) Plot of $[\tilde{r}_{X_{2:3}}(x)-\tilde{r}_{Y_{2:3}}(x)]$ as in Example \ref{exe3.2}.
		}
	\end{center}
\end{figure}

Besides the baseline distribution as in Example \ref{exe3.1}, there is another distribution with cumulative distribution function $F_{b}(x)=\frac{x}{1+x},~x>0$, for which $w^2 \tilde{r}_{b}(w)$ is increasing. We have $(\theta_1,\cdots,\theta_n)\succeq^{w} (\theta,\cdots,\theta)$, for $n\theta\ge \sum_{i=1}^{n}\theta_i$. Using this fact, the following corollary immediately follows from Theorem \ref{th3.1}. This result is also useful to get bound of the time to failure of a $2$-out-of-$n$ system with heterogeneous components in terms of that with homogeneous components.
\begin{corollary}\label{cor3.1}
	Let  $\boldsymbol{X}\sim \mathbb{ELS}(
	{\lambda},\boldsymbol{\theta},\boldsymbol{\alpha};{F_{b}})$ and $\boldsymbol{Y}\sim \mathbb{ELS}(
	\lambda,\theta,\boldsymbol{\beta};F_{b}),$ with  $\boldsymbol{\alpha}=\boldsymbol{\beta}=\alpha \bm{1}_n$. Also, $\boldsymbol{\theta}\in\mathcal{D_+} ~(or~\mathcal{E_+})$. Then,
	$n\theta\geq\sum_{i=1}^{n}\theta_i\Rightarrow X_{n-1:n}\leq_{st}Y_{n-1:n}$, provided $w^2\tilde{r}_{b}(w)$ is increasing in $w>0$.
\end{corollary}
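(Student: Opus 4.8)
The plan is to derive the corollary as a direct specialization of Theorem~\ref{th3.1}. First I would observe that the vector $\boldsymbol{\delta}$ in the theorem is being chosen here as $\theta\boldsymbol{1}_n$, i.e.\ the degenerate vector whose components are all equal to the common scale $\theta$. Since a constant vector trivially lies in both $\mathcal{E_+}$ and $\mathcal{D_+}$, and since $\boldsymbol{\theta}\in\mathcal{D_+}$ (or $\mathcal{E_+}$) by hypothesis, the structural requirement ``$\boldsymbol{\theta},\boldsymbol{\delta}\in\mathcal{E_+}$ (or $\mathcal{D_+}$)'' of Theorem~\ref{th3.1} is met. The location parameters are both equal to $\lambda\boldsymbol{1}_n$ and the shape parameters are both equal to $\alpha\boldsymbol{1}_n$, exactly as the theorem demands, and the monotonicity condition on $w^2\tilde r_b(w)$ is assumed.

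Second, the only thing that needs checking is the majorization hypothesis $\boldsymbol{\theta}\succeq^{w}\theta\boldsymbol{1}_n$ (weak supermajorization). By Definition~\ref{definition2.2}, this requires $\sum_{i=1}^{k}\theta_{i:n}\ge \sum_{i=1}^{k}(\theta\boldsymbol{1}_n)_{i:n}=k\theta$ for every $k=1,\dots,n$. I would verify this using the pointer given in the text just before the corollary: the partial sums of the ordered components of $\boldsymbol{\theta}$ dominate those of the equalized vector precisely when $n\theta\ge\sum_{i=1}^n\theta_i$. Concretely, the smallest $k$ of the $\theta_i$'s have average at least $\theta$ as long as the overall average of all $n$ of them is at most $\theta$; a short Chebyshev-type / rearrangement argument (or simply citing \cite{marshall2010}) gives $\sum_{i=1}^{k}\theta_{i:n}\ge k\theta$ for each $k<n$, while for $k=n$ the inequality $\sum_{i=1}^n\theta_{i:n}=\sum_{i=1}^n\theta_i$ is only required to be $\ge n\theta$, which is the hypothesis itself. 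Hence $n\theta\ge\sum_{i=1}^n\theta_i$ is exactly equivalent to $\boldsymbol{\theta}\succeq^{w}\theta\boldsymbol{1}_n$.

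Finally, applying Theorem~\ref{th3.1} with $\boldsymbol{\mu}=\lambda\boldsymbol{1}_n$, $\boldsymbol{\delta}=\theta\boldsymbol{1}_n$, $\boldsymbol{\beta}=\alpha\boldsymbol{1}_n$ yields $X_{n-1:n}\le_{st}Y_{n-1:n}$, which is the assertion. I do not anticipate a genuine obstacle here; the ``hard part'' is only the bookkeeping check that the weak supermajorization $\boldsymbol{\theta}\succeq^{w}\theta\boldsymbol{1}_n$ really does reduce to the single scalar inequality $n\theta\ge\sum_{i=1}^n\theta_i$, and that the constant vector is admissible in $\mathcal{E_+}$ or $\mathcal{D_+}$ regardless of which of the two cases $\boldsymbol{\theta}$ falls into. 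Everything else is an immediate invocation of the already-proved theorem.
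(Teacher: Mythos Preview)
Your overall strategy is exactly the paper's: specialize Theorem~\ref{th3.1} with $\boldsymbol{\delta}=\theta\boldsymbol{1}_n$ and reduce the weak supermajorization hypothesis to the scalar condition $n\theta\ge\sum_{i=1}^n\theta_i$. However, your verification of $\boldsymbol{\theta}\succeq^{w}\theta\boldsymbol{1}_n$ has the inequalities reversed. By Definition~\ref{definition2.2}, $\boldsymbol{\theta}\succeq^{w}\boldsymbol{\delta}$ means $\boldsymbol{\delta}\preceq^{w}\boldsymbol{\theta}$, i.e.\ $\sum_{i=1}^{k}\delta_{i:n}\ge\sum_{i=1}^{k}\theta_{i:n}$ for every $k$. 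With $\boldsymbol{\delta}=\theta\boldsymbol{1}_n$ this reads $k\theta\ge\sum_{i=1}^{k}\theta_{i:n}$, \emph{not} $\sum_{i=1}^{k}\theta_{i:n}\ge k\theta$ as you wrote. Your claimed inequality is false in general (take all $\theta_i=1$ and $\theta=2$), and your supporting sentence ``the smallest $k$ of the $\theta_i$'s have average at least $\theta$'' is the opposite of what holds.

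The correct (and very short) argument is: the average of the $k$ smallest $\theta_i$'s is at most the overall average $\tfrac{1}{n}\sum_{i=1}^n\theta_i$, which by hypothesis is at most $\theta$; hence $\sum_{i=1}^{k}\theta_{i:n}\le k\theta$ for every $k$, and for $k=n$ this is precisely the assumption $n\theta\ge\sum_{i=1}^n\theta_i$. Thus $\boldsymbol{\theta}\succeq^{w}\theta\boldsymbol{1}_n$ holds (and is in fact equivalent to $n\theta\ge\sum_{i=1}^n\theta_i$), after which Theorem~\ref{th3.1} applies exactly as you outlined.
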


In the previous theorem, we have considered that the location parameters are the same and fixed. In the following theorem, we assume that the location parameters are the same but vector valued. The sufficient conditions here undergo little modification.
\begin{theorem}\label{th3.2}
	Suppose  $\boldsymbol{X}\sim \mathbb{ELS}(
	\boldsymbol{\lambda},\boldsymbol{\theta},\boldsymbol{\alpha};F_{b})$ and $\boldsymbol{Y}\sim \mathbb{ELS}(
	\boldsymbol{\mu},\boldsymbol{\delta},\boldsymbol{\beta};F_{b}),$ with $\boldsymbol{\lambda}=\boldsymbol{\mu}$, $\boldsymbol{\alpha}=\boldsymbol{\beta}=\alpha \boldsymbol{1}_{n}$. Further, let $\boldsymbol{\lambda},~ \boldsymbol{\theta},~\boldsymbol{\delta}\in\mathcal{E_+}~(or~\mathcal{D_+})$ and $w \tilde{r}_{b}(w)$ be increasing in $w>0$. Then,  ${\boldsymbol\delta}\succeq^{w}{{\boldsymbol\theta}}\Rightarrow Y_{n-1:n}\leq_{st}X_{n-1:n}$.
\end{theorem}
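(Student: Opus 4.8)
The plan is to follow the template of the proof of Theorem~\ref{th3.1}, the only substantive change being that the common location vector $\boldsymbol{\lambda}=\boldsymbol{\mu}$ may now vary over coordinates; accordingly the auxiliary function depends on $\boldsymbol{\lambda}$, the sufficient condition weakens from $w^{2}\tilde{r}_{b}(w)$ to $w\tilde{r}_{b}(w)$ being increasing, and the monotone-rearrangement hypotheses $\boldsymbol{\lambda},\boldsymbol{\theta},\boldsymbol{\delta}\in\mathcal{E}_{+}$ (or $\mathcal{D}_{+}$) are what make the argument go through. Since $\boldsymbol{X}$ and $\boldsymbol{Y}$ share $\boldsymbol{\lambda}$, the shape vector $\alpha\boldsymbol{1}_{n}$ and $F_{b}$, for $x>\max_{k}\lambda_{k}$ both distribution functions are the same function of the scale vector, namely
$$\Psi_{2}(\boldsymbol{s})=\sum_{l=1}^{n}\prod_{k\neq l}\Big[F_{b}\Big(\tfrac{x-\lambda_{k}}{s_{k}}\Big)\Big]^{\alpha}-(n-1)\prod_{k=1}^{n}\Big[F_{b}\Big(\tfrac{x-\lambda_{k}}{s_{k}}\Big)\Big]^{\alpha},$$
so that $F_{X_{n-1:n}}(x)=\Psi_{2}(\boldsymbol{\theta})$ and $G_{Y_{n-1:n}}(x)=\Psi_{2}(\boldsymbol{\delta})$, and $Y_{n-1:n}\leq_{st}X_{n-1:n}$ amounts to $\Psi_{2}(\boldsymbol{\delta})\geq\Psi_{2}(\boldsymbol{\theta})$ for all such $x$ (for $x\leq\max_{k}\lambda_{k}$ the same kind of estimate, applied to the surviving sub-product, settles the inequality, and when two or more thresholds exceed $x$ both sides vanish). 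Working on $\mathcal{E}_{+}$ (the $\mathcal{D}_{+}$ case being analogous, using Lemma~\ref{lem2.1a} in place of Lemma~\ref{lem2.1b}), it thus suffices to show that $\Psi_{2}$ is decreasing in each argument and that its $k$th partial derivative is increasing in $k$.

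First I would differentiate: writing $w_{i}=(x-\lambda_{i})/\theta_{i}$ and $d_{k}=[F_{b}((x-\lambda_{k})/\theta_{k})]^{\alpha}$, one obtains
$$\frac{\partial\Psi_{2}(\boldsymbol{\theta})}{\partial\theta_{i}}=-\frac{\alpha}{\theta_{i}}\big[w\,\tilde{r}_{b}(w)\big]_{w=w_{i}}\Big[\sum_{\substack{l=1\\l\neq i}}^{n}\prod_{k\neq l}d_{k}-(n-1)\prod_{k=1}^{n}d_{k}\Big],$$
the analogue of (\ref{eq3.3}); note that the prefactor $1/\theta_{i}$ can no longer be folded into $w_{i}$, which is exactly why the hypothesis is phrased with $w\tilde{r}_{b}(w)$ rather than $w^{2}\tilde{r}_{b}(w)$. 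Since $\prod_{k\neq l}d_{k}\geq\prod_{k=1}^{n}d_{k}$ (as $d_{k}\leq1$), the bracket is nonnegative and hence $\partial\Psi_{2}/\partial\theta_{i}\leq0$. For the monotonicity in $i$, fix $1\leq i\leq j\leq n$; since $\boldsymbol{\lambda},\boldsymbol{\theta}\in\mathcal{E}_{+}$ we have $\lambda_{i}\leq\lambda_{j}$ and $\theta_{i}\leq\theta_{j}$, whence $w_{i}\geq w_{j}>0$, $1/\theta_{i}\geq1/\theta_{j}$ and $d_{i}\geq d_{j}$, and (mimicking (\ref{eq3.6})) the bracket at $i$ dominates the bracket at $j$. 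Using that $w\tilde{r}_{b}(w)$ is increasing gives $w_{i}\tilde{r}_{b}(w_{i})\geq w_{j}\tilde{r}_{b}(w_{j})$, and multiplying the three factorwise inequalities between nonnegative quantities yields $\partial\Psi_{2}/\partial\theta_{i}-\partial\Psi_{2}/\partial\theta_{j}\leq0$, i.e.\ the $k$th partial derivative of $\Psi_{2}$ is increasing in $k$ on $\mathcal{E}_{+}$.

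With both properties in hand, Lemma~\ref{lem2.1b} gives $\boldsymbol{s}\succeq^{m}\boldsymbol{s}'\Rightarrow\Psi_{2}(\boldsymbol{s})\geq\Psi_{2}(\boldsymbol{s}')$ on $\mathcal{E}_{+}$, and then the decreasing property upgrades this to weak supermajorization: realizing any vector weakly supermajorized by $\boldsymbol{\delta}$ as one lying componentwise below a vector that $\boldsymbol{\delta}$ majorizes (cf.\ Theorem~$A.8$ of \cite{marshall2010}), one concludes $\boldsymbol{\delta}\succeq^{w}\boldsymbol{\theta}\Rightarrow\Psi_{2}(\boldsymbol{\delta})\geq\Psi_{2}(\boldsymbol{\theta})$, i.e.\ $Y_{n-1:n}\leq_{st}X_{n-1:n}$.

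The step I expect to be the main obstacle is the monotonicity of the gradient in $k$: because the locations differ, $\Psi_{2}$ is not symmetric in the scale parameters, so Schur-convexity cannot be invoked directly and one must stay on the ordered cones; the assumption $\boldsymbol{\lambda}\in\mathcal{E}_{+}$ (resp.\ $\mathcal{D}_{+}$) is precisely what forces $w_{i}\geq w_{j}$ (resp.\ $\leq$) and so aligns every factorwise comparison, while the replacement of $w^{2}\tilde{r}_{b}(w)$ by $w\tilde{r}_{b}(w)$ compensates for the $1/\theta_{i}$ that can no longer be absorbed. A secondary care point is that, for a non-symmetric function, the passage from $\succeq^{m}$ to $\succeq^{w}$ should be run through the decreasing property on $\mathcal{E}_{+}$ rather than by a direct appeal to the symmetric majorization theory.
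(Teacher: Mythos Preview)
Your proposal is correct and follows precisely the route the paper intends: the paper's own proof simply reads ``similar to that of Theorem~\ref{th3.1}'' and omits the details, and you have supplied exactly those details, correctly identifying that with a vector-valued $\boldsymbol{\lambda}$ the factor $1/\theta_{i}$ can no longer be merged with $w_{i}$ into $w_{i}^{2}/(x-\lambda)$, so the two inequalities $1/\theta_{i}\geq 1/\theta_{j}$ and $[w\tilde{r}_{b}(w)]_{w=w_{i}}\geq [w\tilde{r}_{b}(w)]_{w=w_{j}}$ must be combined separately, which is why the hypothesis becomes ``$w\tilde{r}_{b}(w)$ increasing'' and why the common ordering $\boldsymbol{\lambda}\in\mathcal{E}_{+}$ (or $\mathcal{D}_{+}$) is needed to force $w_{i}\geq w_{j}$. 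Your remark that the non-symmetry of $\Psi_{2}$ in the scale arguments forces one to work on the ordered cones via Lemmas~\ref{lem2.1a}--\ref{lem2.1b} rather than via symmetric Schur theory is also on point and matches the paper's framework.
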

\begin{proof}
	The proof of this theorem is similar to that of Theorem \ref{th3.1}. Thus, it is omitted for the sake of brevity.
\end{proof}

In the same vein as Corollary \ref{cor3.1}, the following corollary readily follows.
\begin{corollary}
	For $\boldsymbol{X}\sim \mathbb{ELS}(
	\boldsymbol{\lambda},\delta,\alpha;F_{b})$ and $\boldsymbol{Y}\sim \mathbb{ELS}(
	\boldsymbol{\mu},\boldsymbol{\delta},\alpha;F_{b}),$ with $\boldsymbol{\lambda}=\boldsymbol{\mu}$, we have $n\delta\ge\sum_{i=1}^{n}\delta_i\Rightarrow Y_{n-1:n}\leq_{st}X_{n-1:n}$, provided $\boldsymbol{\lambda},~\boldsymbol{\delta}\in\mathcal{E_+}~(or~\mathcal{D_+})$ and $w \tilde{r}_{b}(w)$ is increasing in $w>0$.
\end{corollary}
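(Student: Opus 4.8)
The plan is to obtain this corollary as an immediate specialization of Theorem~\ref{th3.2}, exactly paralleling the way Corollary~\ref{cor3.1} was deduced from Theorem~\ref{th3.1}. I would take $\boldsymbol{\theta}=\delta\boldsymbol{1}_n$ in Theorem~\ref{th3.2}, so that $\boldsymbol{X}\sim\mathbb{ELS}(\boldsymbol{\lambda},\delta\boldsymbol{1}_n,\alpha\boldsymbol{1}_n;F_b)$ coincides with $\mathbb{ELS}(\boldsymbol{\lambda},\delta,\alpha;F_b)$, while $\boldsymbol{Y}\sim\mathbb{ELS}(\boldsymbol{\mu},\boldsymbol{\delta},\alpha\boldsymbol{1}_n;F_b)$ with $\boldsymbol{\mu}=\boldsymbol{\lambda}$. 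The first step is to confirm that all structural hypotheses of Theorem~\ref{th3.2} are in force: $\boldsymbol{\lambda}=\boldsymbol{\mu}$ and $\boldsymbol{\alpha}=\boldsymbol{\beta}=\alpha\boldsymbol{1}_n$ hold by construction; $w\tilde{r}_b(w)$ increasing is assumed; and since the constant vector $\delta\boldsymbol{1}_n$ with $\delta>0$ lies simultaneously in $\mathcal{E}_+$ and in $\mathcal{D}_+$, the requirement that $\boldsymbol{\lambda},\boldsymbol{\theta},\boldsymbol{\delta}$ all belong to $\mathcal{E}_+$ (or all to $\mathcal{D}_+$) reduces to the given hypothesis $\boldsymbol{\lambda},\boldsymbol{\delta}\in\mathcal{E}_+\,(\text{or }\mathcal{D}_+)$.

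The only genuine point to verify is the majorization input of Theorem~\ref{th3.2}, namely $\boldsymbol{\delta}\succeq^{w}\boldsymbol{\theta}=\delta\boldsymbol{1}_n$, which by Definition~\ref{definition2.2} amounts to $\sum_{i=1}^{k}\delta_{i:n}\le k\delta$ for every $k=1,\dots,n$. I would derive this from $n\delta\ge\sum_{i=1}^{n}\delta_i$ by the elementary averaging estimate: the sum of the $k$ smallest components of $\boldsymbol{\delta}$ never exceeds the proportional share of the total, so $\sum_{i=1}^{k}\delta_{i:n}\le\frac{k}{n}\sum_{i=1}^{n}\delta_i\le\frac{k}{n}\cdot n\delta=k\delta$. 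This is precisely the observation recorded in the paragraph preceding Corollary~\ref{cor3.1}, so $\delta\boldsymbol{1}_n\preceq^{w}\boldsymbol{\delta}$, that is, $\boldsymbol{\delta}\succeq^{w}\boldsymbol{\theta}$.

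With every hypothesis of Theorem~\ref{th3.2} checked, the theorem delivers $Y_{n-1:n}\le_{st}X_{n-1:n}$, which is the claim. I do not expect any real obstacle: the substantive content is carried entirely by Theorem~\ref{th3.2}, and the sole nontrivial step—passing from the scalar inequality $n\delta\ge\sum_{i=1}^{n}\delta_i$ to weak supermajorization—is a one-line consequence of the definition of $\preceq^{w}$ together with the fact that partial sums of order statistics grow sublinearly relative to the full sum.
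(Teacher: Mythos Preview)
Your proposal is correct and matches the paper's intended approach exactly: the paper simply states that this corollary follows ``in the same vein as Corollary~\ref{cor3.1}'', i.e., by specializing Theorem~\ref{th3.2} with $\boldsymbol{\theta}=\delta\boldsymbol{1}_n$ and invoking the observation (recorded just before Corollary~\ref{cor3.1}) that $n\delta\ge\sum_{i=1}^n\delta_i$ yields $\boldsymbol{\delta}\succeq^{w}\delta\boldsymbol{1}_n$. Your averaging argument for the weak supermajorization step is precisely the justification the paper leaves implicit.
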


The following counterexample reveals that if $\boldsymbol{\lambda},~ \boldsymbol{\theta},~\boldsymbol{\delta}\notin\mathcal{E_+}~(or~\mathcal{D_+})$, then the result stated in Theorem \ref{th3.2} may not hold.
\begin{counterexample}\label{cex3.1}
	Let us consider two vectors $\boldsymbol{X}\sim \mathbb{ELS}((3,4,5),(3,0.1,0.02),3;(\frac{x}{10})^{0.001})$ and
	$\boldsymbol{Y}\sim \mathbb{ELS}((3,4,5),(2,0.03,0.01),3;(\frac{x}{10})^{0.001})$, where $0<x\le 10.$ Here, $w \tilde{r}_{b}(w)$ is increasing. The assumptions of Theorem \ref{th3.2} except the restrictions taken on the vectors of the parameters hold. Now, to check if the stated stochastic order holds,
	we plot the graphs of ${F}_{X_{2:3}}(x)$ and ${F}_{Y_{2:3}}(x)$ in Figure $2a$. The graphs cross each other near the point $x=5.9$. This shows that the usual stochastic order in Theorem \ref{th3.2} can not be obtained, if one ignores the restrctions on the parameters vectors.
\end{counterexample}
\begin{figure}[h]
	\begin{center}
		\subfigure[]{\label{c2.0}\includegraphics[height=2.41in]{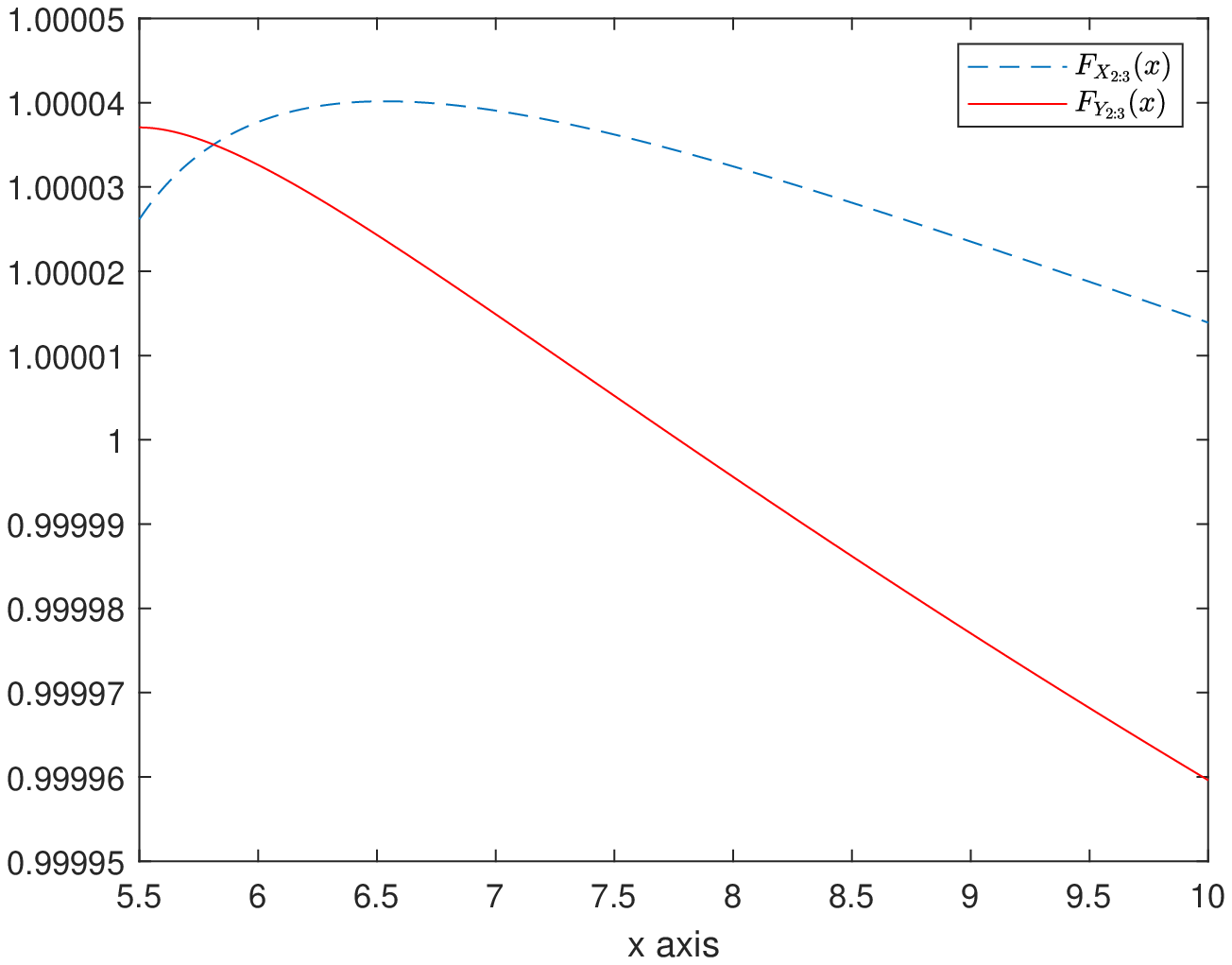}}
		\subfigure[]{\label{c1.0}\includegraphics[height=2.41in]{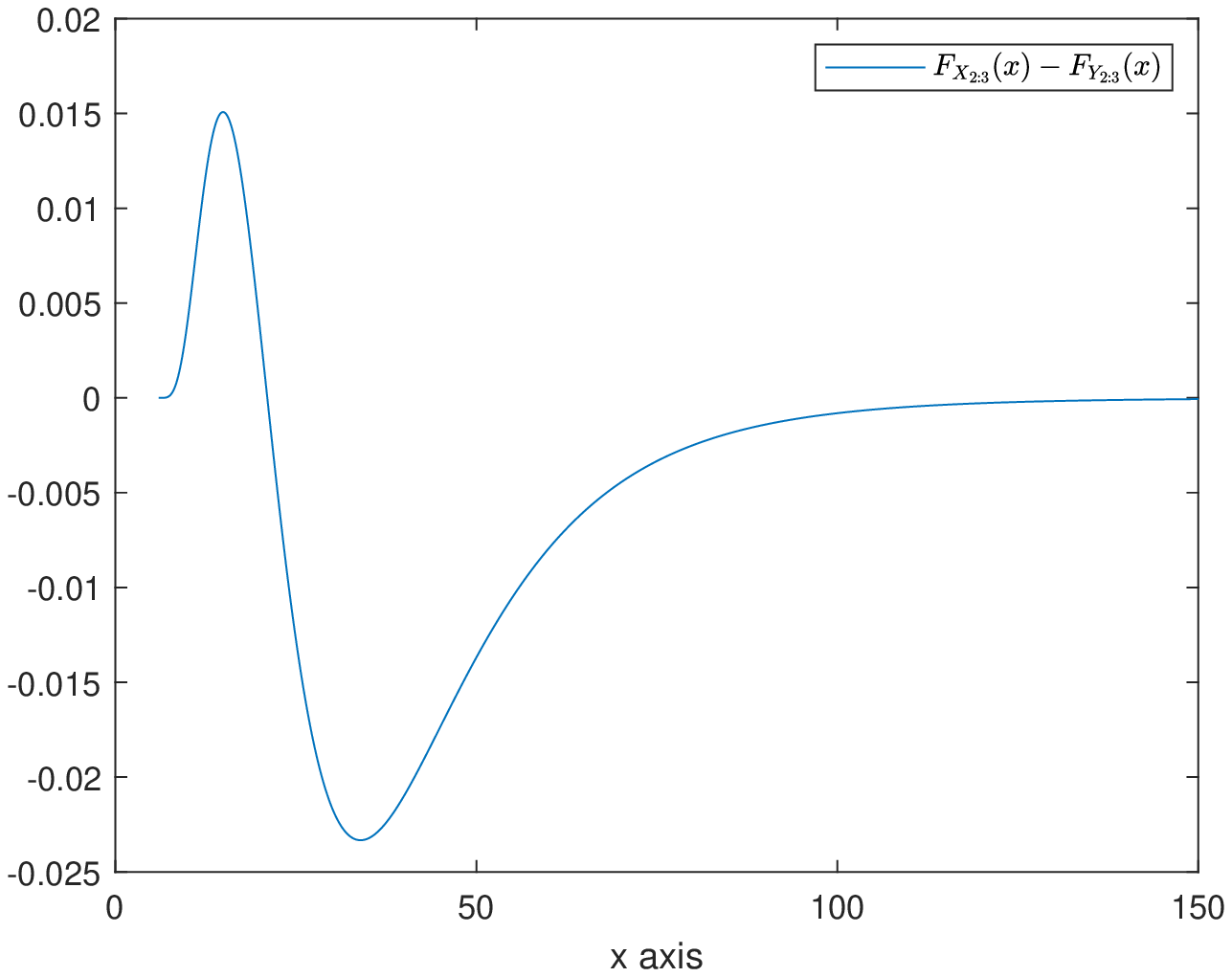}}
		\caption{
			(a) Plots of ${F}_{X_{2:3}}(x)$ and ${F}_{Y_{2:3}}(x)$ as in Counterexample \ref{cex3.1}. (b) Plot of the difference $[{F}_{X_{2:3}}(x)-{F}_{Y_{2:3}}(x)]$ as in Counterexample \ref{cex3.3}.
		}
	\end{center}
\end{figure}
In the next result, we obtain sufficient conditions for the usual stochastic ordering between two second-largest order statistics, with the location and scale parameters being fixed. In particular, it proves that the weak supermajorized shape parameter vector produces a system with higher reliability.
\begin{theorem}\label{th3.3}
	Suppose  $\boldsymbol{X}\sim \mathbb{ELS}(
	\boldsymbol{\lambda},\boldsymbol{\theta},\boldsymbol{\alpha};F_{b})$ and $\boldsymbol{Y}\sim \mathbb{ELS}(
	\boldsymbol{\mu},\boldsymbol{\delta},\boldsymbol{\beta};F_{b}),$ with $\boldsymbol{\lambda}=\boldsymbol{\mu}=\lambda\boldsymbol{1}_{n}$ and $\boldsymbol{\theta}=\boldsymbol{\delta}=\theta\boldsymbol{1}_{n}$. Also, let $\boldsymbol{\beta}, ~\boldsymbol{\alpha}\in\mathcal{E_+}~(or~\mathcal{D_+})$. Then, $\boldsymbol\alpha\succeq^{w}\boldsymbol\beta\Rightarrow X_{n-1:n}\leq_{st}Y_{n-1:n}$.
\end{theorem}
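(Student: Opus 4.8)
The plan is to follow the scheme of the proof of Theorem~\ref{th3.1}, with the vector of shape parameters now in the role of the variable; as there, I would write out only the case $\boldsymbol{\alpha},\boldsymbol{\beta}\in\mathcal{E_+}$, the case $\mathcal{D_+}$ being handled identically with Lemma~\ref{lem2.1a} in place of Lemma~\ref{lem2.1b}. First I would fix $x$ and reduce to a scalar. Since $\boldsymbol{\lambda}=\boldsymbol{\mu}=\lambda\boldsymbol{1}_{n}$ and $\boldsymbol{\theta}=\boldsymbol{\delta}=\theta\boldsymbol{1}_{n}$, writing $t:=F_{b}((x-\lambda)/\theta)\in[0,1]$ the claimed inequality is trivial when $t\in\{0,1\}$ (the two distribution functions being then both $0$ or both $1$), so it remains to treat $t\in(0,1)$. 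For such $x$, (\ref{eq3.1}) gives $F_{X_{n-1:n}}(x)=\Psi_3(\boldsymbol{\alpha})$ and $G_{Y_{n-1:n}}(x)=\Psi_3(\boldsymbol{\beta})$, where
$$\Psi_3(\boldsymbol{\gamma}):=\sum_{l=1}^{n}\prod_{k\neq l}^{n}t^{\gamma_k}-(n-1)\prod_{k=1}^{n}t^{\gamma_k},$$
so the task reduces to proving $\boldsymbol{\alpha}\succeq^{w}\boldsymbol{\beta}\Rightarrow\Psi_3(\boldsymbol{\alpha})\geq\Psi_3(\boldsymbol{\beta})$ for every such $x$.

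By Theorem~$A.8$ of \cite{marshall2010} (invoked exactly as in the proof of Theorem~\ref{th3.1}) it is enough to check that $\Psi_3$ is decreasing and Schur-convex on $\mathcal{E_+}$, and by Lemma~\ref{lem2.1b} the Schur-convexity amounts to $\partial\Psi_3/\partial\gamma_i$ being increasing in $i$ on $\mathcal{E_+}$. Differentiating the displayed expression I would obtain
$$\frac{\partial\Psi_3(\boldsymbol{\gamma})}{\partial\gamma_i}=(\ln t)\sum_{\overset{l=1}{l\neq i}}^{n}\left[\prod_{k\neq l}^{n}t^{\gamma_k}-\prod_{k=1}^{n}t^{\gamma_k}\right];$$
since $0<t<1$ forces $\ln t<0$, and each bracketed term, being $(1-t^{\gamma_l})\prod_{k\neq l}^{n}t^{\gamma_k}$ with $t^{\gamma_l}\leq 1$, is nonnegative, we get $\partial\Psi_3/\partial\gamma_i\leq 0$, i.e.\ $\Psi_3$ is decreasing. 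Moreover, for $1\leq i\leq j\leq n$ (so $\gamma_i\leq\gamma_j$ on $\mathcal{E_+}$) the common summands cancel and
$$\frac{\partial\Psi_3(\boldsymbol{\gamma})}{\partial\gamma_i}-\frac{\partial\Psi_3(\boldsymbol{\gamma})}{\partial\gamma_j}=(\ln t)\left(\prod_{k\neq\{i,j\}}^{n}t^{\gamma_k}\right)\big(t^{\gamma_i}-t^{\gamma_j}\big)\leq 0,$$
because $t^{\gamma_i}\geq t^{\gamma_j}$ when $\gamma_i\leq\gamma_j$ and $0<t<1$, while $\ln t<0$; thus $\partial\Psi_3/\partial\gamma_i$ is increasing in $i$. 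Hence $\Psi_3$ is decreasing and Schur-convex, and the stated implication $X_{n-1:n}\leq_{st}Y_{n-1:n}$ follows from Theorem~$A.8$ of \cite{marshall2010}.

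I do not expect a serious obstacle; the argument is essentially routine once two points are treated carefully. The first is the reduction to the constant $t=F_{b}((x-\lambda)/\theta)$, which is what makes differentiation with respect to a shape parameter elementary (it merely produces the factor $\ln t$). The second is bookkeeping the direction of the order: $\boldsymbol{\alpha}\succeq^{w}\boldsymbol{\beta}$ means $\boldsymbol{\beta}$ is weakly supermajorized by $\boldsymbol{\alpha}$, and ``decreasing and Schur-convex'' is precisely the combination that propagates $\preceq^{w}$ in the direction needed for $X_{n-1:n}\leq_{st}Y_{n-1:n}$. In contrast to Theorems~\ref{th3.1} and~\ref{th3.2}, no monotonicity assumption on the baseline reversed hazard rate is required here, since differentiating the distribution function with respect to a shape parameter yields only the harmless negative factor $\ln t$.
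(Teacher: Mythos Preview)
Your proof is correct and follows essentially the same route as the paper: you show that $F_{X_{n-1:n}}(x)$, viewed as a function of the shape vector, is decreasing and Schur-convex on $\mathcal{E_+}$ by computing the partial derivative (which contains the factor $\ln F_b((x-\lambda)/\theta)$) and the difference of two partials, then conclude via Theorem~A.8 of \cite{marshall2010}. The only cosmetic differences are your abbreviation $t=F_b((x-\lambda)/\theta)$ and your explicit disposal of the degenerate cases $t\in\{0,1\}$.
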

\begin{proof}
	Denote $\Psi_2\left({\boldsymbol \alpha}\right)=F_{X_{n-1:n}}(x)$, where the distribution function of $X_{n-1:n}$ can be written from (\ref{eq3.1}) accordingly to the present set-up. The partial derivative of  $\Psi_2\left({\boldsymbol \alpha}\right)$ with respect to $\alpha_i,$ for $i=1,\cdots,n$ is obtained as
	\begin{equation}\label{eq3.7}
	\frac{\partial\Psi_2\left({\boldsymbol \alpha}\right)}{\partial \alpha_i}=\left[\ln F\left(\frac{x-\lambda}{\theta}\right)\right]\left[\sum\limits_{\overset{l=1}{l\neq i}}^{n}\prod_{k\neq l}^{n}q_{k}-(n-1)\prod_{k=1}^{n}q_{k}\right],
	\end{equation}
	where $q_{k}=\left[F_{b}\left(\frac{x-\lambda}{\theta}\right)\right]^{\alpha_{k}},$ for $k=1,\cdots,n.$ The proof of this theorem will be completed if we show that the function $\Psi_2\left({\boldsymbol \alpha}\right)$ is decreasing and  Schur-convex with respect to $\boldsymbol{\alpha}\in \mathcal{E_+}.$ This is equivalent to establish that the partial derivative $\frac{\partial\Psi_2\left({\boldsymbol \alpha}\right)}{\partial \alpha_i}$ given by (\ref{eq3.7}) is negative and increasing with respect to $\alpha_i,$ for $i=1,\cdots,n.$
	Consider  $1\leq i\leq j \leq n.$ Then,  $\alpha_i\leq\alpha_j$ and
	 $\left[F_{b}\left(\frac{x-\lambda}{\theta}\right)\right]^{\alpha_{i}}\geq\left[F_{b}\left(\frac{x-\lambda}{\theta}\right)\right]^{\alpha_{j}}.$
	Further, it is easy to check that $\frac{\partial\Psi_2\left({\boldsymbol \alpha}\right)}{\partial \alpha_i}$ is at most zero, since the first and second third-bracketed terms in (\ref{eq3.7}) are respectively negative and positive. Now, for $\alpha_i\leq\alpha_j$, consider
	\begin{eqnarray}
	\frac{\partial\Psi_2\left({\boldsymbol \alpha}\right)}{\partial \alpha_i}-\frac{\partial\Psi_2\left({\boldsymbol \alpha}\right)}{\partial \alpha_j}&=&
	\left[\ln F\left(\frac{x-\lambda}{\theta}\right)\right]\left[\sum\limits_{\overset{l=1}{l\neq i}}^{n}\prod_{k\neq l}^{n}q_{k}-\sum\limits_{\overset{l=1}{l\neq j}}^{n}\prod_{k\neq l}^{n}q_{k}\right]\nonumber\\
	&=& \left[\ln F\left(\frac{x-\lambda}{\theta}\right)\right]\prod_{k\neq \{i,j\}}^{n}q_{k}\left[\left[F_{b}\left(\frac{x-\lambda}{\theta}\right)\right]^{\alpha_{i}}-\left[F_{b}\left(\frac{x-\lambda}{\theta}\right)\right]^{\alpha_{j}}\right]\nonumber\\
	&\le& 0.
	\end{eqnarray}
	This implies that $\frac{\partial\Psi_2\left({\boldsymbol \alpha}\right)}{\partial \alpha_i}$ is increasing with respect to $\alpha_i$, for $i=1,\cdots,n.$ Hence, the rest of the proof follows from
	Theorem $A.8$ of \cite{marshall2010}. The proof for the case $\bm{\alpha}\in\mathcal{D_+}$ follows in a manner similar to that when $\bm{\alpha}\in\mathcal{E_+}$. So, it is omitted.
\end{proof}
Now, we obtain some comparison results between the second-largest order statistics in terms of the reversed hazard rate order. The reversed hazard rate function of $X_{n-1:n}$ is given by
\begin{align}\label{eq3.9}
\tilde{r}_{X_{n-1:n}}(x)=\sum\limits_{i=1}^{n}\frac{1}{\theta_{i}}\tilde{r}_{b}\left(\frac{x-\lambda_{i}}{\theta_{i}}\right)+\left[\sum\limits_{i=1}^{n}\frac{1}{\theta_{i}}\left[\frac{\tilde{r}_b\left(\frac{x-\lambda_{i}}{\theta_{i}}\right)}{r_b\left(\frac{x-\lambda_{i}}{\theta_{i}}\right)}\right]'\right]\left[\sum\limits_{i=1}^{n}\left[\frac{\tilde{r}_b\left(\frac{x-\lambda_{i}}{\theta_{i}}\right)}{r_b\left(\frac{x-\lambda_{i}}{\theta_{i}}\right)}\right]+1\right]^{-1}.
\end{align}

The next consecutive four theorems provide conditions, under which the reversed hazard rate order  between $X_{n-1:n}$ and $Y_{n-1:n}$ exists. For convenience of the presentation of the results, we first state the following conditions:
\begin{itemize}
	\item[(C1)] $w^2[w\tilde{r}_{b}(w)]'$, $[\tilde{r}_{b}(w)/r_{b}(w)],$  $w^2[\tilde{r}_{b}(w)/r_{b}(w)]'$ and $w^2[w[\tilde{r}_{b}(w)/r_{b}(w)]']'$ are decreasing.
	\item [(C2)] $\tilde{r}_{b}(w)$ is convex, $\tilde{r}_b(w)/r_b(w)$ is decreasing, convex and $[\tilde{r}_b(w)/r_b(w)]''$ is increasing.
	\item[(C3)] $w\tilde{r}_{b}(w)$, $w^2[w\tilde{r}_{b}(w)]'$, $[\tilde{r}_{b}(w)/r_{b}(w)]$, $w[\tilde{r}_{b}(w)/r_{b}(w)]'$, $w^2[\tilde{r}_{b}(w)/r_{b}(w)]'$  and $w^2[w[\tilde{r}_{b}(w)/r_{b}(w)]']'$ are decreasing.
	\item[(C4)] $w\tilde{r}_{b}(w)$, $[\tilde{r}_{b}(w)/r_{b}(w)]$, $w[\tilde{r}_{b}(w)/r_{b}(w)]'$ are decreasing, $w^2[\tilde{r}_{b}(w)/r_{b}(w)]'$, $w^2[w\tilde{r}_{b}(w)]'$ and  $w^2[w[\tilde{r}_{b}(w)/r_{b}(w)]']'$ are increasing.
\end{itemize}

The result stated below reveals that a $2$-out-of-$n$ system with majorized scale parameter vector has larger reversed hazard rate.

\begin{theorem}\label{th3.4}
	For  $\boldsymbol{X}\sim \mathbb{ELS}(
	\boldsymbol{\lambda},\boldsymbol{\theta},\boldsymbol{\alpha};{F_{b}})$ and $\boldsymbol{Y}\sim \mathbb{ELS}(
	\boldsymbol{\mu},\boldsymbol{\delta},\boldsymbol{\beta};F_{b}),$ with $\boldsymbol{\lambda}=\boldsymbol{\mu}=\lambda \boldsymbol{1}_{n}$, $\boldsymbol{\alpha}=\boldsymbol{\beta}=\boldsymbol{1}_{n}$, if $ \boldsymbol{\theta},~\boldsymbol{\delta}\in\mathcal{D_+}~(or~\mathcal{E_+})$, then
	${\boldsymbol\theta}\succeq^{m}{\boldsymbol\delta}\Rightarrow X_{n-1:n}\leq_{rh}Y_{n-1:n}$, provided (C1) holds.
\end{theorem}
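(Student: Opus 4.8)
The plan is to fix $x>\lambda$ and to treat the reversed hazard rate in (\ref{eq3.9}), specialised to $\lambda_{i}=\lambda$ and $\alpha_{i}=1$, as a function of the scale vector alone: put $\Phi(\boldsymbol{\theta}):=\tilde{r}_{X_{n-1:n}}(x)$. Since $\boldsymbol{Y}$ differs from $\boldsymbol{X}$ only through its scale vector, $\tilde{r}_{Y_{n-1:n}}(x)=\Phi(\boldsymbol{\delta})$, so in view of $\boldsymbol{\theta}\succeq^{m}\boldsymbol{\delta}$ it suffices to prove that $\Phi$ is Schur-concave on $\mathcal{D}_{+}$ (respectively on $\mathcal{E}_{+}$): this yields $\Phi(\boldsymbol{\theta})\le\Phi(\boldsymbol{\delta})$ for every $x>\lambda$, which is exactly $X_{n-1:n}\leq_{rh}Y_{n-1:n}$ (for $x\le\lambda$ both reversed hazard rates vanish). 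Writing $w_{i}=(x-\lambda)/\theta_{i}$ and $g(w)=\tilde{r}_{b}(w)/r_{b}(w)=\bar{F}_{b}(w)/F_{b}(w)$, formula (\ref{eq3.9}) reads
\[
\Phi(\boldsymbol{\theta})=\frac{1}{x-\lambda}\sum_{i=1}^{n}w_{i}\tilde{r}_{b}(w_{i})+\frac{\dfrac{1}{x-\lambda}\sum_{i=1}^{n}w_{i}g'(w_{i})}{1+\sum_{i=1}^{n}g(w_{i})}.
\]

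The second step is to differentiate. Since $\partial w_{i}/\partial\theta_{k}$ vanishes for $i\neq k$ and equals $-w_{k}^{2}/(x-\lambda)$ for $i=k$, a direct computation (the quotient rule on the second summand, with denominator $D:=1+\sum_{l}g(w_{l})\ge1$) gives
\[
\frac{\partial\Phi(\boldsymbol{\theta})}{\partial\theta_{k}}=-\frac{1}{(x-\lambda)^{2}}\left(\Bigl(w^{2}[w\tilde{r}_{b}(w)]'\Bigr)_{w=w_{k}}+\frac{1}{D}\Bigl(w^{2}[w[\tilde{r}_{b}(w)/r_{b}(w)]']'\Bigr)_{w=w_{k}}-\frac{\sum_{l}w_{l}g'(w_{l})}{D^{2}}\Bigl(w^{2}[\tilde{r}_{b}(w)/r_{b}(w)]'\Bigr)_{w=w_{k}}\right).
\]
The point of this grouping is that the three surviving blocks are governed, respectively, by $w^{2}[w\tilde{r}_{b}(w)]'$, by $w^{2}\bigl[w[\tilde{r}_{b}(w)/r_{b}(w)]'\bigr]'$, and by $w^{2}[\tilde{r}_{b}(w)/r_{b}(w)]'$ together with $\tilde{r}_{b}(w)/r_{b}(w)$ (the latter entering through $g$ in $D$ and through $\sum_{l}w_{l}g'(w_{l})$) --- precisely the four functions listed in (C1).

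The third step invokes Lemma \ref{lem2.1a} on $\mathcal{D}_{+}$ and Lemma \ref{lem2.1b} on $\mathcal{E}_{+}$: $\Phi$ is Schur-concave (the ``$\le$'' case) iff $\partial\Phi/\partial\theta_{k}$ is increasing in $k$ on $\mathcal{D}_{+}$, respectively decreasing in $k$ on $\mathcal{E}_{+}$. In the $\mathcal{E}_{+}$ case take $1\le i<j\le n$, so $\theta_{i}\le\theta_{j}$, hence $w_{i}\ge w_{j}>0$; I would subtract $\partial\Phi/\partial\theta_{j}$ from $\partial\Phi/\partial\theta_{i}$ and check the sign block by block. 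Decreasingness of $w^{2}[w\tilde{r}_{b}(w)]'$ and of $w^{2}\bigl[w[\tilde{r}_{b}(w)/r_{b}(w)]'\bigr]'$ makes the first two blocks contribute with the correct sign (using $D\ge1>0$); for the third block, $\tilde{r}_{b}/r_{b}=\bar{F}_{b}/F_{b}$ being nonnegative and decreasing (by (C1)) gives $g'\le0$, hence $\sum_{l}w_{l}g'(w_{l})\le0$, while decreasingness of $w^{2}[\tilde{r}_{b}(w)/r_{b}(w)]'$ controls $w_{k}^{2}g'(w_{k})$, so the product in that block lands on the right side. Hence $\partial\Phi/\partial\theta_{i}\ge\partial\Phi/\partial\theta_{j}$, i.e.\ $\partial\Phi/\partial\theta_{k}$ is decreasing in $k$; the $\mathcal{D}_{+}$ case is the same argument with all inequalities reversed. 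Lemma \ref{lem2.1b} (resp.\ \ref{lem2.1a}) then gives $\Phi(\boldsymbol{\theta})\le\Phi(\boldsymbol{\delta})$, which proves Theorem \ref{th3.4}.

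I expect the only real obstacle to be the bookkeeping in the differentiation: one must expand the quotient-rule derivative of the second summand of (\ref{eq3.9}) and regroup the many resulting terms so that every surviving factor is visibly one of the four monotone functions of (C1); once that is done the sign analysis is routine, since $D\ge1$ and $\bar{F}_{b}/F_{b}\ge0$. A secondary point to watch is that the third block is a product of two $w_{k}$-dependent quantities, so the comparison of $\partial\Phi/\partial\theta_{i}$ with $\partial\Phi/\partial\theta_{j}$ must be carried out block by block, using the monotonicity of $w^{2}[\tilde{r}_{b}(w)/r_{b}(w)]'$ and the sign of $\sum_{l}w_{l}g'(w_{l})$ simultaneously.
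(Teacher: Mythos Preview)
Your proposal is correct and follows essentially the same route as the paper's proof: both fix $x$, regard $\tilde{r}_{X_{n-1:n}}(x)$ as a function $\Phi(\boldsymbol{\theta})$ (the paper writes $\Psi_{3}$), compute $\partial\Phi/\partial\theta_{k}$, and split the difference $\partial\Phi/\partial\theta_{i}-\partial\Phi/\partial\theta_{j}$ into the same three blocks $T_{1},T_{2},T_{3}$ governed respectively by $w^{2}[w\tilde{r}_{b}(w)]'$, $w^{2}[w(\tilde{r}_{b}/r_{b})']'$ and $w^{2}(\tilde{r}_{b}/r_{b})'$ together with the sign of $\sum_{l}w_{l}g'(w_{l})$, then invoke Lemmas \ref{lem2.1a}/\ref{lem2.1b}. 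The only cosmetic difference is that the paper carries out the sign analysis on $\mathcal{D}_{+}$ while you do it on $\mathcal{E}_{+}$; also note that in your third block the factor $\sum_{l}w_{l}g'(w_{l})/D^{2}$ is actually independent of $k$, so the ``product of two $w_{k}$-dependent quantities'' remark is unnecessary---the difference there is simply $(N/D^{2})\bigl(C(w_{i})-C(w_{j})\bigr)$, which is handled exactly as you describe.
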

\begin{proof}
	Under the assumptions made, the reversed hazard rate function of $X_{n-1:n}$ can be written as
	\begin{equation}\label{eq3.10}
	 \tilde{r}_{X_{n-1:n}}(x)=\sum\limits_{i=1}^{n}\frac{1}{\theta_{i}}\tilde{r}_{b}\left(\frac{x-\lambda}{\theta_{i}}\right)+\left[\sum\limits_{i=1}^{n}\frac{1}{\theta_{i}}h^{'}\left(\frac{x-\lambda}{\theta_{i}}\right)\right]\left[\sum\limits_{i=1}^{n}h\left(\frac{x-\lambda}{\theta_{i}}\right)+1\right]^{-1},
	\end{equation}
	where $h(\frac{x-\lambda}{\theta_{i}})=\tilde{r}_{b}(\frac{x-\lambda}{\theta_{i}})/r_{b}(\frac{x-\lambda}{\theta_{i}}).$ Denote $\Psi_{3}(\bm{\theta})=\tilde{r}_{X_{n-1:n}}(x)$, where $\tilde{r}_{X_{n-1:n}}(x)$ is given by (\ref{eq3.10}).
	Differentiating $\Psi_{3}(\boldsymbol{\theta})$ partially with respect to $\theta_{i}$, for $i=1,\cdots,n$, we obtain
	\begin{align}
	\frac{\partial \Psi_{3}(\boldsymbol{\theta})}{\partial \theta_{i}}&=-\left[\frac{[w^2\tilde{r}_{b}(w)+w^3\tilde{r}^{'}_{b}(w)]_{w=\left(\frac{x-\lambda}{\theta_{i}}\right)}}{(x-{\lambda})^2}\right]-\left[\frac{\frac{1}{(x-{\lambda})^2}[w^2h'(w)+w^3h^{''}(w)]_{w=\left(\frac{x-\lambda}{\theta_{i}}\right)}}{\sum\limits_{i=1}^{n}h\left(\frac{x-\lambda}{\theta_{i}}\right)+1}\right]\nonumber\\
	 &~~~~+\left[\frac{\frac{1}{(x-\lambda)}[w^2h^{'}(w)]_{w=\left(\frac{x-\lambda}{\theta_{i}}\right)}\sum\limits_{i=1}^{n}\frac{1}{\theta_{i}}h^{'}\left(\frac{x-\lambda}{\theta_{i}}\right)}{\left[\sum\limits_{i=1}^{n}h\left(\frac{x-\lambda}{\theta_{i}}\right)+1\right]^2}\right].
	\end{align}
	According to Lemma \ref{lem2.1a} (\ref{lem2.1b}), in proving the result, it is required to show that $\Psi_3(\boldsymbol{\theta})$ is Schur-concave with respect to $\boldsymbol{\theta}\in\mathcal{D_+}~(or~\mathcal{E_+})$.
	Now, consider
	\begin{eqnarray}
	\frac{\partial \Psi_3(\boldsymbol{\theta})}{\partial \theta_i}-\frac{\partial \Psi_3(\boldsymbol{\theta})}{\partial \theta_j}\overset{sign}{=}T_1+T_2+T_3,
	\end{eqnarray}
	where
	\begin{eqnarray}
	 T_1&=&\left[\frac{[w^2[w\tilde{r}_{b}(w)]']_{w=\left(\frac{x-\lambda}{\theta_{j}}\right)}}{(x-{\lambda})^2}-\frac{[w^2[w\tilde{r}_{b}(w)]']_{w=\left(\frac{x-\lambda}{\theta_{i}}\right)}}{(x-{\lambda})^2}\right],\\
	 T_2&=&\left[\frac{[w^2[wh'(w)]']_{w=\left(\frac{x-\lambda}{\theta_{j}}\right)}}{(x-{\lambda})^2}-\frac{[w^2[wh'(w)]']_{w=\left(\frac{x-\lambda}{\theta_{i}}\right)}}{(x-{\lambda})^2}\right]\left[\sum\limits_{i=1}^{n}h\left(\frac{x-\lambda}{\theta_i}\right)+1\right]^{-1}~\mbox{and}\\
	 T_3&=&\left[\frac{[w^2h'(w)]_{w=\left(\frac{x-\lambda}{\theta_{i}}\right)}}{(x-{\lambda})}-\frac{[w^2h'(w)]_{w=\left(\frac{x-\lambda}{\theta_{j}}\right)}}{(x-{\lambda})}\right]\left[\sum\limits_{i=1}^{n}\frac{1}{\theta_{i}}h^{'}\left(\frac{x-\lambda}{\theta_{i}}\right)\right]
	\left[\sum\limits_{i=1}^{n}h\left(\frac{x-\lambda}{\theta_{i}}\right)+1\right]^{-2}.\nonumber\\
	\end{eqnarray}
	Consider the case that $\boldsymbol{\theta}\in \mathcal{D_+}.$ The proof for the other case is similar. For $1\le i\le j \le n,$ we have $\theta_i\ge \theta_j$ implies $\frac{x-\lambda}{\theta_{i}}\le \frac{x-\lambda}{\theta_{j}}$. It is assumed that $w^2[w\tilde{r}_{b}(w)]'$ is decreasing. Therefore, $-w^2[w\tilde{r}_{b}(w)]'|_{w=\frac{x-\lambda}{\theta_{i}}}\le -w^2[w\tilde{r}_{b}(w)]'|_{w=\frac{x-\lambda}{\theta_{j}}}$. Further, $[\tilde{r}_{b}(w)/r_{b}(w)]$ and $w^2[\tilde{r}_{b}(w)/r_{b}(w)]'$ are decreasing. As a result, $\frac{1}{(x-\lambda)^2}[w^2h'(w)]|_{w=\frac{x-\lambda}{\theta_{j}}}\le \frac{1}{(x-\lambda)^2}[w^2h'(w)]|_{w=\frac{x-\lambda}{\theta_{i}}}\le0$. Again, $w^2[w[\tilde{r}_{b}(w)/r_{b}(w)]']'$ is decreasing. So, $-\frac{1}{(x-\lambda)^2}[w^2[wh'(w)]']|_{w=\frac{x-\lambda}{\theta_{i}}}\le- \frac{1}{(x-\lambda)^2}[w^2[wh'(w)]']|_{w=\frac{x-\lambda}{\theta{j}}}$. Combining these inequalities, we obtain that the values of the terms $T_1,~T_2$ and $T_3$ are at most zero. Thus,
 	$$	\frac{\partial \Psi_3(\boldsymbol{\theta})}{\partial \theta_i}-\frac{\partial \Psi_3(\boldsymbol{\theta})}{\partial \theta_j}\leq 0, \text{ for each } \bm{\theta}\in\mathcal{D_+}.$$  Hence, the rest of the proof readily follows.
\end{proof}
\begin{remark}\label{r1}
	Let us consider the baseline distribution function as $F_{b}(x)=\frac{x}{x+1},~x>0.$ For this baseline distribution function, one can easily check that $w^2[w\tilde{r}_{b}(w)]'$, $[\tilde{r}_{b}(w)/r_{b}(w)]$, $w^2[\tilde{r}_{b}(w)/r_{b}(w)]'$ and $w^2[w[\tilde{r}_{b}(w)/r_{b}(w)]']'$ are decreasing. Thus, Theorem \ref{th3.4} can be applied for this baseline distribution.
\end{remark}
The following example provides an illustration of Theorem \ref{th3.4}.
\begin{example}\label{exe3.2}
	Let us consider two vectors $\boldsymbol{X}\sim \mathbb{ELS}(4,(2,5,9),1;\frac{x}{x+1})$ and
	$\boldsymbol{Y}\sim \mathbb{ELS}(4,(3,6,7)$
	$,1;\frac{x}{x+1})$, where $x>0.$ One can easily check that all the conditions of Theorem \ref{th3.4} are satisfied. Thus, $X_{2:3}\leq_{rh}Y_{2:3}.$
	We plot the graph of $[\tilde{r}_{X_{2:3}}(x)-\tilde{r}_{Y_{2:3}}(x)]$ as a function of $x$ in Figure $1b$. As expected, this function takes negative values for all $x>0$.
\end{example}

In the following theorem, we assume that the shape parameters are equal to $1.$ The scale parameters are equal but scaler valued. It is established that under some conditions, the majorized location parameter vector produces a $2$-out-of-$n$ system having smaller reversed hazard rate.
\begin{theorem}\label{th3.5}
	Let  $\boldsymbol{X}\sim \mathbb{ELS}(
	\boldsymbol{\lambda},\theta,\boldsymbol{\alpha};{F_{b}})$ and $\boldsymbol{Y}\sim \mathbb{ELS}(
	\boldsymbol{\mu},\theta,\boldsymbol{\beta};F_{b}),$ with  $\boldsymbol{\alpha}=\boldsymbol{\beta}=\boldsymbol{1}_{n}$ and $\boldsymbol{ \delta}=\boldsymbol{ \theta}=\theta \boldsymbol{1}_{n}$. Also, consider $\boldsymbol{\lambda},~\boldsymbol{\mu}\in\mathcal{D_+}~(or~\mathcal{E_+})$. Then,
	${\boldsymbol\lambda}\succeq^{m}{\boldsymbol\mu}\Rightarrow Y_{n-1:n}\leq_{rh}X_{n-1:n}$, provided $(C2)$ holds.
\end{theorem}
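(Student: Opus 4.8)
The plan is to establish that, under the reductions in force, the reversed hazard rate function of $X_{n-1:n}$, regarded as a function of the location vector $\boldsymbol{\lambda}$, is Schur-convex on $\mathcal{D_+}$ (respectively on $\mathcal{E_+}$); since $\boldsymbol{\lambda}\succeq^{m}\boldsymbol{\mu}$, this immediately yields $\tilde{r}_{X_{n-1:n}}(x)\ge \tilde{r}_{Y_{n-1:n}}(x)$ for every $x$, which is exactly $Y_{n-1:n}\leq_{rh}X_{n-1:n}$. Substituting $\boldsymbol{\alpha}=\boldsymbol{1}_{n}$ and $\boldsymbol{\delta}=\boldsymbol{\theta}=\theta\boldsymbol{1}_{n}$ into (\ref{eq3.9}), I would first write
\[
\Psi_{4}(\boldsymbol{\lambda}):=\tilde{r}_{X_{n-1:n}}(x)=\frac{1}{\theta}\sum_{i=1}^{n}\tilde{r}_{b}(w_{i})+\left[\frac{1}{\theta}\sum_{i=1}^{n}h'(w_{i})\right]\left[\sum_{i=1}^{n}h(w_{i})+1\right]^{-1},
\]
where $w_{i}=(x-\lambda_{i})/\theta$ and $h(w)=\tilde{r}_{b}(w)/r_{b}(w)$; observe that $h\ge 0$, so the denominator $B:=\sum_{i=1}^{n}h(w_{i})+1$ is at least $1$, hence positive on the range $x>\max_{k}\lambda_{k}$.

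Using $\partial w_{i}/\partial\lambda_{i}=-1/\theta$, a routine differentiation gives
\[
\frac{\partial\Psi_{4}(\boldsymbol{\lambda})}{\partial\lambda_{i}}=-\frac{1}{\theta^{2}}\tilde{r}_{b}'(w_{i})-\frac{1}{\theta^{2}}\,\frac{h''(w_{i})}{B}+\frac{1}{\theta^{2}}\,\frac{h'(w_{i})\sum_{k=1}^{n}h'(w_{k})}{B^{2}},
\]
so that, for $1\le i\le j\le n$,
\[
\frac{\partial\Psi_{4}(\boldsymbol{\lambda})}{\partial\lambda_{i}}-\frac{\partial\Psi_{4}(\boldsymbol{\lambda})}{\partial\lambda_{j}}=\frac{1}{\theta^{2}}\bigl[\tilde{r}_{b}'(w_{j})-\tilde{r}_{b}'(w_{i})\bigr]+\frac{1}{\theta^{2}}\,\frac{h''(w_{j})-h''(w_{i})}{B}+\frac{1}{\theta^{2}}\,\frac{\bigl[h'(w_{i})-h'(w_{j})\bigr]\sum_{k=1}^{n}h'(w_{k})}{B^{2}}.
\]

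Then I would read off the sign of this difference from condition (C2) and conclude through Lemma \ref{lem2.1a} (respectively Lemma \ref{lem2.1b}). On $\mathcal{D_+}$ one has $\lambda_{1}\ge\cdots\ge\lambda_{n}$, hence $w_{i}\le w_{j}$ whenever $i\le j$; then $\tilde{r}_{b}$ convex gives $\tilde{r}_{b}'(w_{j})-\tilde{r}_{b}'(w_{i})\ge 0$, the increasingness of $[\tilde{r}_{b}/r_{b}]''$ gives $h''(w_{j})-h''(w_{i})\ge 0$, and the fact that $\tilde{r}_{b}/r_{b}$ is decreasing and convex gives simultaneously $\sum_{k}h'(w_{k})\le 0$ and $h'(w_{i})-h'(w_{j})\le 0$, so the last numerator is a product of two nonpositive quantities, hence nonnegative. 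Therefore the displayed difference is $\ge 0$, i.e. $\partial\Psi_{4}/\partial\lambda_{k}$ is decreasing in $k$, and Lemma \ref{lem2.1a} yields the Schur-convexity of $\Psi_{4}$ on $\mathcal{D_+}$. On $\mathcal{E_+}$ the ordering of the $w_{i}$'s is reversed, the three terms change sign, the difference becomes $\le 0$, so $\partial\Psi_{4}/\partial\lambda_{k}$ is increasing in $k$, and Lemma \ref{lem2.1b} again gives Schur-convexity. In both cases $\boldsymbol{\lambda}\succeq^{m}\boldsymbol{\mu}$ forces $\Psi_{4}(\boldsymbol{\lambda})\ge\Psi_{4}(\boldsymbol{\mu})$, which is the assertion.

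The main obstacle is the last term of the derivative difference, $\bigl[h'(w_{i})-h'(w_{j})\bigr]\sum_{k}h'(w_{k})$: its sign cannot be settled from the monotonicity or the convexity of $\tilde{r}_{b}/r_{b}$ alone but needs both of them together — the decreasingness to make $\sum_{k}h'(w_{k})\le 0$ and the convexity to make $h'(w_{i})\le h'(w_{j})$ — so that the product of the two nonpositive factors comes out nonnegative; this is precisely why (C2) bundles those two hypotheses. Everything else is bookkeeping: checking that (\ref{eq3.9}) genuinely collapses to the displayed $\Psi_{4}$ when $\theta_{i}\equiv\theta$ and $\alpha_{i}\equiv 1$, and that $B>0$, which is immediate from $h\ge 0$.
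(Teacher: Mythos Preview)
Your proof is correct and follows essentially the same approach as the paper: write the reversed hazard rate of $X_{n-1:n}$ as $\Psi_{4}(\boldsymbol{\lambda})$, compute $\partial\Psi_{4}/\partial\lambda_{i}$ exactly as the paper does, and then establish Schur-convexity on $\mathcal{D_+}$ (respectively $\mathcal{E_+}$) via Lemmas \ref{lem2.1a}--\ref{lem2.1b}. In fact the paper merely records the partial derivative and then states that the Schur-convexity ``can be executed using a manner analogous to that of Theorem \ref{th3.4}''; your term-by-term sign analysis of the difference $\partial\Psi_{4}/\partial\lambda_{i}-\partial\Psi_{4}/\partial\lambda_{j}$ under (C2) supplies precisely those omitted details.
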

\begin{proof}
	Denote $\Psi_{4}(\bm{\lambda})=\tilde{r}_{X_{n-1:n}}(x)$, where the reversed hazard rate function of $X_{n-1:n}$ can be obtained from (\ref{eq3.9}). Differentiating $\Psi_{4}(\bm{\lambda})$ with respect to $\lambda_i$, $i=1,\cdots,n$ partially, we get
	\begin{align}
	\frac{\partial \Psi_{4}(\bm{\lambda})}{\partial \lambda_{i}}=&-\left[\frac{1}{{\theta}^2}\tilde{r}^{'}_{b}\left(\frac{x-\lambda_i}{\theta}\right)\right]-\left[	 \frac{1}{{\theta}^2}h^{''}\left(\frac{x-\lambda_{i}}{\theta}\right)\right]	 \left[\sum\limits\limits_{i=1}^{n}h\left(\frac{x-\lambda_{i}}{\theta}\right)+1\right]^{-1}\nonumber\\
	&+ \left[\frac{1}{\theta}h^{'}\left(\frac{x-\lambda_{i}}{\theta}\right)\sum\limits_{i=1}^{n}\frac{1}{\theta}h^{'}\left(\frac{x-\lambda_{i}}{\theta}\right)\right]\left[\sum\limits_{i=1}^{n}h\left(\frac{x-\lambda_{i}}{\theta}\right)+1\right]^{-2},
	\end{align}
		where $h\left(\frac{x-\lambda_{i}}{\theta}\right)=\tilde{r}_{b}\left(\frac{x-\lambda_{i}}{\theta}\right)/r_{b}\left(\frac{x-\lambda_{i}}{\theta}\right).$ To prove the stated result, it is sufficient to show that  $\Psi_{4}(\bm{\lambda})$ is Schur-convex with respect to $\boldsymbol{\lambda}\in\mathcal{D_+}~(or~\mathcal{E_+}).$ This can be executed using a manner analogous to that of Theorem \ref{th3.4}. Thus, it is omitted for the sake of conciseness.
\end{proof}

Next theorem states sufficient conditions for the comparison of the second-largest order statistics, when the scale parameters are ordered according to the weakly supermajorization order.
\begin{theorem}\label{th3.6}
	Assume that $\boldsymbol{X}\sim \mathbb{ELS}(
	\boldsymbol{\lambda},\boldsymbol{\theta},\boldsymbol{\alpha}; {F_{b}})$ and $\boldsymbol{Y}\sim \mathbb{ELS}(
	\boldsymbol{\mu},\boldsymbol{\delta},\boldsymbol{\beta}; F_{b}),$ with $\boldsymbol{\lambda}=\boldsymbol{\mu}=\lambda \boldsymbol{1}_{n}$ and $\boldsymbol{\alpha}=\boldsymbol{\beta}=\boldsymbol{1}_{n}$. Further, assume $ \boldsymbol{\theta},~\boldsymbol{\delta}\in\mathcal{D_+}~(or~\mathcal{E_+})$. Then,
	${\boldsymbol\theta}\succeq^{w}{\boldsymbol\delta}\Rightarrow X_{n-1:n}\leq_{rh}Y_{n-1:n}$, provided $(C3)$ holds.
\end{theorem}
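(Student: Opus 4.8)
The plan is to parallel the proof of Theorem \ref{th3.4}, adding one new ingredient: monotonicity of the reversed hazard rate function in each scale parameter. Under the present hypotheses $\boldsymbol{\lambda}=\boldsymbol{\mu}=\lambda\boldsymbol{1}_{n}$ and $\boldsymbol{\alpha}=\boldsymbol{\beta}=\boldsymbol{1}_{n}$, the reversed hazard rate of $X_{n-1:n}$ is exactly the function $\Psi_{3}(\boldsymbol{\theta})=\tilde{r}_{X_{n-1:n}}(x)$ of (\ref{eq3.10}), and likewise $\tilde{r}_{Y_{n-1:n}}(x)=\Psi_{3}(\boldsymbol{\delta})$; hence $X_{n-1:n}\leq_{rh}Y_{n-1:n}$ is equivalent to $\Psi_{3}(\boldsymbol{\theta})\leq\Psi_{3}(\boldsymbol{\delta})$ for every $x>\lambda$. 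Since the hypothesis is now the weak supermajorization $\boldsymbol{\theta}\succeq^{w}\boldsymbol{\delta}$, I would argue as in the proof of Theorem \ref{th3.1}: by Lemma \ref{lem2.1a} (resp.\ Lemma \ref{lem2.1b}) together with Theorem $A.8$ of \cite{marshall2010} applied to $-\Psi_{3}$, it suffices to show that, on $\mathcal{D_+}$ (resp.\ $\mathcal{E_+}$), the function $\Psi_{3}$ is (i) Schur-concave and (ii) increasing in each coordinate $\theta_{i}$. Then $\boldsymbol{\theta}\succeq^{w}\boldsymbol{\delta}$ forces $\Psi_{3}(\boldsymbol{\theta})\leq\Psi_{3}(\boldsymbol{\delta})$, which is the claim.

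For (i) nothing new is needed. The Schur-concavity of $\Psi_{3}$ on $\mathcal{D_+}$ (or $\mathcal{E_+}$) -- equivalently, the sign statement $\partial\Psi_{3}/\partial\theta_{i}-\partial\Psi_{3}/\partial\theta_{j}\le 0$ for $i\le j$ obtained from the decomposition $T_{1}+T_{2}+T_{3}$ -- was already established in the proof of Theorem \ref{th3.4} using only the four monotonicity assumptions listed in (C1), and these four are all among the six assumptions of (C3). Thus this step may simply be quoted verbatim.

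The substantive step is (ii). Differentiating (\ref{eq3.10}) with respect to $\theta_{i}$ expresses $\partial\Psi_{3}/\partial\theta_{i}$ as the sum of three terms: one proportional to $-(x-\lambda)^{-2}\,[w^{2}(w\tilde{r}_{b}(w))']_{w=(x-\lambda)/\theta_{i}}$, one proportional to $-(x-\lambda)^{-2}\,[w^{2}(wh'(w))']_{w=(x-\lambda)/\theta_{i}}\,\big(\sum_{k}h((x-\lambda)/\theta_{k})+1\big)^{-1}$, and one proportional to $+(x-\lambda)^{-1}\,[w^{2}h'(w)]_{w=(x-\lambda)/\theta_{i}}\,\big(\sum_{k}\theta_{k}^{-1}h'((x-\lambda)/\theta_{k})\big)\big(\sum_{k}h((x-\lambda)/\theta_{k})+1\big)^{-2}$, where $h=\tilde{r}_{b}/r_{b}=\bar F_{b}/F_{b}$. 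Here $x-\lambda>0$, $h>0$, and $h'\le 0$ since $\tilde{r}_{b}/r_{b}$ is decreasing (assumed in (C3), and automatic as $\bar F_{b}/F_{b}$ is decreasing). The two monotonicity conditions present in (C3) but not in (C1), namely that $w\tilde{r}_{b}(w)$ is decreasing and that $w[\tilde{r}_{b}(w)/r_{b}(w)]'$ is decreasing, are precisely what gives $[w\tilde{r}_{b}(w)]'\le 0$ and $[wh'(w)]'\le 0$, so the first two terms are nonnegative; and $h'\le 0$ makes the third term nonnegative. Hence $\partial\Psi_{3}/\partial\theta_{i}\ge 0$ for every $i$, i.e.\ $\Psi_{3}$ is increasing. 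Combining (i) and (ii) and invoking the weakly supermajorized version of the monotonicity lemma as above completes the proof; the case $\boldsymbol{\theta},\boldsymbol{\delta}\in\mathcal{E_+}$ is identical after the obvious reversal of inequalities, exactly as in Theorem \ref{th3.4}.

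The main difficulty is bookkeeping rather than depth. First, one must track the directions of $\succeq^{w}$ and of $\leq_{rh}$ through the equivalence $X_{n-1:n}\leq_{rh}Y_{n-1:n}\Leftrightarrow\Psi_{3}(\boldsymbol{\theta})\leq\Psi_{3}(\boldsymbol{\delta})$ carefully enough to see that the correct reduction is to \emph{increasing and Schur-concave} (not to the ``decreasing and Schur-convex'' pair that appeared in Theorem \ref{th3.1}). Second, one must check that the two extra hypotheses of (C3) beyond (C1) are exactly what is needed to force \emph{every} term of $\partial\Psi_{3}/\partial\theta_{i}$ to be nonnegative, so that the monotonicity step closes; this is the point where the proof could fail to go through with a weaker condition list, and it is where the strengthening from (C1) to (C3) earns its keep. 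Once these two points are settled, the remainder is a routine variant of the proof of Theorem \ref{th3.4}.
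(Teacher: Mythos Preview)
Your proposal is correct and follows essentially the same approach as the paper: both reduce the claim to showing that $\Psi_{3}(\boldsymbol{\theta})=\tilde{r}_{X_{n-1:n}}(x)$ is increasing and Schur-concave in $\boldsymbol{\theta}$ on $\mathcal{D_+}$ (or $\mathcal{E_+}$), obtain Schur-concavity by quoting the argument of Theorem~\ref{th3.4} (since the hypotheses of (C1) are contained in (C3)), and derive monotonicity from the sign of $\partial\Psi_{3}/\partial\theta_{i}$. In fact your write-up is more explicit than the paper's, which merely asserts that ``based on the given assumptions'' the partial derivative is nonnegative; your term-by-term check identifying precisely which two extra hypotheses of (C3) force each summand to be nonnegative is exactly the content behind that assertion.
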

\begin{proof}
	Under the assumed set-up, the reversed hazard rate function of $X_{n-1:n}$ can be written as
	\begin{equation}\label{eq3.18}
	 \Psi_5(\boldsymbol{\theta})\overset{def}{=}\tilde{r}_{X_{n-1:n}}(x)=\sum\limits_{i=1}^{n}\frac{1}{\theta_{i}}\tilde{r}_{b}\left(\frac{x-\lambda}{\theta_{i}}\right)+\left[\sum\limits_{i=1}^{n}\frac{1}{\theta_{i}}h^{'}\left(\frac{x-\lambda}{\theta_{i}}\right)\right]\left[\sum\limits_{i=1}^{n}h\left(\frac{x-\lambda}{\theta_{i}}\right)+1\right]^{-1},
	\end{equation}
	where $h(\frac{x-\lambda}{\theta_{i}})=\tilde{r}_{b}(\frac{x-\lambda}{\theta_{i}})/r_{b}(\frac{x-\lambda}{\theta_{i}}).$ The proof will be completed if we show that $\Psi_5(\boldsymbol{\theta})$ is increasing and Schur-concave with respect to $\boldsymbol{\theta}\in\mathcal{D_+}~(or~\mathcal{E_+}).$  Differentiating $\Psi_{5}(\boldsymbol{\theta})$ with respect to $\theta_{i}$, $i=1,\cdots,n$,  we have
	\begin{align}
	\frac{\partial \Psi_5(\boldsymbol{\theta})}{\partial \theta_{i}}&=-\left[\frac{[w^2\tilde{r}_{b}(w)+w^3\tilde{r}^{'}(w)]_{w=\left(\frac{x-\lambda}{\theta_{i}}\right)}}{(x-{\lambda})^2}\right]-\left[\frac{\frac{1}{(x-{\lambda})^2}[w^2h'(w)+w^3h^{''}(w)]_{w=\left(\frac{x-\lambda}{\theta_{i}}\right)}}{\left[\sum\limits_{i=1}^{n}h\left(\frac{x-\lambda}{\theta_{i}}\right)+1\right]}\right]\nonumber\\
	 &~~~~+\left[\frac{\frac{1}{(x-\lambda)}[w^2h^{'}(w)]_{w=\left(\frac{x-\lambda}{\theta_{i}}\right)}\sum\limits_{i=1}^{n}\frac{1}{\theta_{i}}h^{'}\left(\frac{x-\lambda}{\theta_{i}}\right)}{\left[\sum\limits_{i=1}^{n}h\left(\frac{x-\lambda}{\theta_{i}}\right)+1\right]^2}\right].
	\end{align}
	Based on the given assumptions, it can be shown that $\frac{\partial \Psi_{5}(\boldsymbol{\theta})}{\partial \theta_{i}}$ is at least zero.  This implies that $\Psi_{5}(\boldsymbol{\theta})$ is increasing with respect to $\theta_{i}$, $i=1,\cdots,n$.
	We omit the remaining details of the proof since it can be achieved using arguments similar to that of Theorem \ref{th3.4}.
\end{proof}

Similar to Corollary \ref{cor3.1}, we have the following corollary from the preceding theorem.

\begin{corollary}
Let $\boldsymbol{X}\sim \mathbb{ELS}(
\lambda,\boldsymbol{\theta},\boldsymbol{1}_{n}; {F_{b}})$ and $\boldsymbol{Y}\sim \mathbb{ELS}(
\lambda,\delta,\boldsymbol{1}_{n}; F_{b}).$ Further, let $ \boldsymbol{\theta}\in\mathcal{D_+}~(or~\mathcal{E_+})$ and $(C3)$ hold. Then, $n\delta\ge \sum_{i=1}^{n}\theta_i\Rightarrow X_{n-1:n}\leq_{rh}Y_{n-1:n}$.
\end{corollary}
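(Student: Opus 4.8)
The plan is to obtain this corollary as an immediate specialisation of Theorem~\ref{th3.6}, exactly in the spirit of how Corollary~\ref{cor3.1} was deduced from Theorem~\ref{th3.1}. The idea is to take the scale vector of $\boldsymbol Y$ to be the homogeneous vector $\delta\boldsymbol{1}_{n}$ and then show that the scalar hypothesis $n\delta\ge\sum_{i=1}^{n}\theta_i$ is nothing but a concrete sufficient condition for the weak supermajorization $\boldsymbol{\theta}\succeq^{w}\delta\boldsymbol{1}_{n}$ that Theorem~\ref{th3.6} requires.

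First I would check that the structural hypotheses of Theorem~\ref{th3.6} are met. Here $\boldsymbol{X}\sim\mathbb{ELS}(\lambda\boldsymbol{1}_{n},\boldsymbol{\theta},\boldsymbol{1}_{n};F_b)$ and $\boldsymbol{Y}\sim\mathbb{ELS}(\lambda\boldsymbol{1}_{n},\delta\boldsymbol{1}_{n},\boldsymbol{1}_{n};F_b)$, so $\boldsymbol{\lambda}=\boldsymbol{\mu}=\lambda\boldsymbol{1}_{n}$, $\boldsymbol{\alpha}=\boldsymbol{\beta}=\boldsymbol{1}_{n}$, and $(C3)$ is assumed. Moreover the constant vector $\delta\boldsymbol{1}_{n}$ lies in both $\mathcal{D_+}$ and $\mathcal{E_+}$ (the defining inequalities of $\mathcal{D_+}$ and $\mathcal{E_+}$ are not strict), so whichever of the two cases holds for $\boldsymbol{\theta}$, the pair $\boldsymbol{\theta},\ \delta\boldsymbol{1}_{n}$ satisfies the requirement $\boldsymbol{\theta},\boldsymbol{\delta}\in\mathcal{D_+}~(\text{or }\mathcal{E_+})$ of Theorem~\ref{th3.6}.

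Next I would verify the majorization condition. By Definition~\ref{definition2.2}, $\boldsymbol{\theta}\succeq^{w}\delta\boldsymbol{1}_{n}$ (that is, $\delta\boldsymbol{1}_{n}\preceq^{w}\boldsymbol{\theta}$) means $\sum_{i=1}^{k}\delta = k\delta \ge \sum_{i=1}^{k}\theta_{i:n}$ for every $k=1,\dots,n$, where $\theta_{1:n}\le\cdots\le\theta_{n:n}$ are the ordered scale parameters. Since the average of the $k$ smallest coordinates never exceeds the overall average, $\sum_{i=1}^{k}\theta_{i:n}\le \tfrac{k}{n}\sum_{i=1}^{n}\theta_i \le \tfrac{k}{n}(n\delta)=k\delta$, which is precisely the required family of partial-sum inequalities. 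Hence $\boldsymbol{\theta}\succeq^{w}\delta\boldsymbol{1}_{n}$, and Theorem~\ref{th3.6} applied with $\boldsymbol{\delta}=\delta\boldsymbol{1}_{n}$ gives $X_{n-1:n}\leq_{rh}Y_{n-1:n}$.

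There is no serious obstacle here: the only point needing care is the elementary majorization fact that the sum of the $k$ smallest coordinates of a positive vector is at most $k/n$ times its total, which is exactly what converts the single scalar inequality $n\delta\ge\sum_i\theta_i$ into the full set of inequalities defining $\preceq^{w}$. The rest is a direct invocation of the preceding theorem, and, as with Corollary~\ref{cor3.1}, the result yields a reversed-hazard-rate bound for a $2$-out-of-$n$ system with heterogeneous components in terms of one with homogeneous components.
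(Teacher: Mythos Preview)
Your proposal is correct and follows exactly the approach the paper intends: it specialises Theorem~\ref{th3.6} to the homogeneous scale vector $\delta\boldsymbol{1}_n$ and uses the elementary fact (also invoked before Corollary~\ref{cor3.1}) that $n\delta\ge\sum_{i=1}^{n}\theta_i$ yields $\boldsymbol{\theta}\succeq^{w}\delta\boldsymbol{1}_n$. Your added verification that the partial-sum inequalities hold via the averaging argument is a welcome explicit justification of that fact.
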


The following theorem provides the conditions, under which one can compare the reversed hazard rate functions of $X_{n-1:n}$ and $Y_{n-1:n}$, when  the reciprocal of the scale parameters of two sets of heterogeneous random lifetimes are connected according to the reciprocally majorization order. In this theorem, we consider that the shape parameters are the same and equal to $1$. The location parameters are taken to be equal but vector-valued.
\begin{theorem}\label{th3.7}
	Let  $\boldsymbol{X}\sim \mathbb{ELS}(
	\boldsymbol{\lambda},\boldsymbol{\theta},\boldsymbol{\alpha}; {F_{b}})$ and $\boldsymbol{Y}\sim \mathbb{ELS}(
	\boldsymbol{\mu},\boldsymbol{\delta},\boldsymbol{\beta}; F_{b}),$ with $\boldsymbol{\lambda}=\boldsymbol{\mu}$, $\boldsymbol{\alpha}=\boldsymbol{\beta}=\bm{1}_n$. Also, let $\boldsymbol{\lambda},~ \boldsymbol{\theta},~\boldsymbol{\delta}\in\mathcal{D_+}~(or~\mathcal{E_+})$. Then,
	$1/{\boldsymbol\theta}\succeq^{rm}1/{\boldsymbol\delta}\Rightarrow Y_{n-1:n}\leq_{rh}X_{n-1:n}$, provided $(C4)$ is satisfied.
\end{theorem}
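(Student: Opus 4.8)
The plan is to express the reversed hazard rate of $X_{n-1:n}$ as a function of its scale vector and then apply Lemma~\ref{lem2.1c}. Under the present hypotheses ($\boldsymbol{\alpha}=\boldsymbol{\beta}=\boldsymbol{1}_{n}$, $\boldsymbol{\lambda}=\boldsymbol{\mu}$), formula (\ref{eq3.9}) reads, with $h(w)=\tilde{r}_{b}(w)/r_{b}(w)=\bar{F}_{b}(w)/F_{b}(w)$,
\[
\Psi_{6}(\boldsymbol{\theta})\overset{def}{=}\tilde{r}_{X_{n-1:n}}(x)=\sum_{i=1}^{n}\frac{1}{\theta_{i}}\tilde{r}_{b}\left(\frac{x-\lambda_{i}}{\theta_{i}}\right)+\left[\sum_{i=1}^{n}\frac{1}{\theta_{i}}h'\left(\frac{x-\lambda_{i}}{\theta_{i}}\right)\right]\left[\sum_{i=1}^{n}h\left(\frac{x-\lambda_{i}}{\theta_{i}}\right)+1\right]^{-1},
\]
and likewise $\tilde{r}_{Y_{n-1:n}}(x)=\Psi_{6}(\boldsymbol{\delta})$; note that $h\ge 0$, $h$ is decreasing and $h'\le 0$ automatically. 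Now apply Lemma~\ref{lem2.1c} with $\boldsymbol{x}=1/\boldsymbol{\theta}$, $\boldsymbol{y}=1/\boldsymbol{\delta}$, the lemma's generic function being $\boldsymbol{u}\mapsto\Psi_{6}(1/u_{1},\ldots,1/u_{n})$. Then $a_{i}=1/x_{i}=\theta_{i}$ and $\Psi_{6}(1/(1/a_{1}),\ldots,1/(1/a_{n}))=\Psi_{6}(\boldsymbol{\theta})$, so the lemma's conditions (i)--(ii) become precisely ``$\Psi_{6}$ Schur-convex'' and ``$\Psi_{6}$ increasing in each $\theta_{i}$'', while its conclusion $\Psi_{6}(\boldsymbol{\theta})\ge\Psi_{6}(\boldsymbol{\delta})$ is exactly $Y_{n-1:n}\le_{rh}X_{n-1:n}$. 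Hence it suffices to prove, on $\mathcal{D_+}$ (resp.\ on $\mathcal{E_+}$), that (i) $\Psi_{6}(\boldsymbol{\theta})$ is increasing in each $\theta_{i}$, and (ii) $\Psi_{6}(\boldsymbol{\theta})$ is Schur-convex. I would give the details for $\boldsymbol{\lambda},\boldsymbol{\theta},\boldsymbol{\delta}\in\mathcal{D_+}$; the $\mathcal{E_+}$ case is symmetric, using Lemma~\ref{lem2.1b} in place of Lemma~\ref{lem2.1a} and reversing all coordinatewise inequalities.

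Next I would differentiate $\Psi_{6}$. Writing $w_{i}=(x-\lambda_{i})/\theta_{i}$, so $\partial w_{i}/\partial\theta_{i}=-w_{i}^{2}/(x-\lambda_{i})$, and using $w^{2}\tilde{r}_{b}(w)+w^{3}\tilde{r}_{b}'(w)=w^{2}[w\tilde{r}_{b}(w)]'$ together with $w^{2}h'(w)+w^{3}h''(w)=w^{2}[wh'(w)]'$, one obtains, exactly as in the proofs of Theorems~\ref{th3.4} and~\ref{th3.6},
\[
\frac{\partial \Psi_{6}(\boldsymbol{\theta})}{\partial \theta_{i}}=-\frac{[w^{2}[w\tilde{r}_{b}(w)]']_{w=w_{i}}}{(x-\lambda_{i})^{2}}-\frac{\frac{1}{(x-\lambda_{i})^{2}}[w^{2}[wh'(w)]']_{w=w_{i}}}{\sum_{k=1}^{n}h(w_{k})+1}+\frac{\frac{1}{x-\lambda_{i}}[w^{2}h'(w)]_{w=w_{i}}\sum_{k=1}^{n}\frac{1}{\theta_{k}}h'(w_{k})}{\left[\sum_{k=1}^{n}h(w_{k})+1\right]^{2}}.
\]
For (i): by (C4), $w\tilde{r}_{b}(w)$ and $w[\tilde{r}_{b}(w)/r_{b}(w)]'=wh'(w)$ are decreasing, hence $w^{2}[w\tilde{r}_{b}(w)]'\le 0$ and $w^{2}[wh'(w)]'\le 0$, which (with $h\ge 0$) makes the first two summands nonnegative; and $h'\le 0$ gives $[w^{2}h'(w)]_{w_{i}}\le 0$ and $\sum_{k}\frac{1}{\theta_{k}}h'(w_{k})\le 0$, so the third summand is nonnegative too. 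Thus $\partial\Psi_{6}/\partial\theta_{i}\ge 0$.

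For (ii), by Lemma~\ref{lem2.1a} it is enough to show $\partial\Psi_{6}/\partial\theta_{i}-\partial\Psi_{6}/\partial\theta_{j}\ge 0$ whenever $1\le i\le j\le n$. On $\mathcal{D_+}$ one has $\lambda_{i}\ge\lambda_{j}$ and $\theta_{i}\ge\theta_{j}$, hence $0<x-\lambda_{i}\le x-\lambda_{j}$ and $w_{i}\le w_{j}$; moreover $\sum_{k}h(w_{k})+1>0$ and $\sum_{k}\frac{1}{\theta_{k}}h'(w_{k})\le 0$ are common to $\partial\Psi_{6}/\partial\theta_{i}$ and $\partial\Psi_{6}/\partial\theta_{j}$ and factor out of the difference. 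Subtracting termwise, the first two contributions are, up to a positive factor, of the form $-g(w_{i})/(x-\lambda_{i})^{2}+g(w_{j})/(x-\lambda_{j})^{2}$ with $g=w^{2}[w\tilde{r}_{b}(w)]'$ and $g=w^{2}[wh'(w)]'$ respectively, and the third is a nonpositive multiple of $[w^{2}h'(w)]_{w_{i}}/(x-\lambda_{i})-[w^{2}h'(w)]_{w_{j}}/(x-\lambda_{j})$. Since $w^{2}[w\tilde{r}_{b}(w)]'$, $w^{2}[w[\tilde{r}_{b}(w)/r_{b}(w)]']'$ and $w^{2}[\tilde{r}_{b}(w)/r_{b}(w)]'$ are increasing by (C4) and $w_{i}\le w_{j}$, each of these quantities at $w_{i}$ is at most its value at $w_{j}$; combined with the signs from (i), in each contribution we face numbers $A\le B\le 0$ and denominators $0<a=x-\lambda_{i}\le b=x-\lambda_{j}$, and an elementary computation gives $-A/a^{2}+B/b^{2}\ge 0$ and $A/a-B/b\le 0$ (so the stated nonpositive multiple is $\ge 0$). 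Therefore $\partial\Psi_{6}/\partial\theta_{i}-\partial\Psi_{6}/\partial\theta_{j}\ge 0$, i.e.\ $\Psi_{6}$ is Schur-convex on $\mathcal{D_+}$, and (i)--(ii) with Lemma~\ref{lem2.1c} finish the proof.

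I expect the main obstacle to be precisely the sign bookkeeping in step (ii). Unlike Theorem~\ref{th3.4}, here $\boldsymbol{\lambda}$ is vector-valued, so the denominators $(x-\lambda_{i})^{2}$ and $(x-\lambda_{j})^{2}$ no longer coincide and must be handled jointly with the monotonicity of the composites $w^{2}[w\tilde{r}_{b}(w)]'$, $w^{2}[wh'(w)]'$ and $w^{2}h'(w)$. The hypotheses $\boldsymbol{\lambda}\in\mathcal{D_+}$ (giving $x-\lambda_{i}\le x-\lambda_{j}$) and $\boldsymbol{\theta}\in\mathcal{D_+}$ (giving $w_{i}\le w_{j}$) are exactly what forces all these comparisons to point the same way; this is also why the last three entries of (C4) are imposed as increasing, in contrast with the corresponding (decreasing) entries of (C3) for the weak-supermajorization statement of Theorem~\ref{th3.6}.
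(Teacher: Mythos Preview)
Your proposal is correct and follows essentially the same route as the paper's proof: both reduce the claim, via Lemma~\ref{lem2.1c}, to showing that $\boldsymbol{\theta}\mapsto\tilde{r}_{X_{n-1:n}}(x)$ is increasing in each coordinate and Schur-convex on $\mathcal{D_+}$ (resp.\ $\mathcal{E_+}$), and both verify this by computing the partial derivative (\ref{eq3.21}) and checking signs under (C4). The paper omits the termwise sign analysis for Schur-convexity, merely pointing to Theorem~\ref{th3.4}; you supply those details, including the correct handling of the unequal denominators $(x-\lambda_i)^2$ and $(x-\lambda_j)^2$ arising from the vector-valued $\boldsymbol{\lambda}$, which is precisely where (C4) differs from (C1) and (C3).
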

\begin{proof}
	Based on the given assumptions, the reversed hazard rate function of $X_{n-1:n}$ can be written as follows
	\begin{equation}
	 \Psi_6(1/\boldsymbol{\theta})\overset{def}{=}\tilde{r}_{X_{n-1:n}}(x)=\sum\limits_{i=1}^{n}\frac{1}{\theta_{i}}\tilde{r}_{b}\left(\frac{x-\lambda_{i}}{\theta_{i}}\right)+\left[\sum\limits_{i=1}^{n}\frac{1}{\theta_{i}}h^{'}\left(\frac{x-\lambda_{i}}{\theta_{i}}\right)\right]\left[\sum\limits_{i=1}^{n}h\left(\frac{x-\lambda_{i}}{\theta_{i}}\right)+1\right]^{-1},
	\end{equation}
	where $h(\frac{x-\lambda_{i}}{\theta_{i}})=\tilde{r}_{b}(\frac{x-\lambda_{i}}{\theta_{i}})/r_{b}(\frac{x-\lambda_{i}}{\theta_{i}}),$ for $i=1,\cdots,n.$ The partial derivative of $\Psi_6(1/\boldsymbol{\theta})$ with respect to $\theta_i$, for $i=1,\cdots,n,$ is given by
	\begin{align}\label{eq3.21}
	\frac{\partial \Psi_6(1/\boldsymbol{\theta})}{\partial \theta_{i}}&=-\left[\frac{[w^2\tilde{r}_{b}(w)+w^3\tilde{r}^{'}(w)]_{\left(\frac{x-\lambda_i}{\theta_{i}}\right)}}{(x-{\lambda_i})^2}\right]-\left[\frac{\frac{1}{(x-{\lambda_i})^2}[w^2h'(w)+w^3h^{''}(w)]_{w=\left(\frac{x-\lambda_{i}}{\theta_{i}}\right)}}{\left[\sum\limits_{i=1}^{n}h\left(\frac{x-\lambda_{i}}{\theta_{i}}\right)+1\right]}\right]\nonumber\\
	 &~~~~+\left[\frac{\frac{1}{(x-\lambda_{i})}[w^2h^{'}(w)]_{w=\left(\frac{x-\lambda_{i}}{\theta_{i}}\right)}\sum\limits_{i=1}^{n}\frac{1}{\theta_{i}}h^{'}\left(\frac{x-\lambda_{i}}{\theta_{i}}\right)}{\left[\sum\limits_{i=1}^{n}h\left(\frac{x-\lambda_{i}}{\theta_{i}}\right)+1\right]^2}\right].
	\end{align}
	Using Lemma \ref{lem2.1a} ( Lemma \ref{lem2.1b}) and Lemma \ref{lem2.1c}, we have to show that $\Psi_6(1/\boldsymbol{\theta})$ is increasing and Schur-convex with respect to $\boldsymbol{\theta}\in\mathcal{D_+}~(or~\mathcal{E_+}).$ Under the assumptions made, clearly, $\Psi_6(1/\boldsymbol{\theta})$ is increasing, since the derivative given by (\ref{eq3.21}) is nonnegative. Further, Schur-convexity of $\Psi_6(1/\boldsymbol{\theta})$ can be shown using the arguments similar to Theorem \ref{th3.4}. The details have been omitted. This completes the result.
\end{proof}

Note that $(\frac{1}{\theta_1},\cdots,\frac{1}{\theta_n})\succeq^{rm} (\frac{1}{\delta},\cdots,\frac{1}{\delta})$ holds for $n\delta\le \sum_{i=1}^{n}\theta_i$. Thus, we have the following corollary, which is an immediate consequence of Theorem \ref{th3.7}.
\begin{corollary}
	Let  $\boldsymbol{X}\sim \mathbb{ELS}(
	\boldsymbol{\lambda},\boldsymbol{\theta},\boldsymbol{\alpha}; {F_{b}})$ and $\boldsymbol{Y}\sim \mathbb{ELS}(
	\boldsymbol{\mu},\delta,\boldsymbol{\beta}; F_{b}),$ with $\boldsymbol{\lambda}=\boldsymbol{\mu}$, $\boldsymbol{\alpha}=\boldsymbol{\beta}=\bm{1}_n$. Also, let $\boldsymbol{\lambda},~ \boldsymbol{\theta}\in\mathcal{D_+}~(or~\mathcal{E_+})$. Then,
	$n\delta \le \sum_{i=1}^{n}\theta_i\Rightarrow Y_{n-1:n}\leq_{rh}X_{n-1:n}$, provided $(C4)$ holds.
\end{corollary}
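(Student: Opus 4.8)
The plan is to read this off from Theorem \ref{th3.7}, the only substantive point being to check that the constant scale vector $(1/\delta)\bm{1}_{n}$ is dominated by $1/\boldsymbol{\theta}$ in the reciprocal majorization order exactly when $n\delta\le\sum_{i=1}^{n}\theta_i$. So first I would instantiate Theorem \ref{th3.7} with $\boldsymbol{\mu}=\boldsymbol{\lambda}$, $\boldsymbol{\alpha}=\boldsymbol{\beta}=\bm{1}_{n}$ and $\boldsymbol{\delta}=\delta\bm{1}_{n}$. Since the constant vector $\delta\bm{1}_{n}$ lies in $\mathcal{D_+}$ and in $\mathcal{E_+}$ simultaneously (the defining inequalities are non-strict), the membership requirement of Theorem \ref{th3.7}, namely $\boldsymbol{\lambda},~\boldsymbol{\theta},~\boldsymbol{\delta}\in\mathcal{D_+}~(or~\mathcal{E_+})$, reduces to $\boldsymbol{\lambda},~\boldsymbol{\theta}\in\mathcal{D_+}~(or~\mathcal{E_+})$, which is part of the hypothesis; condition $(C4)$ is assumed as well. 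Hence the corollary will follow once $1/\boldsymbol{\theta}\succeq^{rm}(1/\delta)\bm{1}_{n}$ is verified under $n\delta\le\sum_{i=1}^{n}\theta_i$.

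Next I would translate $1/\boldsymbol{\theta}\succeq^{rm}(1/\delta)\bm{1}_{n}$ into an elementary statement. By Definition \ref{definition2.2} this relation says that, for every $k=1,\dots,n$, the sum of the reciprocals of the $k$ smallest coordinates of $(1/\delta)\bm{1}_{n}$ is at most the sum of the reciprocals of the $k$ smallest coordinates of $1/\boldsymbol{\theta}$. The former sum is $k\delta$. For the latter, the $i$th smallest coordinate of $1/\boldsymbol{\theta}$ is the reciprocal of the $i$th largest among $\theta_{1},\dots,\theta_{n}$, so the sum in question is precisely the sum of the $k$ largest of the $\theta_{i}$. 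Hence $1/\boldsymbol{\theta}\succeq^{rm}(1/\delta)\bm{1}_{n}$ is equivalent to the requirement that, for each $k$, $k\delta$ does not exceed the sum of the $k$ largest $\theta_{i}$.

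To finish I would argue from $n\delta\le\sum_{i=1}^{n}\theta_i$: dividing by $n$ gives $\delta\le\frac1n\sum_{i=1}^{n}\theta_i$, and for any finite list the average of its $k$ largest entries is at least the average of all $n$ entries (appending smaller entries to a larger block cannot raise its average), so the sum of the $k$ largest $\theta_i$ is at least $\frac{k}{n}\sum_{i=1}^{n}\theta_i\ge k\delta$, for every $k$. This establishes $1/\boldsymbol{\theta}\succeq^{rm}(1/\delta)\bm{1}_{n}$, and Theorem \ref{th3.7} then delivers $Y_{n-1:n}\leq_{rh}X_{n-1:n}$, which is the assertion of the corollary.

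There is no genuinely hard step here; the one place to be careful is the coordinate bookkeeping in the middle paragraph, where inverting the order statistics of $1/\boldsymbol{\theta}$ turns ``$k$ smallest of $1/\boldsymbol{\theta}$'' into ``$k$ largest of $\boldsymbol{\theta}$'', together with the observation that the constant vector $\delta\bm{1}_{n}$ simultaneously belongs to $\mathcal{D_+}$ and $\mathcal{E_+}$, so that the ``$\mathcal{D_+}$ or $\mathcal{E_+}$'' dichotomy in Theorem \ref{th3.7} causes no mismatch with the ambient assumption on $\boldsymbol{\lambda}$ and $\boldsymbol{\theta}$. Once these are in place, the remaining inequality is just the elementary fact that a top-$k$ partial sum dominates the corresponding fraction of the total.
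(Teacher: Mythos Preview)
Your proposal is correct and takes exactly the paper's route: the paper simply asserts that $(1/\theta_1,\ldots,1/\theta_n)\succeq^{rm}(1/\delta,\ldots,1/\delta)$ whenever $n\delta\le\sum_{i=1}^{n}\theta_i$ and then invokes Theorem~\ref{th3.7}. You supply the elementary verification of that assertion (the top-$k$ partial-sum argument), which the paper omits, but the overall strategy is identical.
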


\subsection{Dependent lifetimes}
In the preceding subsection, we have considered $2$-out-of-$n$ systems having independent components' lifetimes. However, the components' lifetimes of a system may be dependent due to various factors. It can happen that the components have been produced by same company. So, naturally, there is a chance that the components are dependent. In this subsection, we consider two sets of exponentiated location-scale distributed random lifetimes associated with Archimedean copulas. Let $\bm{X}$ and $\bm{Y}$ be two vectors of $n$ dependent lifetimes such that $\bm{X}\sim \mathbb{ELS}(\bm{\lambda},\bm{\theta},\bm{\alpha};F_b,\psi_{1})$ and  $\bm{Y}\sim \mathbb{ELS}(\bm{\mu},\bm{\delta},\bm{\beta};F_b,\psi_{2})$, where $X_{i}\sim\mathbb{ELS}(\lambda_i,\theta_i,\alpha_i;F_b,\psi_{1})$ and  $Y_{i}\sim \mathbb{ELS}(\mu_i,\delta_i,\beta_i;F_b,\psi_{2})$, for $i=1,\cdots,n.$ We recall that $F_b$ is the baseline distribution function. The distribution functions of $X_{n-1:n}$ and $Y_{n-1:n}$ are respectively given by
\begin{equation}
{F}_{X_{n-1:n}}(x)=\sum\limits_{l=1}^{n}\psi_{1}\left[\sum\limits_{k\neq l}^{n}\phi_{1}\left\{\left[F_{b}\left(\frac{x-\lambda_k}{\theta_k}\right)\right]^{\alpha_k}\right\}\right]-(n-1)\psi_{1}\left[\sum\limits_{k=1}^{n}\phi_{1}\left\{\left[F_{b}\left(\frac{x-\lambda_k}{\theta_k}\right)\right]^{\alpha_k}\right\}\right],
\end{equation}
where $x>\max\{\lambda_k, \forall~ k\}$ and
\begin{equation}
{G}_{Y_{n-1:n}}(x)=\sum\limits_{l=1}^{n}\psi_{2}\left[\sum\limits_{k\neq l}^{n}\phi_{2}\left\{\left[F_{b}\left(\frac{x-\mu_k}{\delta_k}\right)\right]^{\beta_k}\right\}\right]-(n-1)\psi_{2}\left[\sum\limits_{k=1}^{n}\phi_{2}\left\{\left[F_{b}\left(\frac{x-\mu_k}{\delta_k}\right)\right]^{\beta_k}\right\}\right],
\end{equation}
where $x>\max\{\mu_k,\forall ~k\}.$ Now, we present sufficient conditions, for which the usual stochastic order holds between the second-largest order statistics arising from two sets of heterogeneous dependent samples under the assumption that the dependency structure is modeled by Archimedean copulas. In this regard, the following two lemmas are useful.
\begin{lemma}(\cite{li2015ordering})\label{lem3.1}
	For two $n$-dimensional Archimedean copulas $C_{\psi_1}$ and  $C_{\psi_2}$, if $\phi_2\circ\psi_1$ is super-additive, then $C_{\psi_1}(\boldsymbol{v})\leq C_{\psi_2}(\boldsymbol{v})$, for all $\boldsymbol{v}\in[0,1]^n.$ A function $f$ is said to be super-additive, if $ f(x)+f(y)\leq f(x+y),$ for all $x$ and $y$ in the domain of $f.$
\end{lemma}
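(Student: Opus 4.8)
The plan is to transfer the claim from the copulas to their generators, where it reduces to a direct consequence of the super-additivity hypothesis. Recall that for $j=1,2$ one has $C_{\psi_j}(\boldsymbol v)=\psi_j\big(\sum_{i=1}^n\phi_j(v_i)\big)$ with $\phi_j=\psi_j^{-1}$. Fix $\boldsymbol v\in[0,1]^n$ (the boundary cases where some $v_i\in\{0,1\}$ are trivial, both copulas then agreeing) and put $u_i:=\phi_1(v_i)\in[0,\infty)$. Since $\psi_1$ is continuous, $\psi_1\circ\phi_1=\mathrm{id}$ on $[0,1]$, so $v_i=\psi_1(u_i)$, and directly from the definition $C_{\psi_1}(\boldsymbol v)=\psi_1\big(\sum_{i=1}^n u_i\big)$. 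Writing $g:=\phi_2\circ\psi_1$, the argument of $C_{\psi_2}$ rewrites as $\sum_{i=1}^n\phi_2(v_i)=\sum_{i=1}^n\phi_2(\psi_1(u_i))=\sum_{i=1}^n g(u_i)$.

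The next step is to upgrade the two-term hypothesis $g(x)+g(y)\le g(x+y)$ to its $n$-fold version $\sum_{i=1}^n g(u_i)\le g\big(\sum_{i=1}^n u_i\big)$ by induction on $n$: the inductive step is $g(u_1)+\sum_{i\ge2}g(u_i)\le g(u_1)+g\big(\sum_{i\ge2}u_i\big)\le g\big(\sum_{i=1}^n u_i\big)$, where the first inequality is the induction hypothesis and the second is super-additivity (the base case $n=1$ being trivial, and $g(0)=\phi_2(\psi_1(0))=\phi_2(1)=0$ ruling out any degeneracy). Combining with the previous paragraph,
\[
\sum_{i=1}^n\phi_2(v_i)\ \le\ g\Big(\sum_{i=1}^n u_i\Big)\ =\ \phi_2\Big(\psi_1\big(\textstyle\sum_{i=1}^n u_i\big)\Big).
\]

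Finally, I would apply $\psi_2$ to both sides; since $\psi_2$ is nonincreasing the inequality reverses, and using $\psi_2\circ\phi_2=\mathrm{id}$ on $[0,1]$ (valid by continuity of $\psi_2$, and applicable because $\psi_1(\sum_i u_i)\in[0,1]$) one gets
\[
C_{\psi_2}(\boldsymbol v)=\psi_2\Big(\sum_{i=1}^n\phi_2(v_i)\Big)\ \ge\ \psi_2\Big(\phi_2\big(\psi_1(\textstyle\sum_{i=1}^n u_i)\big)\Big)=\psi_1\Big(\sum_{i=1}^n u_i\Big)=C_{\psi_1}(\boldsymbol v),
\]
which is the assertion. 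The only genuinely delicate point — and the nearest thing to an obstacle — is the bookkeeping with the generalized (right-continuous) inverses $\phi_j=\psi_j^{-1}$: one must ensure that the identities $\psi_j\circ\phi_j=\mathrm{id}$ and the monotonicity step are invoked only on the ranges actually attained (namely $[0,1]$), which is legitimate because the generators are continuous; everything else is routine.
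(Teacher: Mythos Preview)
Your argument is correct. Note, however, that the paper does not supply its own proof of this lemma: it is quoted from \cite{li2015ordering} and stated without proof (the companion Lemma~\ref{lem3.2} is likewise declared ``straightforward'' and omitted). Your write-up is the standard derivation one would expect---reduce to the generators, extend super-additivity of $g=\phi_2\circ\psi_1$ from two to $n$ summands by induction, then apply the nonincreasing $\psi_2$---and your attention to the generalized-inverse bookkeeping (that $\psi_j\circ\phi_j=\mathrm{id}$ only on $[0,1]$, and that the arguments actually land there) is exactly the point that needs care.
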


\begin{lemma}\label{lem3.2}
	For two $n$-dimensional Archimedean copulas $C_{\psi_1}$ and  $C_{\psi_2}$, if $\phi_2\circ\psi_1$ is sub-additive, then $C_{\psi_2}(\boldsymbol{v})\leq C_{\psi_1}(\boldsymbol{v})$, for all $\boldsymbol{v}\in[0,1]^n.$ A function $f$ is said to be sub-additive, if $f(x+y)\le f(x)+f(y),$ for all $x$ and $y$ in the domain of $f.$
\end{lemma}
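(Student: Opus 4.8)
The plan is to deduce this statement directly from Lemma \ref{lem3.1} by passing to inverses, since sub-additivity of $\phi_2\circ\psi_1$ should correspond to super-additivity of the inverse map $\phi_1\circ\psi_2$. First I would recall that, because $\psi_1,\psi_2$ are continuous and strictly decreasing on their supports with $\phi_i=\psi_i^{-1}$, the composition $g:=\phi_2\circ\psi_1$ is a continuous, strictly decreasing bijection (between the appropriate intervals) whose inverse is exactly $g^{-1}=\phi_1\circ\psi_2$. So the only real content is the elementary fact that a decreasing bijection is sub-additive if and only if its inverse is super-additive.

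The key steps, in order, are as follows. (i) Set $g=\phi_2\circ\psi_1$ and observe $g$ is decreasing with inverse $h=\phi_1\circ\psi_2$, also decreasing. (ii) Prove the elementary lemma: if $g$ is a decreasing bijection that is sub-additive, i.e. $g(x+y)\le g(x)+g(y)$, then $h=g^{-1}$ is super-additive, i.e. $h(u)+h(v)\le h(u+v)$. To see this, put $x=h(u)$, $y=h(v)$, so $u=g(x)$, $v=g(y)$; sub-additivity gives $g(x+y)\le g(x)+g(y)=u+v$; applying the decreasing map $h$ reverses the inequality to $x+y\ge h(u+v)$, i.e. $h(u)+h(v)\ge h(u+v)$ — wait, this is the opposite direction, so I would instead start from the super-additive conclusion and run the argument the other way, or more cleanly derive it by contradiction: if $h(u)+h(v)<h(u+v)$ for some $u,v$, apply $g$ (decreasing) to get $g(h(u)+h(v))>g(h(u+v))=u+v$; but sub-additivity of $g$ applied to $x=h(u)$, $y=h(v)$ gives $g(h(u)+h(v))\le g(h(u))+g(h(v))=u+v$, a contradiction. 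Hence $\phi_1\circ\psi_2$ is super-additive. (iii) Apply Lemma \ref{lem3.1} with the roles of $\psi_1$ and $\psi_2$ interchanged: since $\phi_1\circ\psi_2$ is super-additive, it yields $C_{\psi_2}(\boldsymbol v)\le C_{\psi_1}(\boldsymbol v)$ for all $\boldsymbol v\in[0,1]^n$, which is precisely the claim.

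The main obstacle is a bookkeeping one rather than a deep one: one must be careful about the domains and ranges of $\psi_i$ and $\phi_i$ (the generators are defined on $[0,\infty)$ with range $[0,1]$, and the inverses $\phi_i$ on $[0,1]$ with range $[0,\infty)$, using the right-continuous inverse convention stated in the Copula subsection), so that the compositions $\phi_2\circ\psi_1$ and $\phi_1\circ\psi_2$ are genuinely mutually inverse on the relevant sets and the sub-/super-additivity inequalities are being tested on admissible arguments. Once the monotone-inverse duality is set up cleanly, invoking Lemma \ref{lem3.1} with swapped indices finishes the proof immediately.
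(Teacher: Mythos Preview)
Your reduction strategy---deduce Lemma~\ref{lem3.2} from Lemma~\ref{lem3.1} by showing that sub-additivity of $g=\phi_2\circ\psi_1$ implies super-additivity of its inverse $h=\phi_1\circ\psi_2$, then swap the indices in Lemma~\ref{lem3.1}---is sound, but you have the monotonicity of $g$ backwards. Since $\psi_1:[0,\infty)\to[0,1]$ and $\phi_2:[0,1]\to[0,\infty)$ are both decreasing, their composition $g$ is \emph{increasing} on $[0,\infty)$ with $g(0)=\phi_2(1)=0$; likewise $h=g^{-1}$ is increasing. This is precisely why your first direct computation appeared to produce the wrong inequality and why you switched to the contradiction argument---which, as you wrote it, genuinely relies on $g$ being decreasing and therefore does not go through.

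With the monotonicity corrected, your abandoned first attempt succeeds immediately: taking $x=h(u)$, $y=h(v)$, sub-additivity yields $g(x+y)\le u+v$, and applying the \emph{increasing} map $h$ preserves the inequality, giving $h(u)+h(v)=x+y\le h(u+v)$, i.e.\ $\phi_1\circ\psi_2$ is super-additive; Lemma~\ref{lem3.1} with the roles of $\psi_1,\psi_2$ interchanged then gives $C_{\psi_2}(\boldsymbol v)\le C_{\psi_1}(\boldsymbol v)$. The paper itself omits the proof as ``straightforward,'' so there is no detailed argument to compare against; an equally short direct route avoiding the inversion step is to apply sub-additivity of $\phi_2\circ\psi_1$ inductively to obtain $\phi_2\bigl(\psi_1\bigl(\sum_k\phi_1(v_k)\bigr)\bigr)\le\sum_k\phi_2(\psi_1(\phi_1(v_k)))=\sum_k\phi_2(v_k)$ and then apply the decreasing map $\psi_2$ to conclude $C_{\psi_1}(\boldsymbol v)\ge C_{\psi_2}(\boldsymbol v)$.
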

\begin{proof}
	The proof is straightforward, and hence it is omitted.
\end{proof}

The following theorem states that if the scale parameters are connected with the weak supermajorization order, then under some conditions, there exists usual stochastic order between the second-largest order statistics. Here, we assume that the shape and scale parameters are the same and fixed.
\begin{theorem}\label{th3.8}
	Let  $\boldsymbol{X}\sim \mathbb{ELS}(
	\boldsymbol{\lambda},\boldsymbol{\theta},\boldsymbol{\alpha};F_{b},\psi_1)$ and $\boldsymbol{Y}\sim \mathbb{ELS}(
	\boldsymbol{\mu},\boldsymbol{\delta},\boldsymbol{\beta};F_{b},\psi_2),$ with $\boldsymbol{\alpha}=\boldsymbol{\beta}=\alpha \boldsymbol{1}_n$ and $\boldsymbol{\mu}=\boldsymbol{\lambda}=\lambda \boldsymbol{ 1}_{n}$. Let $\phi_1=\psi^{-1}_1$, $\phi_2=\psi^{-1}_2$ and  $\psi_1$ or $\psi_2$ be log-concave.
	Further, assume $ \boldsymbol{\theta},~\boldsymbol{\delta}\in\mathcal{E}_+~(or~\mathcal{D_+})$ and $w^2 \tilde{r}_{b}(w)$  is increasing in $w>0$. Then,
	\begin{itemize}
		\item[(i)]  ${{\boldsymbol\theta}}\succeq^{w}{{\boldsymbol\delta}}\Rightarrow X_{n-1:n}\leq_{st}Y_{n-1:n}$, provided $\phi_2\circ\psi_1$ is sub-additive;
		\item[(ii)] ${{\boldsymbol\delta}}\succeq^{w}{{\boldsymbol\theta}}\Rightarrow Y_{n-1:n}\leq_{st}X_{n-1:n}$, provided $\phi_2\circ\psi_1$ is super-additive.
	\end{itemize}
\end{theorem}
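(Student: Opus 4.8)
The plan is to combine the independent-case result (Theorem~\ref{th3.1}) with the copula comparison Lemmas~\ref{lem3.1} and~\ref{lem3.2}, using a two-step sandwich argument. Fix the common parameters $\bm{\lambda}=\lambda\bm{1}_n$ and $\bm{\alpha}=\alpha\bm{1}_n$ throughout. First I would introduce an auxiliary random vector $\bm{Z}$ with the same marginals as $\bm{Y}$ but coupled through $\psi_1$ instead of $\psi_2$; that is, $\bm{Z}\sim\mathbb{ELS}(\lambda\bm{1}_n,\bm{\delta},\alpha\bm{1}_n;F_b,\psi_1)$. The point of $\bm{Z}$ is that $X_{n-1:n}$ and $Z_{n-1:n}$ are built from the \emph{same} Archimedean generator $\psi_1$, so only the scale parameters differ, while $Z_{n-1:n}$ and $Y_{n-1:n}$ have the \emph{same} scale parameters and differ only in the generator.

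For part (i), the first step is to show $X_{n-1:n}\le_{st}Z_{n-1:n}$ under $\bm\theta\succeq^{w}\bm\delta$. Writing the distribution function of the second-largest order statistic in the Archimedean form
\begin{equation*}
F_{X_{n-1:n}}(x)=\sum_{l=1}^{n}\psi_1\!\left[\sum_{k\neq l}^{n}\phi_1\!\left\{\left[F_b\!\left(\tfrac{x-\lambda}{\theta_k}\right)\right]^{\alpha}\right\}\right]-(n-1)\,\psi_1\!\left[\sum_{k=1}^{n}\phi_1\!\left\{\left[F_b\!\left(\tfrac{x-\lambda}{\theta_k}\right)\right]^{\alpha}\right\}\right],
\end{equation*}
I would set $\Phi_1(\bm\theta)=F_{X_{n-1:n}}(x)$ and, exactly as in Theorem~\ref{th3.1}, invoke Lemma~\ref{lem2.1b} (for $\mathcal{E}_+$; Lemma~\ref{lem2.1a} for $\mathcal{D}_+$): it suffices to check that $\Phi_1$ is decreasing in each $\theta_i$ and Schur-convex on $\mathcal{E}_+$. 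The partial derivative $\partial\Phi_1/\partial\theta_i$ will carry the factor $[w^2\tilde r_b(w)]_{w=(x-\lambda)/\theta_i}$ together with $\psi_1'$ evaluated at the inner sums; log-concavity of $\psi_1$ makes $\psi_1'/\psi_1$ (hence the relevant $\psi_1'$-ratios) monotone, which is what lets the sign analysis of $\partial\Phi_1/\partial\theta_i-\partial\Phi_1/\partial\theta_j$ go through the way (\ref{eq3.5})--(\ref{eq3.6}) did in the independent case. The second step is purely about the copulas: since $\bm Z$ and $\bm Y$ have identical marginals and $\phi_2\circ\psi_1$ is sub-additive, Lemma~\ref{lem3.2} gives $C_{\psi_2}(\bm v)\le C_{\psi_1}(\bm v)$ for all $\bm v$; pushing this through the order-statistic representation (which is an increasing functional of the copula evaluated at the marginal distribution functions) yields $F_{Z_{n-1:n}}(x)\le F_{Y_{n-1:n}}(x)$, i.e. $Z_{n-1:n}\le_{st}Y_{n-1:n}$. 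Transitivity of $\le_{st}$ then finishes part (i). Part (ii) is the mirror image: swap the roles of $\bm\theta,\bm\delta$ in the first step, and use Lemma~\ref{lem3.1} with super-additivity of $\phi_2\circ\psi_1$ in the copula step, this time in the reverse direction.

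The main obstacle I anticipate is the copula-monotonicity step, specifically verifying cleanly that the distribution function of $X_{n-1:n}$, written as a difference $\sum_l \psi\big(\sum_{k\neq l}\phi(u_k)\big)-(n-1)\psi\big(\sum_k\phi(u_k)\big)$, is genuinely monotone in the underlying copula when we replace $C_{\psi_1}$ by $C_{\psi_2}$ termwise --- the $-(n-1)\psi(\cdots)$ term has the ``wrong'' sign, so one cannot just say ``larger copula, larger distribution function'' argument-by-argument. The resolution is to recognize $F_{X_{n-1:n}}$ as the probability of a union-type event expressed through inclusion--exclusion so that the ordering $C_{\psi_2}\le C_{\psi_1}$ on all lower-dimensional margins propagates correctly; this is precisely the mechanism already used in the dependent extreme-order-statistic results of \cite{das2019ordering} and \cite{li2015ordering}, so I would cite that machinery rather than re-derive it. A secondary (routine) obstacle is bookkeeping the $\mathcal{E}_+$ versus $\mathcal{D}_+$ cases and the orientation of the sub-/super-additivity hypotheses so that the two inequalities compose in the same direction; this is mechanical once the template from Theorem~\ref{th3.1} is in place.
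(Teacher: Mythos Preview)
Your two-step sandwich via the auxiliary vector $\bm Z\sim\mathbb{ELS}(\lambda\bm 1_n,\bm\delta,\alpha\bm 1_n;F_b,\psi_1)$ is exactly the paper's route: the paper writes $J_1(x,\bm\theta;F_b,\psi_1)\ge J_1(x,\bm\delta;F_b,\psi_1)\ge J_2(x,\bm\delta;F_b,\psi_2)$, where the first inequality is your Schur-convex/decreasing step and the second is your copula-comparison step, and the factorisation $\partial\Phi_1/\partial\theta_i=\varrho_{i1}(\bm\theta)\,\varrho_1(\theta_i)$ with log-concavity of $\psi_1$ driving the monotonicity of $\psi_1/\psi_1'$ is precisely what you sketch.

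Two small points. First, you have the directions crossed in the copula step: from $C_{\psi_2}\le C_{\psi_1}$ and ``increasing functional of the copula'' you should get $F_{Z_{n-1:n}}(x)\ge F_{Y_{n-1:n}}(x)$ (not $\le$), and it is \emph{this} inequality that gives $Z_{n-1:n}\le_{st}Y_{n-1:n}$; your two sign slips happen to cancel. Second, the paper does not engage with the ``wrong-sign'' worry you raise about the $-(n-1)\psi(\cdots)$ term: it simply invokes Lemma~\ref{lem3.2} and asserts $J_1(x,\bm\delta;F_b,\psi_1)\ge J_2(x,\bm\delta;F_b,\psi_2)$ directly, applying the copula inequality to each $\psi$-term in the inclusion--exclusion expression. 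So your instinct that this step deserves care goes beyond what the paper actually argues; if you want to match the paper you can cite Lemma~\ref{lem3.2} termwise and move on, but your union-event remark is a reasonable place to add a line of justification.
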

\begin{proof} $(i)$ To prove the first part of the theorem, let us denote
	\begin{equation}
	J_1(x,\boldsymbol{ \theta}; F_{b},\psi_1)=\sum\limits_{l=1}^{n}\psi_1\left[\sum\limits_{k\neq l}^{n}\phi_1\left\{\left[F_{b}\left(\frac{x-\lambda}{\theta_k}\right)\right]^{\alpha}\right\}\right]-(n-1)\psi_1\left[\sum\limits_{k=1}^{n}\phi_1\left\{\left[F_{b}\left(\frac{x-\lambda}{\theta_k}\right)\right]^{\alpha}\right\}\right]
	\end{equation}
	and
	\begin{equation}
	J_2(x,\boldsymbol{ \delta}; F_{b},\psi_2)=\sum\limits_{l=1}^{n}\psi_2\left[\sum\limits_{k\neq l}^{n}\phi_2\left\{\left[F_{b}\left(\frac{x-\lambda}{\delta_k}\right)\right]^{\alpha}\right\}\right]-(n-1)\psi_2\left[\sum\limits_{k=1}^{n}\phi_2\left\{\left[F_{b}\left(\frac{x-\lambda}{\delta_k}\right)\right]^{\alpha}\right\}\right].
	\end{equation}
	Utilizing sub-additivity of $\phi_2\circ\psi_1$, and then from Lemma \ref{lem3.2}, we obtain
	$
	J_1(x,\boldsymbol{ \delta}; F_{b},\psi_1)\geq J_2(x,\boldsymbol{ \delta};F_{b},\psi_2).
	$ Thus,
	to prove the required result, we need to show that
	$
	J_1(x,\boldsymbol{ \theta}; F_{b},\psi_1)\geq J_1(x,\boldsymbol{ \delta};F_{b},\psi_1),
	$
	which is equivalent to showing that
	$J_1(x,\boldsymbol{ \theta}; F_{b},\psi_1)$ is decreasing and Schur-convex with respect to $\boldsymbol{ \theta}\in\mathcal{E_+}~(or~\mathcal{D_+}).$
	Further, denote
	\begin{eqnarray}
	\Phi_1\left({{\boldsymbol \theta}}\right)=J_1(x,\boldsymbol{ \theta}; F_{b},\psi_1).
	\end{eqnarray}
	After taking derivative of $\Phi_1\left({\boldsymbol \theta}\right)$ with respect to $\theta_i,$ for $i=1,\cdots,n$, we obtain
	\begin{equation}\label{eqd3.25}
	\frac{\partial\Phi_1\left({\boldsymbol \theta}\right)}{\partial \theta_i}=\varrho_{i1}({\boldsymbol \theta})\varrho_{1}(\theta_i),
	\end{equation}
	where
	\begin{eqnarray*}	 \varrho_{i1}({\boldsymbol \theta})&=&(n-1)\psi_{1}'\left(\sum\limits_{k=1}^{n}t_k\right)-\sum\limits_{l\notin\{i,j\}}^{n}\psi_{1}'\left(\sum\limits_{k\neq l}^{n}t_k\right)
	-\psi_{1}'\left(t_i+\sum\limits_{k\notin\{i,j\}}^{n}\left\{t_k\right\}\right),\\
	 \varrho_{1}(\theta_i)&=&\left[\frac{[w^2{\tilde{r}_b}(w)]_{w=\left({(x-\lambda)}{/\theta_i}\right)}}{x-\lambda}\right]\left[\frac{{\psi_{1}}({t_i})}{{\psi}'_{1}\left(t_i\right) }\right]
	\end{eqnarray*}
	and ${t_i=\phi_{1}\left[F_{b}\left(({x-\lambda}){/\theta_i}\right)\right]^{\alpha}}$, for $i=1,\cdots,n$. Let $\bm{\theta}\in\mathcal{E_+}$.
	Then, for $1\leq i\leq j \leq n,$ $\theta_i\leq\theta_j.$ Therefore,  $\left[F_{b}\left(({x-\lambda}){/\theta_i}\right)\right]^{\alpha}\geq\left[F_{b}\left(({x-\lambda}){/\theta_j}\right)\right]^{\alpha}$ and $({x-\lambda}){/\theta_i}\geq({x-\lambda}){/\theta_j}$. This gives $t_i\leq t_j$. Furthermore,
	using the properties of the generator of an Archimedean copula, we have for $l=1,\cdots,n$,
	\begin{eqnarray}
	\psi_{1}'\left(\sum\limits_{k=1}^{n}t_k\right)&\geq& \psi_{1}'\left(\sum\limits_{k\neq l}^{n}t_k\right)\nonumber\\
	\Rightarrow (n-1)\psi_{1}'\left(\sum\limits_{k=1}^{n}t_k\right)&\geq& \sum\limits_{l\neq i}^{n}\psi_{1}'\left(\sum\limits_{k\neq l}^{n}t_k\right),~i=1,\cdots,n.
	\end{eqnarray}
	Therefore,
	\begin{align}
	 \varrho_{i1}(\boldsymbol{\theta})&=(n-1)\psi_{1}'\left(\sum\limits_{k=1}^{n}t_k\right)-\sum\limits_{l\notin\{i,j\}}^{n}\psi_{1}'\left(\sum\limits_{k\neq l}^{n}t_k\right)
	-\psi_{1}'\left(\sum\limits_{k\neq j}^{n}\left\{t_k\right\}\right)\nonumber\\
	&=(n-1)\psi_{1}'\left(\sum\limits_{k=1}^{n}t_k\right)-\sum\limits_{l\neq i}^{n}\psi_{1}'\left(\sum\limits_{k\neq l}^{n}t_k\right)\nonumber\\&\geq 0.
	\end{align}
	Hence, $\varrho_{i1}(\boldsymbol{\theta})$ is positive, as  $\psi_{1}(x)$ is decreasing. Now, we have to show both $\varrho_{i1}(\boldsymbol{\theta})$ and $ \varrho_{1}(\theta_{i})$ are decreasing with respect to $\theta_i,$ for $i=1,\cdots,n.$ As we know, $\psi_{1}(x)$ is decreasing and convex. Thus, $\psi_{1}'(x)$ is negative and increasing. Also, $\phi_{1}[[F_{b}(\frac{x-\lambda}{\theta_i})]^{\alpha}]$ is increasing in $\theta_i,$ for $i=1,\cdots,n.$ Therefore, $\varrho_{i1}(\boldsymbol{\theta})$ is decreasing in $\theta_i,$ for all $i= 1,\cdots,n.$ Making use of the given assumptions, we have the following two inequalities:
	\begin{align*}\label{eqd3.22}
	\left[\frac{{\psi_{1}}({t_i})}{{\psi}_{1}'\left(t_i\right) }\right]\leq&	  \left[\frac{{\psi_{1}}({t_j})}{{\psi}_{1}'\left(t_j\right) }\right], \\
	\left[w^2 \tilde{r}_{b}(w)\right]_{w=\left(\frac{x-\lambda}{\theta_i}\right)}\geq&\left[w^2 \tilde{r}_{b}(w)\right]_{w=\left(\frac{x-\lambda}{\theta_j}\right)}.
	\end{align*}
	Using  these inequalities, one can easily check that $\varrho_1(\theta_i)$ is decreasing  with respect to $\theta_i,$ for all $i=1,\cdots,n.$ Thus,  $\Phi_1(\boldsymbol{\theta})$ is decreasing and  Schur-convex with respect to $\boldsymbol{\theta}\in \mathcal{E}_+,$ by Lemma \ref{lem2.1b}. Rest of the proof can be proved by Theorem $A.8$ of \cite{marshall2010}. Note that  the proof is similar when $\boldsymbol{\theta}\in \mathcal{D}_+$. Hence, it is omitted.\\
	\\
	$(ii)$
	Utilizing super-additive property of $\phi_2\circ\psi_1$ and Lemma \ref{lem3.1}, we have
	$
	J_1(x,\boldsymbol{ \theta}; F_{b},\psi_1)\leq J_2(x,\boldsymbol{ \theta}; F_{b},\psi_2).
	$
	Now, to prove the stated result, it is enough to establish that
	$
	J_2(x,\boldsymbol{ \delta};F_{b},\psi_2)\geq J_2(x,\boldsymbol{ \theta};F_{b},\psi_2).
	$
	This is equivalent to establish that $J_2(x,\boldsymbol{ \delta};F_{b},\psi_2)$ is decreasing and Schur-convex with respect to $\boldsymbol{ \delta}\in\mathcal{E_+} ~(\text{or } \mathcal{D_+}).$ This follows in a similar vein to the proof of the first part, and hence it is not presented here.
\end{proof}
The following counterexample shows that the result in Theorem \ref{th3.8}$(i)$ does not hold if we do not consider all the assumptions. Here, the baseline distribution is taken as $F_{b}(x)=1-\exp(1-x^a),~x\geq1,~a>0$, for which $w^2\tilde{r}_b(w)$ is decreasing when $a=0.5.$
\begin{counterexample}\label{cex3.2}
	Let us consider two $3$-dimensional vectors $\bm{X}$ and $\bm{Y}$ such that $\bm{X}\sim \mathbb{ELS}(
	5,(2.5,6.5,3.1),0.1;1-\exp(1-x^{0.5}),e^{-x^\frac{1}{a_1}})$ and $\bm{Y}\sim \mathbb{ELS}(
	5,(4.5,6.5,7.5),0.1;1-\exp(1-x^{0.5}),e^{-x^\frac{1}{a_2}}),~x>0,~a_1,~a_2\geq1.$
	 Note that $\psi_1$ and $\psi_2$ both are log-convex.  Also, for $a_2\leq a_1,$ $\phi_2\circ\psi_1(x)=x^{\frac{a_2}{a_1}}$ is concave, implies $\phi_2\circ\psi_1$ is sub-additive.  Consider $a_2=1.0001$ and $a_1=2.5$. Here, $\boldsymbol{\theta}$ does not belong to $\mathcal{E}_+ ~(\text{or }\mathcal{D}_+)$ and $\boldsymbol{\delta}\in\mathcal{E}_+.$ Clearly, all the conditions of Theorem \ref{th3.8}$(i)$ hold except the log-concavity of the generators, increasing property of $w^2\tilde{r}_{b}(w)$ and $\boldsymbol{\theta}\in\mathcal{E}_{+}~(or~\mathcal{D}_{+})$.
	We plot the graphs of ${F}_{X_{2:3}}(x)$ and ${F}_{Y_{2:3}}(x)$ in Figure $3a$. It reveals that Theorem \ref{th3.8}$(i)$ does not hold.
\end{counterexample}

\begin{figure}[h]
	\begin{center}
		\subfigure[]{\label{c1.00}\includegraphics[height=5in]{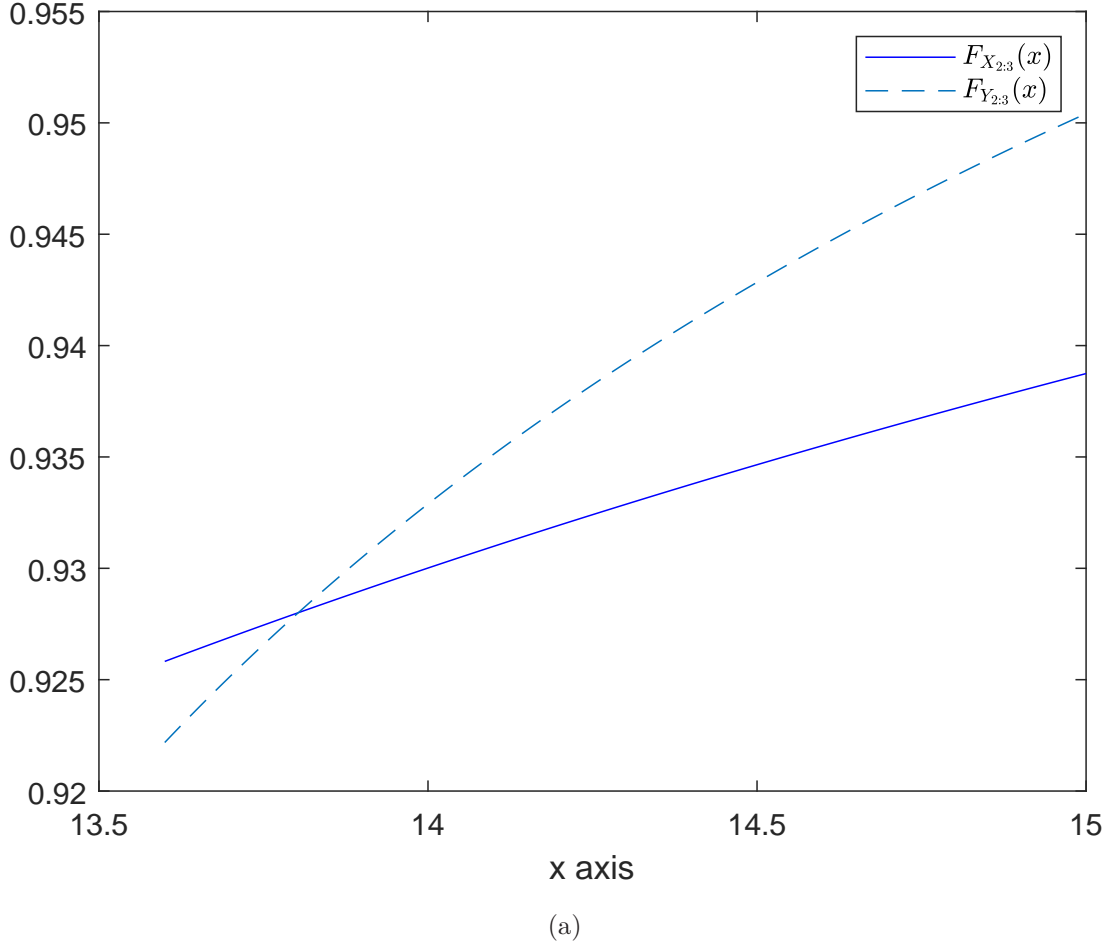}}
		\caption{
			(a) Plots of  ${F}_{X_{2:3}}(x)$ and ${F}_{Y_{2:3}}(x)$ as in Counterexample \ref{cex3.2}.
		}
	\end{center}
\end{figure}

In the next result, we consider that location parameters are equal and vector-valued. The proof can be completed using arguments similar to that of Theorem \ref{th3.8}. Thus, it is omitted.
\begin{theorem}\label{th3.9}
	Let  $\boldsymbol{X}\sim \mathbb{ELS}(
	\boldsymbol{\lambda},\boldsymbol{\theta},\boldsymbol{\alpha};F_{b},\psi_1)$ and $\boldsymbol{Y}\sim \mathbb{ELS}(
	\boldsymbol{\mu},\boldsymbol{\delta},\boldsymbol{\beta};F_{b},\psi_2),$ with $\boldsymbol{\alpha}=\boldsymbol{\beta}=\alpha \boldsymbol{1}_n$ and $\boldsymbol{\mu}=\boldsymbol{\lambda}$. Let $\phi_1=\psi^{-1}_1$, $\phi_2=\psi^{-1}_2$ and  $\psi_1$ or $\psi_2$ be log-concave.
	Also, assume $ \boldsymbol{\lambda},~ \boldsymbol{\theta},~\boldsymbol{\delta}\in\mathcal{E}_+~(or~\mathcal{D_+})$ and $ w\tilde{r}_{b}(w)$  is increasing in $w>0$. Then,
	\begin{itemize}
		\item[(i)]  ${{\boldsymbol\theta}}\succeq^{w}{{\boldsymbol\delta}}\Rightarrow X_{n-1:n}\leq_{st}Y_{n-1:n}$, provided $\phi_2\circ\psi_1$ is sub-additive;
		\item[(ii)] ${{\boldsymbol\delta}}\succeq^{w}{{\boldsymbol\theta}}\Rightarrow Y_{n-1:n}\leq_{st}X_{n-1:n}$, provided $\phi_2\circ\psi_1$ is super-additive.
	\end{itemize}
\end{theorem}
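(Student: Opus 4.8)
The plan is to follow the proof of Theorem~\ref{th3.8} essentially verbatim, the only structural change being that the common location vector $\boldsymbol{\lambda}=\boldsymbol{\mu}$ is now allowed to have distinct coordinates. Write the distribution function of $X_{n-1:n}$ as
\[
\Phi_1(\boldsymbol{\theta}):=\sum_{l=1}^{n}\psi_1\!\left[\sum_{k\neq l}^{n}\phi_1\!\left\{\left[F_{b}\!\left(\tfrac{x-\lambda_k}{\theta_k}\right)\right]^{\alpha}\right\}\right]-(n-1)\psi_1\!\left[\sum_{k=1}^{n}\phi_1\!\left\{\left[F_{b}\!\left(\tfrac{x-\lambda_k}{\theta_k}\right)\right]^{\alpha}\right\}\right],
\]
and let $J_2(x,\boldsymbol{\delta};F_b,\psi_2)$ be the analogous expression for $Y_{n-1:n}$ (recall $\boldsymbol{\mu}=\boldsymbol{\lambda}$). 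For part~(i), sub-additivity of $\phi_2\circ\psi_1$ together with Lemma~\ref{lem3.2} yields $\Phi_1(\boldsymbol{\delta})\ge J_2(x,\boldsymbol{\delta};F_b,\psi_2)$, so it is enough to show $\boldsymbol{\theta}\succeq^{w}\boldsymbol{\delta}\Rightarrow\Phi_1(\boldsymbol{\theta})\ge\Phi_1(\boldsymbol{\delta})$; by Lemma~\ref{lem2.1b} (or Lemma~\ref{lem2.1a} on $\mathcal{D}_+$) together with Theorem~$A.8$ of \cite{marshall2010}, this reduces to showing that $\Phi_1$ is decreasing and Schur-convex on $\mathcal{E}_+$ (or $\mathcal{D}_+$). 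Part~(ii) is handled symmetrically, using super-additivity of $\phi_2\circ\psi_1$ and Lemma~\ref{lem3.1} to reduce to the monotonicity and Schur-convexity of $J_2(x,\cdot;F_b,\psi_2)$ in $\boldsymbol{\delta}$.

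Next I would differentiate $\Phi_1$. Setting $t_i=\phi_1\big([F_b((x-\lambda_i)/\theta_i)]^{\alpha}\big)$ and using $\phi_1'(v)=1/\psi_1'(\phi_1(v))$ together with $[F_b((x-\lambda_i)/\theta_i)]^{\alpha}=\psi_1(t_i)$, the chain rule gives
\[
\frac{\partial\Phi_1(\boldsymbol{\theta})}{\partial\theta_i}=\varrho_{i1}(\boldsymbol{\theta})\left[\frac{[w\,\tilde{r}_{b}(w)]_{w=(x-\lambda_i)/\theta_i}}{\theta_i}\right]\left[\frac{\psi_1(t_i)}{\psi_1'(t_i)}\right],
\]
where $\varrho_{i1}(\boldsymbol{\theta})=(n-1)\psi_1'\!\big(\sum_{k}t_k\big)-\sum_{l\neq i}\psi_1'\!\big(\sum_{k\neq l}t_k\big)$. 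Note that the factor $(x-\lambda_i)/\theta_i^{2}$ produced by the derivative has been rewritten as $\big((x-\lambda_i)/\theta_i\big)/\theta_i$, which is exactly why the monotonicity hypothesis weakens from ``$w^{2}\tilde{r}_b(w)$ increasing'' in Theorem~\ref{th3.8} to ``$w\tilde{r}_b(w)$ increasing'' here, the residual $1/\theta_i$ being absorbed by the sign analysis below. Convexity of $\psi_1$ makes $\psi_1'$ negative and increasing, so $\psi_1'(\sum_k t_k)\ge\psi_1'(\sum_{k\neq l}t_k)$ for every $l$, whence $\varrho_{i1}(\boldsymbol{\theta})\ge0$; since also $\psi_1>0$, $\psi_1'<0$ and $w\tilde{r}_b(w)>0$, the displayed derivative is $\le0$, i.e.\ $\Phi_1$ is decreasing.

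For the Schur-convexity I would fix $1\le i\le j\le n$ and compare $\partial\Phi_1/\partial\theta_i$ with $\partial\Phi_1/\partial\theta_j$. On $\mathcal{E}_+$ we have $\theta_i\le\theta_j$, hence $1/\theta_i\ge1/\theta_j$, and $\boldsymbol{\lambda}\in\mathcal{E}_+$ forces $\lambda_i\le\lambda_j$, so $w_i:=(x-\lambda_i)/\theta_i\ge(x-\lambda_j)/\theta_j=:w_j$; consequently $w_i\tilde{r}_b(w_i)\ge w_j\tilde{r}_b(w_j)$, and $[F_b(w_i)]^{\alpha}\ge[F_b(w_j)]^{\alpha}$ with $\phi_1$ decreasing gives $t_i\le t_j$. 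Log-concavity of $\psi_1$ then makes $\psi_1/\psi_1'$ increasing, so $\big|\psi_1(t_i)/\psi_1'(t_i)\big|\ge\big|\psi_1(t_j)/\psi_1'(t_j)\big|$, while $t_i\le t_j$ and $\psi_1'$ increasing give $\varrho_{i1}(\boldsymbol{\theta})\ge\varrho_{j1}(\boldsymbol{\theta})$ (indeed $\varrho_{i1}-\varrho_{j1}=\psi_1'(S-t_i)-\psi_1'(S-t_j)$ with $S=\sum_k t_k$). Multiplying these coordinatewise inequalities between nonnegative quantities yields $\partial\Phi_1/\partial\theta_i\le\partial\Phi_1/\partial\theta_j\le0$, i.e.\ $\Phi_{1(k)}$ is increasing in $k$; by Lemma~\ref{lem2.1b} this is precisely the Schur-convexity required, and the $\mathcal{D}_+$ case is its mirror image via Lemma~\ref{lem2.1a}. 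Part~(ii) repeats all of this for $J_2(x,\cdot;F_b,\psi_2)$, now with $\psi_2$ log-concave. I expect the one genuinely delicate point to be this coordinate comparison --- specifically, confirming that the coupled location--scale heterogeneity still produces the ordering $w_i\ge w_j$, which is exactly why $\boldsymbol{\lambda}$, $\boldsymbol{\theta}$ and $\boldsymbol{\delta}$ are all required to lie in the same cone $\mathcal{E}_+$ or $\mathcal{D}_+$; everything else is the routine bookkeeping already carried out in Theorem~\ref{th3.8}.
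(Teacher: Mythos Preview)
Your proposal is correct and follows exactly the approach the paper intends: the paper's own proof of Theorem~\ref{th3.9} simply says the argument is analogous to Theorem~\ref{th3.8} and omits all details, and you have supplied precisely those details. In particular, your observation that rewriting $(x-\lambda_i)/\theta_i^{2}$ as $w_i/\theta_i$ is what forces the hypothesis to weaken from ``$w^{2}\tilde{r}_b(w)$ increasing'' to ``$w\tilde{r}_b(w)$ increasing'' (with the extra $1/\theta_i$ handled by the common ordering of $\boldsymbol{\lambda}$ and $\boldsymbol{\theta}$ in $\mathcal{E}_+$ or $\mathcal{D}_+$) is exactly the point the paper leaves implicit, and your index comparison $\varrho_{i1}-\varrho_{j1}=\psi_1'(S-t_i)-\psi_1'(S-t_j)$ is a clean way to verify the Schur-convexity.
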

\begin{remark}
	It is worth to mention that there are many Archimedean copulas, which satisfy super-additivity and sub-additivity of $\phi_2\circ\psi_1$ and log-concavity of $\psi_1$ and $\psi_2$.
	\begin{itemize}
	\item {\bf Independence copula:} Consider independence copula with generator $\psi_1(x)=\psi_2(x)=e^{-x},~x>0.$ For this copula, one can easily check that $\psi_{1}$ and $\psi_{2}$ both are log-concave. Further, $\phi_{2}\circ\psi_{1}(x)=x$, and clearly $\frac{d^2[\phi_{2}\circ\psi_{1}(x)]}{dx^2}$ is independent of $x$. Therefore, $\phi_{2}\circ\psi_{1}(x)$ satisfies both sub-additivity and super-additivity properties.
	
	\item {\bf Gumbel copula:} Take Gumbel copula with generators $\psi_1(x)=e^{\frac{1}{a_1}(1-e^x)} $ and $\psi_2(x)=e^{\frac{1}{a_2}(1-e^x)},~x>0,~a_1,~a_2\in(0,1].$ Here, $\psi_{1}$ and $\psi_{2}$ both are log-concave. Again, $\phi_{2}\circ\psi_{1}(x)=\log(1-\frac{a_2}{a_1}(1-e^x)).$ Thus,  $\frac{d^2[\phi_{2}\circ\psi_{1}(x)]}{dx^2}=\frac{a_2(a_1-a_2)e^{x}}{(a_1-{a_2}(1-e^x))^2}$. Therefore, for $a_1\geq a_2$ and $a_1\leq a_2$, $\phi_{2}\circ\psi_{1}(x)$ is super-additive and sub-additive, respectively.
	\end{itemize}	
\end{remark}
In the following, we consider an example, which illustrates Theorem \ref{th3.9}.

\begin{example}\label{exe3.3}
(i)	Let us consider two vectors $\boldsymbol{X}\sim \mathbb{ELS}((4,6,8),(5,9,10),4;(\frac{x}{100})^{0.05},e^{\frac{1}{0.1}(1-e^x)})$ and
	$\boldsymbol{Y}\sim \mathbb{ELS}((4,6,8),(7,10,12),4;(\frac{x}{100})^{0.05},e^{\frac{1}{0.5}(1-e^x)})$, where $0<x\leq 100.$ Here, $w \tilde{r}_{b}(w)$ is increasing. Further, it is not difficult to check that all the conditions of Theorem \ref{th3.9}(i) are satisfied. Now,
	we plot the graphs of ${F}_{X_{2:3}}(x)$ and ${F}_{Y_{2:3}}(x)$ in Figure $4a$. It is seen that the graph of ${F}_{X_{2:3}}(x)$ is above the graph of ${F}_{Y_{2:3}}(x)$. That is, $X_{2:3}\leq_{st}Y_{2:3}$.\\\\
	(ii) Let $\boldsymbol{X}\sim \mathbb{ELS}((2,4,6),(7,9,11),4;(\frac{x}{100})^{0.02},e^{\frac{1}{0.9}(1-e^x)})$ and
	$\boldsymbol{Y}\sim \mathbb{ELS}((2,4,6),(2,3,5),4;$
	$(\frac{x}{100})^{0.02},e^{\frac{1}{0.7}(1-e^x)})$, where $0<x\leq 100$. Clearly, $w \tilde{r}_{b}(w)$ is increasing. Note that, all the conditions of Theorem \ref{th3.9}(ii) are satisfied. The graphs of ${F}_{X_{2:3}}(x)$ and ${F}_{Y_{2:3}}(x)$ are depicted in Figure $4b$. This shows that $Y_{2:3}\le_{st}X_{2:3}$.
\end{example}

\begin{figure}[h]
	\begin{center}
		\subfigure[]{\label{c5}\includegraphics[height=2.46in]{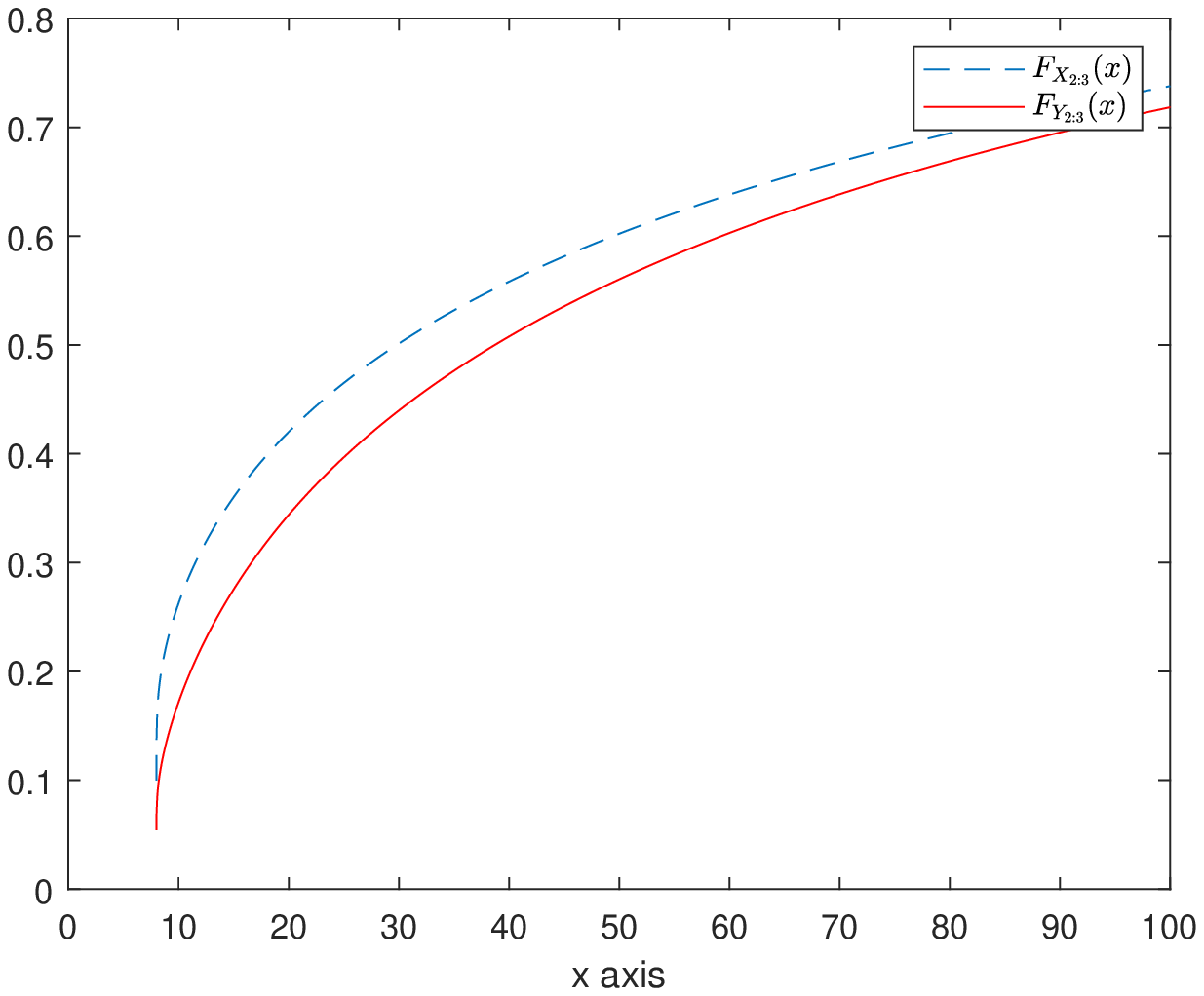}}
			\subfigure[]{\label{c6}\includegraphics[height=2.46in]{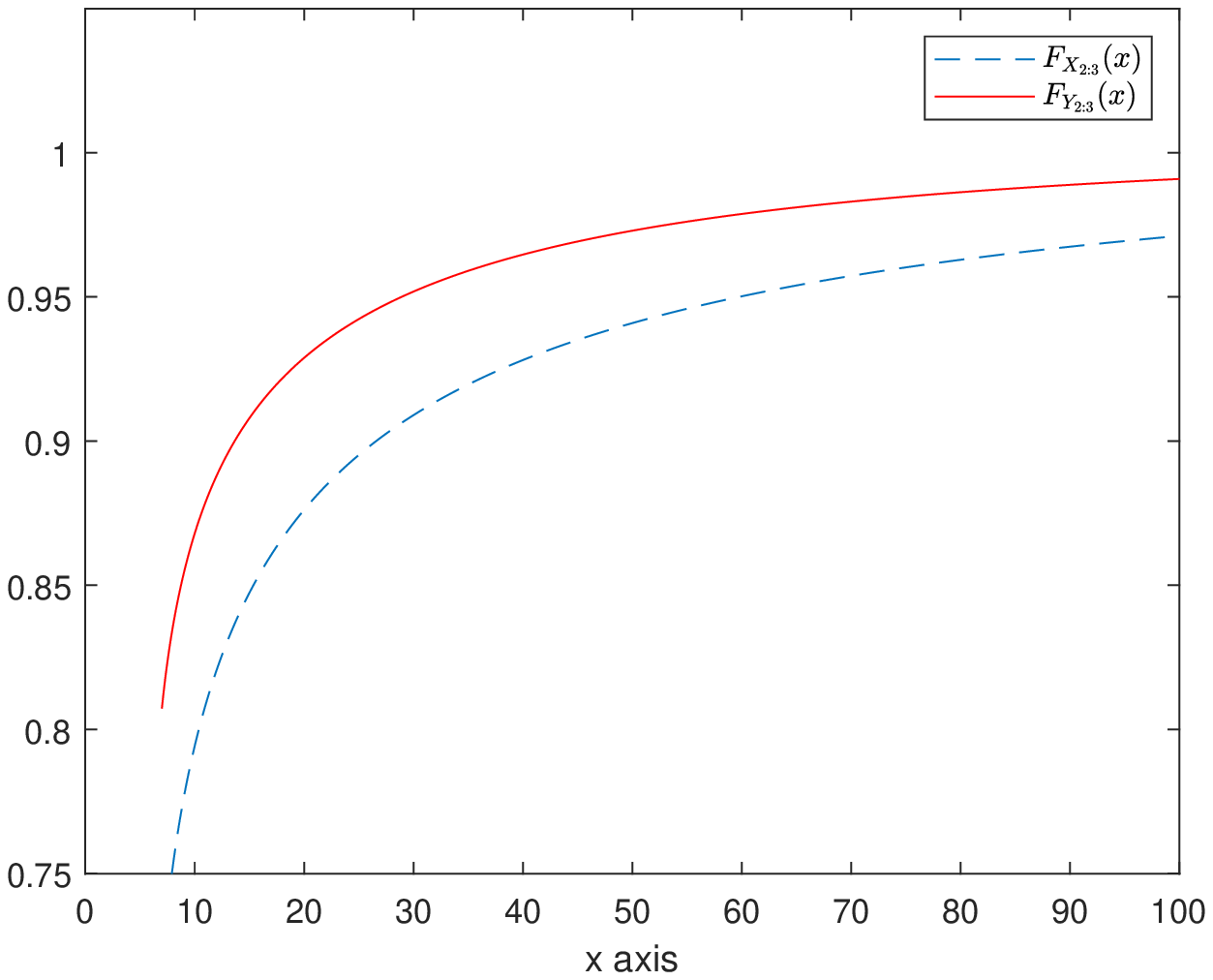}}
		\caption{
			(a) Plots of ${F}_{X_{2:3}}(x)$ and ${F}_{Y_{2:3}}(x)$ as in Example \ref{exe3.3}(i). (b) Plots of ${F}_{X_{2:3}}(x)$ and ${F}_{Y_{2:3}}(x)$ as in Example \ref{exe3.3}(ii).
		}
	\end{center}
\end{figure}

In this part of the subsection, we concentrate on the sets of dependent samples sharing Archimedean copula with common generator.
\begin{theorem}\label{th3.10}
	Suppose  $\boldsymbol{X}\sim \mathbb{ELS}(
	\boldsymbol{\lambda},\boldsymbol{\theta},\boldsymbol{\alpha};F_{b},\psi)$ and $\boldsymbol{Y}\sim \mathbb{ELS}(
	\boldsymbol{\mu},\boldsymbol{\delta},\boldsymbol{\beta};F_{b},\psi),$ with $\boldsymbol{\lambda}=\boldsymbol{\mu}=\lambda\boldsymbol{1}_{n}$ and $\boldsymbol{\alpha}=\boldsymbol{\beta}=\alpha \boldsymbol{1}_{n}$. Also, let $ \boldsymbol{\theta},~\boldsymbol{\delta}\in\mathcal{E_+}~(or ~\mathcal{D_+})$ and $w^2 \tilde{r}_{b}(w)$ be increasing in $w$. Then, ${{\boldsymbol\theta}}\succeq^{w}{{\boldsymbol\delta}}\Rightarrow X_{n-1:n}\leq_{st}Y_{n-1:n}$, provided $\psi/\psi'$ is increasing.
\end{theorem}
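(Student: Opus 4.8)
The plan is to notice first that, since both random vectors carry the \emph{same} Archimedean generator $\psi$ (with $\phi=\psi^{-1}$), the composition $\phi\circ\psi$ is the identity map, which is simultaneously sub-additive and super-additive; hence Lemmas \ref{lem3.1} and \ref{lem3.2} give nothing here and the survival-copula comparison is automatic. The whole problem then reduces to a Schur-type analysis of the distribution function $\Psi(\boldsymbol{\theta})\overset{def}{=}F_{X_{n-1:n}}(x)$ in the scale vector, in the spirit of the first part of Theorem \ref{th3.8}, except that the hypothesis ``log-concavity of $\psi$'' is replaced by the equivalent condition ``$\psi/\psi'$ increasing'' — equivalent because $(\psi/\psi')'=1-\psi\psi''/(\psi')^2$ has the same sign as $-(\ln\psi)''$. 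Thus it suffices to show that $\Psi$ is decreasing in each coordinate and Schur-convex on $\mathcal{E_+}$ (and, by the mirror argument, on $\mathcal{D_+}$); then from $\boldsymbol{\theta}\succeq^{w}\boldsymbol{\delta}$, Theorem $A.8$ of \cite{marshall2010} gives $\Psi(\boldsymbol{\theta})\ge\Psi(\boldsymbol{\delta})$, i.e. $\bar F_{X_{n-1:n}}(x)\le\bar F_{Y_{n-1:n}}(x)$ for every $x$, which is the asserted order $X_{n-1:n}\leq_{st}Y_{n-1:n}$.

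Next I would differentiate. Writing $w_i=(x-\lambda)/\theta_i$, $d_i=[F_b(w_i)]^{\alpha}$ and $t_i=\phi(d_i)$, so that $\psi(t_i)=d_i$ and $\phi'(d_i)=1/\psi'(t_i)$, a chain-rule computation (absorbing $1/\theta_i^{2}$ into $w_i^{2}/(x-\lambda)$ and using $\psi(t_i)=[F_b(w_i)]^{\alpha}$) puts the derivative into the product form
\[
\frac{\partial\Psi(\boldsymbol{\theta})}{\partial\theta_i}
=-\frac{\alpha}{x-\lambda}\,\varrho_i(\boldsymbol{\theta})\,\big[w_i^{2}\tilde{r}_b(w_i)\big]\left(-\frac{\psi(t_i)}{\psi'(t_i)}\right),
\]
where $\varrho_i(\boldsymbol{\theta})=(n-1)\psi'\!\big(\sum_{k=1}^{n}t_k\big)-\sum_{l\neq i}^{n}\psi'\!\big(\sum_{k\neq l}^{n}t_k\big)$. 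Since $\psi>0$, $\psi'<0$ and $\psi'$ is increasing (convexity of $\psi$), each $\sum_{k\neq l}t_k\le\sum_{k}t_k$ yields $\varrho_i(\boldsymbol{\theta})\ge 0$; moreover $-\psi(t_i)/\psi'(t_i)>0$ and $w_i^{2}\tilde{r}_b(w_i)\ge 0$, so $\partial\Psi/\partial\theta_i\le 0$, giving the coordinatewise monotonicity.

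For Schur-convexity on $\mathcal{E_+}$, take $1\le i\le j\le n$, so $\theta_i\le\theta_j$, whence $w_i\ge w_j$, $d_i\ge d_j$ and (as $\phi$ is decreasing) $t_i\le t_j$. I would then check three nonnegative inequalities: (a) $\varrho_i(\boldsymbol{\theta})\ge\varrho_j(\boldsymbol{\theta})$, since $\varrho_i(\boldsymbol{\theta})-\varrho_j(\boldsymbol{\theta})=\psi'\!\big(\sum_{k\neq i}t_k\big)-\psi'\!\big(\sum_{k\neq j}t_k\big)\ge 0$ because $\sum_{k\neq i}t_k-\sum_{k\neq j}t_k=t_j-t_i\ge 0$ and $\psi'$ is increasing; (b) $w_i^{2}\tilde{r}_b(w_i)\ge w_j^{2}\tilde{r}_b(w_j)$, by the hypothesis that $w^{2}\tilde{r}_b(w)$ is increasing; (c) $-\psi(t_i)/\psi'(t_i)\ge-\psi(t_j)/\psi'(t_j)$, because $t_i\le t_j$ and $\psi/\psi'$ is increasing (both sides being negative). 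Multiplying (a)--(c) gives $\partial\Psi/\partial\theta_i\le\partial\Psi/\partial\theta_j$, that is $\Psi_{(k)}(\boldsymbol{\theta})$ is increasing in $k$, so by Lemma \ref{lem2.1b} $\Psi$ is Schur-convex on $\mathcal{E_+}$. The case $\boldsymbol{\theta},\boldsymbol{\delta}\in\mathcal{D_+}$ is obtained by reversing the coordinate ordering and invoking Lemma \ref{lem2.1a} in place of Lemma \ref{lem2.1b}.

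The main obstacle I anticipate is the bookkeeping needed to reach the clean product form of $\partial\Psi/\partial\theta_i$; this rests on the substitution $\psi(t_i)=[F_b(w_i)]^{\alpha}$ and on the identity $(x-\lambda)/\theta_i^{2}=w_i^{2}/(x-\lambda)$, after which all the sign and monotonicity checks are mechanical. Beyond Theorem \ref{th3.8}, the only genuinely new point is recognizing that the common-generator hypothesis makes $\phi\circ\psi$ the identity — so the copula-domination step costs nothing — and that ``$\psi/\psi'$ increasing'' is precisely the ingredient driving step (c).
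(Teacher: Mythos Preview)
Your proposal is correct and follows essentially the same route as the paper. The paper's own proof of Theorem~\ref{th3.10} writes the derivative in the product form $\eta_{i1}(\boldsymbol{\theta})\eta_1(\theta_i)$ (your $\varrho_i(\boldsymbol{\theta})$ and the remaining factor) and then defers the monotonicity and Schur-convexity checks to the argument of Theorem~\ref{th3.8}; you have simply written those checks out in full. Your remark that $\phi\circ\psi$ is the identity, and your observation that ``$\psi/\psi'$ increasing'' is exactly log-concavity of $\psi$, are both correct and make explicit why Theorem~\ref{th3.10} is the common-generator specialization of Theorem~\ref{th3.8}.
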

\begin{proof}
	Denote
	\begin{eqnarray}\label{eq3.30}
	\Phi_2\left({{\boldsymbol \theta}}\right)&=&F_{X_{n-1:n}}(x)\nonumber\\
	&=&\sum\limits_{l=1}^{n}\psi\left[\sum\limits_{k\neq l}^{n}\phi\left\{\left[F_{b}\left(({x-\lambda}){/\theta_k}\right)\right]^{\alpha}\right\}\right]-(n-1)\psi\left[\sum\limits_{k=1}^{n}\phi\left\{\left[F_{b}\left({(x-\lambda)}{/\theta_k}\right)\right]^{\alpha}\right\}\right].\nonumber\\
	\end{eqnarray}
	Further, denote ${g_i=\phi\left[F_{b}\left(({x-\lambda}){/\theta_i}\right)\right]^{\alpha}},$ for $i=1,\cdots,n$.
	Differentiating $\Phi_2\left({\boldsymbol \theta}\right)$ with respect to $\theta_i,$ for $i=1,\cdots,n$, we have
	\begin{equation}\label{eqd3.31}
		\frac{\partial\Phi_2\left({\boldsymbol \theta}\right)}{\partial \theta_i}=\eta_{i1}({\boldsymbol \theta})\eta_{1}(\theta_i),
	\end{equation}
	where
	\begin{eqnarray*}	 \eta_{i1}({\boldsymbol \theta})&=&(n-1)\psi'\left(\sum\limits_{k=1}^{n}g_k\right)-\sum\limits_{l\notin\{i,j\}}^{n}\psi'\left(\sum\limits_{k\neq l}^{n}g_k\right)
		-\psi'\left(g_i+\sum\limits_{k\notin\{i,j\}}^{n}\left\{g_k\right\}\right)\text{ and }\\
		 \eta_{1}(\theta_i)&=&\left[\frac{[w^2{\tilde{r}_b}(w)]_{w=\left({(x-\lambda)}{/\theta_i}\right)}}{x-\lambda}\right]\left[\frac{{\psi}({g_i})}{{\psi}'\left(g_i\right) }\right].
	\end{eqnarray*}
	To prove the required result, it is sufficient to show that $\Phi_2(\boldsymbol{\theta})$ given by (\ref{eq3.30}) is decreasing and  Schur-convex with respect to $\boldsymbol{\theta}\in \mathcal{D_+}~(or~\mathcal{E_+}).$ This follows using similar arguments as in Theorem \ref{th3.8}, and hence it is omitted.
\end{proof}
In the previous theorem, we have considered the location parameters are same and fixed. The next result states that Theorem \ref{th3.10} also holds if the location parameters are taken same but vector-valued. However, the conditions will be modified a little. The proof can be completed using the arguments similar to that of Theorem \ref{th3.10}. Therefore, it is omitted for the sake of conciseness.
\begin{theorem}\label{th3.12}
	Suppose  $\boldsymbol{X}\sim \mathbb{ELS}(
	\boldsymbol{\lambda},\boldsymbol{\theta},\boldsymbol{\alpha};F_{b},\psi)$ and $\boldsymbol{Y}\sim \mathbb{ELS}(
	\boldsymbol{\mu},\boldsymbol{\delta},\boldsymbol{\beta};F_{b},\psi),$ with $\boldsymbol{\lambda}=\boldsymbol{\mu}$ and $\boldsymbol{\alpha}=\boldsymbol{\beta}=\alpha \boldsymbol{1}_{n}$. Also, let $\boldsymbol{\lambda},~ \boldsymbol{\theta},~\boldsymbol{\delta}\in\mathcal{E_+}~(or ~\mathcal{D_+})$ and $w \tilde{r}_{b}(w)$ be increasing in $w>0$. Then, ${{\boldsymbol\theta}}\succeq^{w}{{\boldsymbol\delta}}\Rightarrow X_{n-1:n}\leq_{st}Y_{n-1:n}$, provided $\psi/\psi'$ is increasing.
\end{theorem}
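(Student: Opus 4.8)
The plan is to mirror the proof of Theorem~\ref{th3.10}, the only genuine modification being that the common location vector $\boldsymbol{\lambda}$ is now allowed to be non-constant; this is exactly the change that weakens the baseline hypothesis from ``$w^2\tilde{r}_{b}(w)$ increasing'' (as in Theorem~\ref{th3.10}) to ``$w\tilde{r}_{b}(w)$ increasing''. First I would write $F_{X_{n-1:n}}(x)=\Phi_2(\boldsymbol{\theta})$, where $\Phi_2$ is the Archimedean-copula expression with common generator $\psi$ and $g_i=\phi\!\left(\left[F_b\!\left(\frac{x-\lambda_i}{\theta_i}\right)\right]^{\alpha}\right)$, $\phi=\psi^{-1}$. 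Since $\boldsymbol{\mu}=\boldsymbol{\lambda}$, $\boldsymbol{\alpha}=\boldsymbol{\beta}=\alpha\boldsymbol{1}_n$ and the two samples share $\psi$, one has $G_{Y_{n-1:n}}(x)=\Phi_2(\boldsymbol{\delta})$. As $X_{n-1:n}\le_{st}Y_{n-1:n}$ is equivalent to $\Phi_2(\boldsymbol{\theta})\ge\Phi_2(\boldsymbol{\delta})$ for all admissible $x$ and $\boldsymbol{\theta}\succeq^{w}\boldsymbol{\delta}$, it suffices to show that $\Phi_2$ is decreasing in each argument and Schur-convex on $\mathcal{E_+}$ (on $\mathcal{D_+}$) — the Schur-convexity being verified through Lemma~\ref{lem2.1b} (Lemma~\ref{lem2.1a}) — and then to invoke Theorem~$A.8$ of \cite{marshall2010} to pass from the majorization statement to the required weak-supermajorization statement.

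Next I would differentiate. With $w_i=\frac{x-\lambda_i}{\theta_i}$, a direct computation gives $\partial\Phi_2(\boldsymbol{\theta})/\partial\theta_i=\eta_{i1}(\boldsymbol{\theta})\,\eta_1(\theta_i)$, where $\eta_{i1}(\boldsymbol{\theta})=(n-1)\psi'\!\big(\sum_k g_k\big)-\sum_{l\neq i}\psi'\!\big(\sum_{k\neq l}g_k\big)$ and $\eta_1(\theta_i)=\frac{\alpha}{\theta_i}\,[w\tilde{r}_{b}(w)]_{w=w_i}\,\frac{\psi(g_i)}{\psi'(g_i)}$; here I would use the rewriting $\frac{w_i^2\tilde{r}_{b}(w_i)}{x-\lambda_i}=\frac{w_i\tilde{r}_{b}(w_i)}{\theta_i}$, which is precisely the step that accommodates a non-constant $\boldsymbol{\lambda}$ and produces the first power of $w$. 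Since $\psi$ is a $d$-monotone generator with $d\ge n\ge2$, it is nonnegative, decreasing and convex, so $\psi'$ is negative and increasing; hence $\sum_k g_k\ge\sum_{k\neq l}g_k$ gives $\eta_{i1}(\boldsymbol{\theta})\ge0$, while $\psi(g_i)\ge0$, $\psi'(g_i)<0$, $\tilde{r}_{b}\ge0$ and $\theta_i>0$ give $\eta_1(\theta_i)\le0$. Thus $\partial\Phi_2/\partial\theta_i\le0$ and $\Phi_2$ is decreasing in each $\theta_i$.

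For Schur-convexity, take $1\le i\le j\le n$ and work on $\mathcal{E_+}$ (the $\mathcal{D_+}$ case is identical with all monotonicities reversed and Lemma~\ref{lem2.1a} in place of Lemma~\ref{lem2.1b}). From $\boldsymbol{\theta},\boldsymbol{\lambda}\in\mathcal{E_+}$ one has $x-\lambda_i\ge x-\lambda_j>0$ and $1/\theta_i\ge1/\theta_j$, hence $w_i\ge w_j$, and since $\phi$ is decreasing also $g_i\le g_j$. Consequently $\eta_{i1}(\boldsymbol{\theta})-\eta_{j1}(\boldsymbol{\theta})=\psi'\!\big(\sum_{k\neq i}g_k\big)-\psi'\!\big(\sum_{k\neq j}g_k\big)\ge0$ (because $\sum_{k\neq i}g_k\ge\sum_{k\neq j}g_k$ and $\psi'$ is increasing), so $\eta_{i1}\ge\eta_{j1}\ge0$; and using ``$w\tilde{r}_{b}(w)$ increasing'' ($w_i\tilde{r}_{b}(w_i)\ge w_j\tilde{r}_{b}(w_j)\ge0$), ``$\psi/\psi'$ increasing'' ($\psi(g_i)/\psi'(g_i)\le\psi(g_j)/\psi'(g_j)\le0$) and $1/\theta_i\ge1/\theta_j$ one gets $\eta_1(\theta_i)\le\eta_1(\theta_j)\le0$. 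Sign-chasing then yields $\eta_{i1}\eta_1(\theta_i)\le\eta_{j1}\eta_1(\theta_i)\le\eta_{j1}\eta_1(\theta_j)$, i.e. $\partial\Phi_2/\partial\theta_i\le\partial\Phi_2/\partial\theta_j$, so $\partial\Phi_2/\partial\theta_k$ is increasing in $k$; Lemma~\ref{lem2.1b} then gives Schur-convexity of $\Phi_2$ on $\mathcal{E_+}$. Combining ``decreasing'' and ``Schur-convex'' with $\boldsymbol{\theta}\succeq^{w}\boldsymbol{\delta}$ via Theorem~$A.8$ of \cite{marshall2010} finishes the proof.

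The main obstacle I anticipate is the final sign-chasing step for Schur-convexity: across indices the factors $\eta_{i1}$ and $\eta_1$ move in opposite directions, so their product is not visibly monotone in $k$; the chaining of the two one-sided inequalities works only because $\eta_1$ has a fixed (nonpositive) sign throughout. A secondary point that must be handled with care is checking that the rewritten factor $\eta_1(\theta_i)=\frac{\alpha}{\theta_i}\,[w\tilde{r}_{b}(w)]_{w=w_i}\,\psi(g_i)/\psi'(g_i)$ behaves monotonically — this is where the comonotonicity of $\boldsymbol{\lambda}$ with $\boldsymbol{\theta}$ (i.e. both lying in $\mathcal{E_+}$ or both in $\mathcal{D_+}$) and the weakened baseline condition are genuinely used; dropping either, as in the spirit of Counterexample~\ref{cex3.1}, should break the conclusion.
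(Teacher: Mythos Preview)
Your proposal is correct and follows precisely the route the paper intends: the paper omits the proof of Theorem~\ref{th3.12}, referring back to Theorem~\ref{th3.10} (and through it to Theorem~\ref{th3.8}), and your write-up carries out exactly that argument with the single genuine change being the rewriting $\frac{w_i^{2}\tilde r_b(w_i)}{x-\lambda_i}=\frac{w_i\tilde r_b(w_i)}{\theta_i}$, which is what permits replacing the hypothesis ``$w^2\tilde r_b(w)$ increasing'' by ``$w\tilde r_b(w)$ increasing'' once $\boldsymbol{\lambda}$ is comonotone with $\boldsymbol{\theta}$. Your sign-chasing for the Schur-convexity step is in fact cleaner than the paper's own treatment in Theorem~\ref{th3.8}, and correctly isolates why the fixed nonpositive sign of $\eta_1$ is what makes the product inequality go through.
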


Next, we assume that the location parameters are equal and fixed. Similarly, for the scale parameters. It is shown that there exists usual stochastic order between the second-largest order statistics when the shape parameter vectors are connected with the weakly supermajorization order.

\begin{theorem}\label{th3.12.}
	Assume that  $\boldsymbol{X}\sim \mathbb{ELS}(
	\boldsymbol{\lambda},\boldsymbol{\theta},\boldsymbol{\alpha},F_{b},\psi)$ and $\boldsymbol{Y}\sim \mathbb{ELS}(
	\boldsymbol{\mu
	},\boldsymbol{\delta},\boldsymbol{\beta},F_{b},\psi),$ with $\boldsymbol{\lambda}=\boldsymbol{\mu}=\lambda \boldsymbol{1}_n$ and $\boldsymbol{\theta}=\boldsymbol{\delta}=\theta\boldsymbol{1}_n$. Also, let $\boldsymbol{\alpha},~\boldsymbol{\beta}\in\mathcal{E_+}~(or~\mathcal{D_+})$. Then,  ${\boldsymbol{\beta}}\succeq^{w}{\boldsymbol\alpha}\Rightarrow Y_{n-1:n}\leq_{st}X_{n-1:n}$, provided $\psi/\psi'$ is increasing.
\end{theorem}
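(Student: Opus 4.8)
The plan is to mimic the structure of the proofs of Theorems \ref{th3.3} and \ref{th3.8}: for $x$ fixed, show that the distribution function of $X_{n-1:n}$, viewed as a function of the shape‑parameter vector, is decreasing in each coordinate and Schur‑convex on $\mathcal{E_+}$ (respectively $\mathcal{D_+}$), and then invoke the standard fact (Theorem $A.8$ of \cite{marshall2010}, together with Lemma \ref{lem2.1b} or Lemma \ref{lem2.1a}) that a function which is decreasing in each argument and Schur‑convex sends $\boldsymbol{a}\preceq^{w}\boldsymbol{b}$ to $h(\boldsymbol{a})\leq h(\boldsymbol{b})$. Concretely, I would fix $x>\lambda$, set $u=F_{b}\!\left(\frac{x-\lambda}{\theta}\right)\in(0,1)$ (which does not depend on $\boldsymbol{\alpha}$), and write
\begin{equation*}
\Phi_{3}(\boldsymbol{\alpha})=F_{X_{n-1:n}}(x)=\sum_{l=1}^{n}\psi\!\left(\sum_{k\neq l}^{n}\phi\big(u^{\alpha_{k}}\big)\right)-(n-1)\,\psi\!\left(\sum_{k=1}^{n}\phi\big(u^{\alpha_{k}}\big)\right).
\end{equation*}
Because $\boldsymbol{X}$ and $\boldsymbol{Y}$ share the same generator $\psi$, baseline $F_{b}$, location $\lambda\boldsymbol{1}_{n}$ and scale $\theta\boldsymbol{1}_{n}$, we also have $G_{Y_{n-1:n}}(x)=\Phi_{3}(\boldsymbol{\beta})$, so that $Y_{n-1:n}\leq_{st}X_{n-1:n}$ amounts to $\Phi_{3}(\boldsymbol{\beta})\geq\Phi_{3}(\boldsymbol{\alpha})$ whenever $\boldsymbol{\beta}\succeq^{w}\boldsymbol{\alpha}$; by the reduction just mentioned it is enough to prove that $\Phi_{3}$ is decreasing in each argument and Schur‑convex on $\mathcal{E_+}$ (respectively $\mathcal{D_+}$).

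The computation I would carry out is as follows. Putting $g_{i}=\phi(u^{\alpha_{i}})$, so that $\psi(g_{i})=u^{\alpha_{i}}$, implicit differentiation gives $\partial g_{i}/\partial\alpha_{i}=(\ln u)\,\psi(g_{i})/\psi'(g_{i})$, and hence, with $S=\sum_{k=1}^{n}g_{k}$ and $S_{l}=\sum_{k\neq l}g_{k}$,
\begin{equation*}
\frac{\partial\Phi_{3}(\boldsymbol{\alpha})}{\partial\alpha_{i}}=(\ln u)\,\frac{\psi(g_{i})}{\psi'(g_{i})}\left[\sum_{l\neq i}^{n}\psi'(S_{l})-(n-1)\psi'(S)\right]=(\ln u)\,\frac{\psi(g_{i})}{\psi'(g_{i})}\,A_{i},
\end{equation*}
say. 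Here $\ln u<0$, and $\psi(g_{i})/\psi'(g_{i})<0$ since $\psi>0$ and $\psi'<0$; moreover $A_{i}\leq 0$, because $S_{l}\leq S$ and $\psi'$ is nondecreasing (an $n$‑monotone generator with $n\geq 2$ is convex). Therefore $\partial\Phi_{3}/\partial\alpha_{i}\leq 0$, i.e.\ $\Phi_{3}$ is decreasing in each $\alpha_{i}$, and this costs no extra hypothesis.

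The heart of the argument — and the step I expect to be the main obstacle — is the Schur‑convexity, which by Lemma \ref{lem2.1b} reduces to showing that $\partial\Phi_{3}/\partial\alpha_{i}$ is increasing in the index on $\mathcal{E_+}$. Fix $1\leq i\leq j\leq n$ with $\boldsymbol{\alpha}\in\mathcal{E_+}$; then $\alpha_{i}\leq\alpha_{j}$ and, since $\phi$ is decreasing and $0<u<1$, also $g_{i}\leq g_{j}$. From $g_{i}\leq g_{j}$ we get $S_{i}=S-g_{i}\geq S-g_{j}=S_{j}$, so convexity of $\psi$ gives $A_{i}\leq A_{j}\leq 0$, while the hypothesis that $\psi/\psi'$ is increasing gives $\psi(g_{i})/\psi'(g_{i})\leq\psi(g_{j})/\psi'(g_{j})<0$. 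The delicate point is the sign bookkeeping: both $\psi(g_{i})/\psi'(g_{i})$ and $A_{i}$ are negative, each larger in absolute value than its $j$‑counterpart, so their product dominates, $\frac{\psi(g_{i})}{\psi'(g_{i})}A_{i}\geq\frac{\psi(g_{j})}{\psi'(g_{j})}A_{j}\geq 0$; multiplying by $\ln u<0$ yields $\partial\Phi_{3}/\partial\alpha_{i}-\partial\Phi_{3}/\partial\alpha_{j}\leq 0$, as required. The case $\boldsymbol{\alpha}\in\mathcal{D_+}$ is symmetric: there $g_{i}\geq g_{j}$ for $i\leq j$, all the pairwise inequalities reverse, and $\partial\Phi_{3}/\partial\alpha_{i}$ becomes decreasing in the index, which is exactly what Lemma \ref{lem2.1a} requires. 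Once monotonicity and Schur‑convexity of $\Phi_{3}$ are in hand, Theorem $A.8$ of \cite{marshall2010} delivers $\boldsymbol{\beta}\succeq^{w}\boldsymbol{\alpha}\Rightarrow\Phi_{3}(\boldsymbol{\beta})\geq\Phi_{3}(\boldsymbol{\alpha})$, i.e.\ $Y_{n-1:n}\leq_{st}X_{n-1:n}$. Beyond the sign arithmetic, the only point needing care is that $\psi$ be strictly decreasing where positive, so that $\psi'(g_{i})<0$ and the ratio $\psi/\psi'$ is well defined over the relevant range — which is part of the standing assumptions on Archimedean generators.
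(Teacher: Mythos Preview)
Your proposal is correct and follows exactly the pattern the paper uses in the analogous Theorems \ref{th3.3}, \ref{th3.8} and \ref{th3.10} (the paper itself omits the proof of this theorem, implicitly deferring to that same template). Your factorisation $\partial\Phi_{3}/\partial\alpha_{i}=(\ln u)\,[\psi(g_{i})/\psi'(g_{i})]\,A_{i}$ and the ensuing sign bookkeeping to obtain monotonicity and Schur-convexity are precisely the arguments the paper carries out (with $\varrho_{i1}$, $\varrho_{1}$ in place of your $A_{i}$, $\psi(g_{i})/\psi'(g_{i})$) in the proof of Theorem \ref{th3.8}.
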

\begin{remark}
	If we take $\boldsymbol{\lambda}=\boldsymbol{\mu}=\boldsymbol{0}$, $\boldsymbol{\alpha}=\boldsymbol{\lambda},~\boldsymbol{ \beta}=\boldsymbol{\mu}$ and $\boldsymbol{\theta}=\boldsymbol{\delta}=\boldsymbol{1}_n$, then Theorem \ref{th3.12.} reduces to Theorem $6.2$ of \cite{fang2016stochastic}.
	
\end{remark}
The following counterexample reveals that if $\boldsymbol{\alpha},~ \boldsymbol{\beta}\notin\mathcal{E_+}~(or~\mathcal{D_+})$, then the result in Theorem \ref{th3.12.} may not hold.
\begin{counterexample}\label{cex3.3}
	Let us consider two vectors $\boldsymbol{X}\sim \mathbb{ELS}(3,3,(2.5,10.5,3.1);1-exp(1-x^{0.5}),e^{-x})$ and
	$\boldsymbol{Y}\sim \mathbb{ELS}(3,3,(0.5,6.5,7.5);1-exp(1-x^{0.5}),e^{-x})$, where $x\geq 1.$ Clearly, all the conditions except the restrictions taken on the vectors of the parameters are satisfied. Now,
	we plot the graph of $[{F}_{X_{2:3}}(x)-{F}_{Y_{2:3}}(x)]$ in Figure $2b$. That shows that the usual stochastic order as in Theorem \ref{th3.12.} does not hold.
\end{counterexample}

\section{Conclusion}

This paper dealt with the ordering results between the second-largest order statistics, arising from two sets of exponentiated location-scale distributed data. The aim of the paper is two-fold. First, we considered the case of independent observations, and then dependent observations. The usual stochastic order and the reversed hazard rate order between the lifetimes of two $2$-out-of-$n$ systems have been obtained, when two sets of independent heterogeneous observations are available to us. We also considered two sets of heterogeneous dependent random observations. Usually, system components have dependent lifetimes due to the common environment. It  has been assumed that the dependence structure is coupled by the Archimedean copulas. For the case of dependent observations, we obtained the usual stochastic order between the time to failures of two $2$-out-of-$n$ systems. Various examples and counterexamples have been considered to illustrate the established results. The results established in this paper will be helpful to the reliability theorists and practitioners to find out a better $2$-out-of-$n$ system.
\section*{Acknowledgements} Sangita Das thanks the
MHRD, Government of India for financial support. Suchandan Kayal acknowledges the partial financial support for this work under a grant MTR/2018/000350, SERB, India.
\section*{Disclosure statement}
{Both the authors states that there is no conflict of interest.}


\end{document}